\documentclass[a4paper,11pt]{article}

\usepackage{amsmath, amssymb, color}
\usepackage{amsthm} 
 \usepackage{natbib}
\usepackage[colorlinks,citecolor=blue,urlcolor=blue]{hyperref}
\usepackage{xcolor,colortbl}

\RequirePackage{hypernat}
\usepackage{cleveref, enumitem}
\usepackage{color}
\usepackage{amsfonts, fancybox}
\usepackage{amssymb}
\usepackage[american]{babel}
\usepackage{psfrag,color}
\usepackage{dcolumn}
\usepackage[format=hang, justification=justified, singlelinecheck=false]{subcaption}
\usepackage{flafter, bm, dsfont}
\usepackage[section]{placeins}

\usepackage{multirow}
\usepackage{color}
\usepackage{amsmath}
\usepackage{float}
\usepackage{mathrsfs}
\usepackage{amsfonts}
\usepackage{dsfont}
\usepackage{amssymb}
\usepackage{algorithmic, algorithm}
\usepackage{wrapfig}

\usepackage{amssymb, latexsym}

\usepackage{graphicx}
%
\usepackage{libertine}
\usepackage[T1]{fontenc}
\usepackage[font={small}]{caption}

\newcommand{\eps}{\varepsilon}
\newcommand{\HIBM}{\mathcal{H}^{\textsc{ibm}}_{\alpha, \beta}}
\newcommand{\gCW}{g^{\textsc{cw}}}
\newcommand{\muCW}{\mu^{\textsc{CW}}}
\newcommand{\dhyp}{\{-1, 1\}^p}
\newcommand{\ubar}[1]{\underline{#1}}
\newcommand{\tr}{\mathbf{Tr}}

\numberwithin{equation}{section}

\newtheorem{definition}{Definition}[section]
\newtheorem{theorem}[definition]{Theorem}
\newtheorem{lemma}[definition]{Lemma}
\newtheorem{corollary}[definition]{Corollary}
\newtheorem{remark}[definition]{Remark}
\newtheorem{proposition}[definition]{Proposition}

\newcommand{\bi}{^{(t)}}
\newcommand{\DS}{\displaystyle}
\newcommand{\cA}{\mathcal{A}}
\newcommand{\cB}{\mathcal{B}}

\newcommand{\cE}{\mathcal{E}}

\newcommand{\cH}{\mathcal{H}}

\newcommand{\cM}{\mathcal{M}}
\newcommand{\cN}{\mathcal{N}}

\newcommand{\cP}{\mathcal{P}}

\newcommand{\cS}{\mathcal{S}}
\newcommand{\cT}{\mathcal{T}}

\newcommand{\cV}{\mathcal{V}}

\newcommand{\bone}{\mathbf{1}}
\newcommand{\bonep}{\mathbf{1}_{[p]}}
\newcommand{\KL}{\mathsf{KL}}

\newcommand{\R}{{\rm I}\kern-0.18em{\rm R}}
\newcommand{\h}{{\rm I}\kern-0.18em{\rm H}}
\newcommand{\K}{{\rm I}\kern-0.18em{\rm K}}
\newcommand{\p}{{\rm I}\kern-0.18em{\rm P}}
\newcommand{\E}{{\rm I}\kern-0.18em{\rm E}}
\newcommand{\Z}{{\rm Z}\kern-0.18em{\rm Z}}
\newcommand{\1}{{\rm 1}\kern-0.24em{\rm I}}
\newcommand{\N}{{\rm I}\kern-0.18em{\rm N}}

\definecolor{MIT}{RGB}{163,31,52}

\newcommand{\inp}[1]{\left(#1\right)}

\crefname{theorem}{Theorem}{Theorems}
\crefname{observation}{Observation}{Observations}
\crefname{claim}{Claim}{Claims}
\crefname{condition}{Condition}{Conditions}
\crefname{example}{Example}{Examples}
\crefname{fact}{Fact}{Facts}
\crefname{lemma}{Lemma}{Lemmas}
\crefname{corollary}{Corollary}{Corollaries}
\crefname{definition}{Definition}{Definitions}
\crefname{remark}{Remark}{Remarks}

\DeclareMathOperator{\Tr}{\mathbf{Tr}}
\DeclareMathOperator{\diag}{\mathbf{diag}}

\makeatletter
\newcommand*{\defeq}{\mathrel{\rlap{%
                     \raisebox{0.3ex}{$\m@th\cdot$}}%
                     \raisebox{-0.3ex}{$\m@th\cdot$}}%
                    =}
\newcommand*{\eqdef}{=
  \mathrel{\rlap{%
      \raisebox{0.3ex}{$\m@th\cdot$}}%
    \raisebox{-0.3ex}{$\m@th\cdot$}}%
}
\makeatother
\usepackage{comment}

\usepackage{fullpage}
\usepackage{geometry}

\begin{document}

\title{Exact recovery in the Ising Blockmodel}

\author{Quentin Berthet\thanks{Email:
    \texttt{\scriptsize q.berthet@statslab.cam.ac.uk}.  %
    University of Cambridge, UK.  Supported by an Isaac Newton Trust
    Early Career Support Scheme and by The Alan Turing Institute under
    the EPSRC grant EP/N510129/1.} \and %
  Philippe Rigollet\thanks{Email: \texttt{\scriptsize rigollet@math.mit.edu}.  %
    Massachusetts Institute of Technology, Cambridge, MA, USA.
    Supported by Grants NSF-DMS-1541099, NSF-DMS-1541100,
    DARPA-BAA-16-46 and a grant from the MIT NEC Corporation.} \and
  Piyush Srivastava\thanks{Email:
    \texttt{\scriptsize piyush.srivastava@tifr.res.in}. %
    Tata Institute of Fundamental Research, Mumbai, MH, India.  This
    work was performed while this author was supported by the United
    States NSF grant CCF-1319745 and was at the Center for the
    Mathematics of Information at the California Institute of
    Technology.}}

\date{}
\maketitle
\thispagestyle{empty}

\begin{abstract}\ We consider the problem associated to recovering the block structure of an Ising model
given independent observations on the binary hypercube. This new model, called the Ising blockmodel, is a perturbation of the mean field approximation of the Ising model known as the Curie--Weiss model: the sites are partitioned into two blocks of equal size and the
interaction between those of the same block is stronger than across blocks, to account for more order within each block. We study probabilistic, statistical and computational aspects of
this model in the high-dimensional case when the number of sites may be much larger
than the sample size.
\end{abstract}

\paragraph{AMS Subject classification.} Primary: 62H30.\ \  Secondary:
{82B20}.

\paragraph{Keywords.} Ising blockmodel, Curie-Weiss model, stochastic
blockmodel, planted partition, spectral partitioning

\newpage
\pagestyle{plain}
\setcounter{page}{1}

\section{Introduction}  \label{sec:intro}

The past decades have witnessed an explosion of the amount of data collected. Along with this expansion comes the promise of a better understanding of an observed phenomenon by extracting relevant information from this data. Larger datasets not only call for faster methods to process them but also lead us to completely rethink the way data should be modeled. Specifically, these new datasets arise as the agglomeration of a multitude of basic entities and, rather than their average behavior, most of the information is contained in their interactions.  \emph{Graphical models} (a.k.a Markov Random Fields) have proved to be a very useful tool to turn raw data into networks that are amenable to clustering or community detection. Specifically, given random variables $\sigma_1, \ldots, \sigma_p$, the goal is to output a graph on $p$ nodes, one for each variable, where the edges encode conditional independence between said variables \citep{Lau96}. Graphical models have been successfully employed in a variety of applications such as  image analysis~\citep{Bes86}, natural language processing \citep{ManSch99} and genetics~\citep{LauShe03,SebRamNol05} for example. 

Originally introduced in the context of statistical physics to explain the observed behavior of various magnetic materials~\citep{Isi25}, the Ising Model is a graphical model for binary random variables $\sigma_1, \ldots, \sigma_p \in \{-1, 1\}$, hereafter called \emph{spins}. Despite its simplicity, this model has been effective at capturing a large class of physical systems. More recently, this model was proposed to model social interactions such as political affinities, where $\sigma_j$ may represent the vote of U.S. senator $j$ on a random bill in~\cite{BanEl-dAs08} (see also the data used in~\cite{DiaGoeHol08} for the U.S. House of Representatives). In this context, much effort has been devoted to estimating the underlying structure of the graphical model~\citep{BreMosSly08, RavWaiLaf10, Bre15} under sparsity assumptions. At the same time, the theoretical side of social network analysis has witnessed a lot of activity around the estimation and reconstruction of stochastic blockmodels~\citep{HolLasLei83} as a simple but efficient way to capture the notion of \emph{communities} in social networks. These random graph models assume an underlying partition of the nodes, leading to inhomogeneous connection probabilities between nodes. Given the realization of such a graph, the goal is to recover the partition of the nodes. Already in the context of a balanced partition into two communities, this model has revealed interesting threshold phenomena~\citep{MosNeeSly15, MosNeeSly13, Mas14}. 

In this work, we combine the notions of stochastic blockmodel and that of graphical model by assuming that we observe independent copies of a vector $\sigma=(\sigma_1, \ldots, \sigma_p) \in \{-1,1\}^p$ distributed according to an Ising model with a block structure analogous to the one arising in the stochastic blockmodel.

Specifically, assume that $p\ge 2$ is an even integer and let $S \subset [p]:=\{1, \ldots, p\}$ be a subset of size $|S|=m=p/2$.  For any partition $(S,\bar S)$, where $\bar S=[p]\setminus S$ denotes the complement of $S$, write $i \sim j$ if $(i,j)  \in S^2 \cup \bar S^2$ and $i \nsim j$ if $(i,j) \in [p]^2\setminus (S^2 \cup \bar S^2)$. Fix $\beta,\alpha \in \R$ and let $\sigma \in \{-1, 1\}^p$ have density $f_{S,\alpha, \beta}$ with respect to the counting measure on $\{-1,1\}^p$ given by
\begin{equation}
\label{EQ:ising}
f_{S,\alpha,\beta}(\sigma)=\frac{1}{Z_{\alpha, \beta}}\exp\Big[\frac{\beta}{2p} \sum_{i\sim j} \sigma_i \sigma_j +\frac{\alpha}{2p} \sum_{i \nsim j} \sigma_i \sigma_j \Big]\,,
\end{equation}
where
\begin{equation}
\label{EQ:partition}
Z_{S,\alpha, \beta}:=\sum_{\sigma \in \{-1,1\}^p}\exp\Big[\frac{\beta}{2p} \sum_{i\sim j} \sigma_i \sigma_j +\frac{\alpha}{2p} \sum_{i \nsim j} \sigma_i \sigma_j \Big]\,
\end{equation}
is a normalizing constant traditionally called \emph{partition function}. Let $\p_{S,\alpha, \beta}$ denote the probability distribution over $\{-1,1\}^p$ that has density $f_{S,\alpha, \beta}$ with respect to the counting measure on $\{-1,1\}^p$. We call this model the \emph{Ising Blockmodel} (IBM). We write simply $f_{\alpha, \beta}$ and $\p_{\alpha,\beta}$ to emphasize the dependency on $\alpha,\beta$ and simply $\p_S$ to emphasize the dependency on $S$.

When $\alpha=\beta>0$, the model~\eqref{EQ:ising} is the mean field approximation of the (ferromagnetic) Ising model and is called the \emph{Curie-Weiss} model (without external field). It can be readily seen from~\eqref{EQ:ising} that vectors $\sigma \in \{-1,1\}^p$ that present a lot of pairs $(i,j)$ with opposite spins (high energy configurations), i.e., $\sigma_i\sigma_j<0$, receive less probability than vectors where most of the spins agree (low energy configurations). There are however much fewer vectors with low energy in the discrete hypercube and this tension between \emph{energy} and \emph{entropy} is responsible for phase transitions in such systems.

When positive, the parameter $\beta>0$ is called \emph{inverse temperature} and it controls the strength of interactions, and therefore, the weight given to the energy term. When $\beta  \to 0$, the entropy term dominates and $\p_{\beta,\beta}$ tends to the uniform density over $\{-1, 1\}^p$. When $\beta \to \infty$, $\p_{\beta, 0}\to .5\delta_{\bone}+.5\delta_{-\bone}$, where $\delta_x$ denotes the Dirac point mass at $x$ and $\bone=(1, \ldots, 1) \in \{-1,1\}^p$ denotes the all-ones vector of dimension $p$, the energy term dominates and it affects the global behavior of the system as follows. 

Let $\muCW=\sigma^\top \bone/p$ denote the \emph{magnetization} of $\sigma$. When $\muCW\simeq 0$, then $\sigma$ has a balanced numbers of positive and negative spins (paramagnetic behavior) and when $|\muCW| \gg 0$, then $\sigma$ has a large proportion of spins with a given sign (ferromagnetic behavior).  When $p$ is large enough, the Curie-Weiss model is known to obey a phase-transition from ferromagnetic to paramagnetic behavior when the temperature crosses a threshold (see subsection~\ref{SUB:CW} for details). 
This striking result indicates that when the temperature decreases ($\beta$ increases), the model changes from that of a disordered system (no preferred inclination towards $-1$ or $+1$) to that of an ordered system (a majority of the  spins agree to the same sign). This behavior is interesting in the context of modeling social interactions and indicates that if the strength of interactions is large enough ($\beta>1$) then a partial consensus may be found.
Formally, the Curie-Weiss model may also be defined in the anti-ferromagnetic case $\beta<0$---we abusively call it ``inverse temperature" in this case also---to model the fact that negative interactions are encouraged. For such choices of $\beta$, the distribution is concentrated around  balanced configurations $\sigma$  that have magnetization close to 0. Moreover, as $\beta \to -\infty$, $\p_{\beta,  \beta}$ converges to the uniform distribution on configurations with zero magnetization (assuming that $p$ is even so that such configurations exist for simplicity). As a result, the anti-ferromagnic case arises when no consensus may be found and and the spins are evenly split between positive and negative.

In reality though,  a collective behavior may be fragmented into communities and the IBM is meant to reflect this structure. Specifically, since $\beta >\alpha$, the strength $\beta$ of interactions within the blocks $S$ and $\bar S$ is larger than that across blocks $S$ and $\bar S$. As will become clear from our analysis, the case where $\alpha<0$ presents interesting configurations whereby the two blocs $S$ and $\bar S$ have polarized behaviors, that is opposite magnetization in each block. 

The rest of this paper is organized as follows. In Section~\ref{SEC:prob}, we study the probability distributions $\p_{\alpha, \beta}$, for $\alpha <\beta$ and exhibit phase transitions. Next, in Section~\ref{SEC:stat}, we consider the problem of recovering the partition $S,\bar S$ from $n$ iid samples from $\p_{\alpha, \beta}$.

Finally note that  the size $p$ of the system has to be large enough to observe interesting phenomena. In this paper we are also concerned with such high dimensional systems and our results will be valid for large enough $p$, potentially much larger than the number of observations. In particular, we often consider asymptotic statements as $p \to \infty$. However, in the statistical applications of Section~\ref{SEC:stat} we are interested in understanding the scaling of the number of observations as a function of $p$. To that end, we keep track of the first order terms in $p$ and only let higher order terms vanish when convenient.

\section{Probabilistic analysis of the Ising blockmodel}
\label{SEC:prob}

We will see in Section~\ref{SEC:stat} that given $\sigma^{(1)}, \ldots, \sigma^{(n)}$ that are independent copies of $\sigma \sim \p_{\alpha, \beta}$, the sample covariance matrix $\hat \Sigma$ defined by 
\begin{equation}
\label{EQ:defECM}
\hat \Sigma = \frac{1}{n}\sum_{t=1}^n \sigma\bi {\sigma\bi}^\top\,,
\end{equation}
is a sufficient statistic for $S$. From basic concentration results (see Section~\ref{SEC:stat}), it can be shown that this matrix concentrates around the true covariance matrix $\Sigma=\E_{\alpha, \beta} \big[\sigma \sigma^\top\big]$ where $\E_{\alpha, \beta}$ denotes the expectation associated to $\p_{\alpha, \beta}$. Unfortunately, computing $\Sigma$ directly is quite challenging. Instead, we show that when $p$ is large enough, then $\p_{\alpha, \beta}$ is spiked around specific values, which, in turn, give us a handle of quantities of the form $\E_{\alpha, \beta}[\varphi(\sigma)]$ for some test function $\varphi$. 
Beyond our statistical task, we show phase transitions that are interesting from a probabilistic point of view.

\subsection{Free energy}

Let $\HIBM$ denote the \emph{IBM Hamiltonian} (or ``energy") defined on $\{-1, 1\}^p$ by
\begin{equation}
\label{EQ:defH}
\HIBM(\sigma)=-\big(\frac{\beta}{2p} \sum_{i\sim j} \sigma_i \sigma_j +\frac{\alpha}{2p} \sum_{i \nsim j} \sigma_i \sigma_j \big)\,,
\end{equation}
so that
$$
f_{\alpha, \beta}(\sigma)=\frac{e^{-\HIBM(\sigma)}}{Z_{\alpha, \beta}}
$$
Akin to the Curie-Weiss model, the density $f_{\alpha, \beta}$ puts uniform weights on configurations that have the same magnetization structure. To make this statement precise, for any $A \subset [p]$ define $\bone_A \in \{0,1\}^p$ to be the indicator vector of $A$ and let $\mu_A=\sigma^\top \bone_A/|A|$ denote the \emph{local magnetization} of $\sigma$ on the set $A$. It follows from elementary computations that
\begin{equation}
\label{EQ:defH2}
\HIBM(\sigma)=-\frac{m}{4}\Big(2\alpha \mu_S\mu_{\bar S} + \beta  (\mu_S^2 +\mu_{\bar S}^2  )\Big)\,,
\end{equation}
where we recall that $m=p/2$.  Moreover, the number of configurations $\sigma$ with local magnetizations   $\mu=(\mu_S, \mu_{\bar S}) \in [-1,1]^2$ is given by 
$$
{m \choose \frac{\mu_S+1}{2}m}{m \choose \frac{\mu_{\bar S}+1}{2}m}
$$
This quantity can be approximated using Stirling's formula (see  \Cref{lem:binom-stirling}): For any $\mu \in (1+\eps,1-\eps)$, there exists two positive constants $\ubar{c}, \bar c$ such that  
$$
\frac{\ubar{c}}{\sqrt{m}}e^{-mh\big(\frac{\mu+1}{2}\big)} \le {m \choose \frac{\mu+1}{2}m}\le \frac{\bar{c}}{\sqrt{m}}e^{mh\big(\frac{\mu+1}{2}\big)}\,, \qquad \forall \, m \ge 1
$$
where $h: [0,1] \to \R$ is the binary entropy function defined by $h(0)=h(1)=1$ and for any $s \in (0,1)$ by
$$
h(s)=-s\log(s) - (1-s)\log(1-s)\,.
$$
Thus, IBM induces a marginal distribution on the local magnetizations that has density 
\begin{equation}
\label{EQ:densitymag}
\frac{\ell_m}{mZ_{\alpha, \beta}}\exp\Big[-\frac{m}{4}g(\mu_S, \mu_{\bar S})  \Big]\,,
\end{equation}
where $\ubar{c}^2\le \ell_m \le \bar{c}^2$ and
\begin{equation}
\label{EQ:defG}
 g(\mu_S, \mu_{\bar S})=-2\alpha \mu_S\mu_{\bar S} - \beta (\mu_S^2 +\mu_{\bar S}^2 )-4h\big(\frac{\mu_S+1}{2}\big)-4h\big(\frac{\mu_{\bar S}+1}{2}\big)\,.
\end{equation}
Note that the support of this density is implicitly the set of possible values for pairs local magnetizations of vectors in $\dhyp$, that is the set $\cM^2$, where
\begin{equation}
\label{EQ:defM}
\cM^2:=\big\{\frac{s^\top\bone_{[m]}}{m}, s \in \{-1,1\}^m\big\} \subset [-1,1]^2\,.
\end{equation}
We call the function $g$  the \emph{free energy} of the Ising blockmodel and its structure of minima is known to control the behavior of the system. Indeed,  $g^*$ denote the minimum value of $g$ over $\cM^2$. It follows from~\eqref{EQ:densitymag} that any local magnetization $(\mu_S, \mu_{\bar S})\in \cM^2$ such that $g(\mu_S, \mu_{\bar S}) > g^*$ has a probability exponentially smaller than any magnetization that minimizes $g$ over $\cM^2$. Intuitively, this results in a distribution that is concentrated around its modes. Before quantifying this effect, we study the minima, known as \emph{ground states} of the free energy  $g$, when defined over the continuum $[-1,1]^2$. %

\subsection{Ground states}
\label{sub:ground}

Recall that when $\alpha=\beta$, the block structure vanishes and the IBM reduces to the well-known Curie-Weiss model. We gather in \Cref{SUB:CW} useful facts about the Curie-Weiss model that we use in the rest of this section.

The following proposition characterizes the ground states of the Ising blockmodel. For any $p \in [1, \infty]$, we denote by $\|\cdot\|_p$  the $\ell_p$ norm of $\R^2$ and by $\cB_p=\{x \in \R^2,:\, \|x\|_p\le 1\}$ the unit ball with respect to that norm.

\begin{proposition}
\label{LEM:study_g}
For any $b \in \R$, let $\pm \tilde{x}(b) \in (-1,1), \tilde{x}(b)\ge 0$ denote the ground state(s) of the Curie-Weiss model with inverse temperature $b$.
The free energy $g_{\alpha, \beta}$ of the IBM defined in~\eqref{EQ:defG} has the following  minima:

\noindent If $\beta+|\alpha| \le 2$, then $g_{\alpha, \beta}$ has a unique  minimum at $(0,0)$. \\
\noindent If $\beta +|\alpha|> 2$, then  three cases arise:
\begin{enumerate}
\item If $\alpha=0$, then $g_{\alpha, \beta}$ has four  minima at $(\pm \tilde{x}(\beta/2),\pm \tilde{x}(\beta/2))$,
\item If $\alpha>0$, $g_{\alpha, \beta}$ has two  minima at $\tilde s	= ( \tilde{x}(\frac{\beta+\alpha}{2}), \tilde{x}(\frac{\beta+\alpha}{2}))$ and $-\tilde s$,
\item If $\alpha<0$, $g_{\alpha, \beta}$ has two  minima at $\tilde s = ( \tilde{x}(\frac{\beta-\alpha}{2}), -\tilde{x}(\frac{\beta-\alpha}{2}))$ and $-\tilde s$.
\end{enumerate}
In particular, for all  values of the parameters $\alpha$ and $\beta$, all ground states $(\tilde x, \tilde y)$ satisfy $\tilde x^2=\tilde y^2<1$.

\end{proposition}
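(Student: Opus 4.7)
The plan is to exploit the symmetries $g(x,y)=g(y,x)=g(-x,-y)$ and reduce the analysis to the one-dimensional Curie--Weiss variational problem recalled in \Cref{SUB:CW}. Using $h'(s)=\log\frac{1-s}{s}$, the gradient of $g$ on $(-1,1)^2$ is
\begin{align*}
\partial_x g(x,y) &= -2\beta x-2\alpha y+4\operatorname{arctanh}(x),\\
\partial_y g(x,y) &= -2\alpha x-2\beta y+4\operatorname{arctanh}(y),
\end{align*}
so critical points solve the coupled fixed-point system $x=\tanh((\beta x+\alpha y)/2)$ and $y=\tanh((\alpha x+\beta y)/2)$. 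The $\operatorname{arctanh}$ terms force the gradient to diverge at the boundary of $[-1,1]^2$, so no local minimum lies on the boundary and every global minimum is an interior critical point. For the sub-threshold case $\beta+|\alpha|\le 2$, I would show $(0,0)$ is the unique critical point by a short multiply-and-sum argument: multiplying the equations $2\operatorname{arctanh}(x)=\beta x+\alpha y$ and $2\operatorname{arctanh}(y)=\alpha x+\beta y$ by $x$ and $y$ respectively and summing gives
$$
2x\operatorname{arctanh}(x)+2y\operatorname{arctanh}(y)=\beta(x^2+y^2)+2\alpha xy\le(\beta+|\alpha|)(x^2+y^2)\le 2(x^2+y^2).
$$
Since $t\operatorname{arctanh}(t)\ge t^2$ on $(-1,1)$ with equality iff $t=0$, equality must hold throughout, forcing $x=y=0$.

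For $\beta+|\alpha|>2$, I would restrict $g$ to the two invariant axes $\{y=x\}$ and $\{y=-x\}$, which are the fixed-point sets of the involutions $(x,y)\mapsto(y,x)$ and $(x,y)\mapsto(-y,-x)$. Using $h(s)=h(1-s)$,
\begin{align*}
g(x,x)&=-2(\alpha+\beta)x^2-8h\!\left(\tfrac{x+1}{2}\right),\\
g(x,-x)&=-2(\beta-\alpha)x^2-8h\!\left(\tfrac{x+1}{2}\right),
\end{align*}
which up to a common positive multiplicative factor are Curie--Weiss free energies at effective inverse temperatures $(\alpha+\beta)/2$ and $(\beta-\alpha)/2$ respectively. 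The results of \Cref{SUB:CW} then supply nontrivial diagonal minimizers $\pm(\tilde x((\alpha+\beta)/2),\tilde x((\alpha+\beta)/2))$ exactly when $\alpha+\beta>2$ and nontrivial anti-diagonal minimizers $\pm(\tilde x((\beta-\alpha)/2),-\tilde x((\beta-\alpha)/2))$ exactly when $\beta-\alpha>2$; since $\tilde x(b)=\tanh(b\tilde x(b))\in(-1,1)$ strictly in both cases, every listed candidate automatically satisfies $\tilde x^2=\tilde y^2<1$.

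The main obstacle is then twofold: selecting between the two axes when both are active, and ruling out asymmetric global minima. For selection I would invoke the envelope theorem on the one-dimensional Curie--Weiss minimum value $b\mapsto\phi^*(b)$; its $b$-derivative equals $-\tilde x(b)^2<0$ for $b>1$, so $\phi^*$ is strictly decreasing, and the diagonal strictly beats the anti-diagonal iff $(\alpha+\beta)/2>(\beta-\alpha)/2$, i.e., iff $\alpha>0$; for $\alpha<0$ the inequality reverses; for $\alpha=0$ the two axes tie, the system decouples, and the four announced minimizers are exactly the products of the one-dimensional Curie--Weiss minima. To exclude off-axis critical points, subtracting the two fixed-point equations gives $2(\operatorname{arctanh}(x)-\operatorname{arctanh}(y))=(\beta-\alpha)(x-y)$; since $F(t):=2\operatorname{arctanh}(t)$ satisfies $F'(t)\ge 2$ with equality only at $t=0$, the mean-value theorem forces $x=y$ whenever $\beta-\alpha<2$, and adding the equations (using oddness of $\operatorname{arctanh}$) forces $x=-y$ whenever $\alpha+\beta<2$. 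The only remaining regime is $\alpha+\beta>2$ and $\beta-\alpha>2$ simultaneously; there I would compute the Hessian at any putative asymmetric critical point, use the critical-point equations to rewrite $\tfrac{4}{1-x^2}$ and $\tfrac{4}{1-y^2}$ in terms of $x,y$, and verify that it has a strictly negative eigenvalue, so such points are saddles and cannot be global minima.
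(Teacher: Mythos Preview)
Your approach differs substantially from the paper's, and your sub-threshold argument via $t\operatorname{arctanh}(t)\ge t^2$ is a pleasant alternative. However, the paper's entire proof rests on a single algebraic identity you do not use:
\[
g_{\alpha,\beta}(x,y)=\gCW_{\frac{\beta+\alpha}{2}}(x)+\gCW_{\frac{\beta+\alpha}{2}}(y)+\alpha(x-y)^2.
\]
For $\alpha>0$ this gives the pointwise lower bound $g(x,y)\ge \gCW_{(\beta+\alpha)/2}(x)+\gCW_{(\beta+\alpha)/2}(y)$ with equality iff $x=y$; the right-hand side is separable and its minimizers are read off directly from the one-dimensional Curie--Weiss theory. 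For $\alpha<0$ one combines the identity with $(x-y)^2\le 2x^2+2y^2$ (equality iff $x=-y$) to obtain the analogous bound with $\gCW_{(\beta-\alpha)/2}$. No critical-point calculus, mean-value theorem, envelope theorem, or second-order analysis is needed: the decomposition simultaneously locates the minimizers and excludes everything else in one stroke.

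Your route through the stationarity system is largely sound but has a genuine gap in the regime where both $\beta+\alpha>2$ and $\beta-\alpha>2$. There you assert that every off-axis critical point has a Hessian with a strictly negative eigenvalue, but you do not prove it, and it is not obvious: the critical-point equations do not put $\tfrac{4}{1-x^2}$ and $\tfrac{4}{1-y^2}$ into a form that makes the sign of $\det H=\bigl(-2\beta+\tfrac{4}{1-x^2}\bigr)\bigl(-2\beta+\tfrac{4}{1-y^2}\bigr)-4\alpha^2$ transparent. For small $|\alpha|$ the off-axis critical points are perturbations of the decoupled saddles $(0,\pm\tilde x(\beta/2))$ and $(\pm\tilde x(\beta/2),0)$ and remain saddles by continuity, but extending this to the full parameter range requires an argument your proposal does not supply. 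The paper's decomposition bypasses the difficulty entirely: for $\alpha>0$, any $(x,y)$ with $x\ne y$ satisfies $g(x,y)>\gCW_{(\beta+\alpha)/2}(x)+\gCW_{(\beta+\alpha)/2}(y)\ge 2\min\gCW_{(\beta+\alpha)/2}=g(\tilde s)$, so off-diagonal points are never global minima regardless of their local type, and symmetrically for $\alpha<0$.
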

This result is illustrated in Figure~\ref{FIG:contour}, composed of contour plots of the free energy $g_{\alpha,\beta}$ on the square $[-1,1]^2$, for several values of the parameters. The different regions are also represented in Figure~\ref{FIG:phases} below.

\begin{figure}[h!]
    \begin{center} 
    \begin{tabular}{cccc}
    	\includegraphics[width=.22 \textwidth]{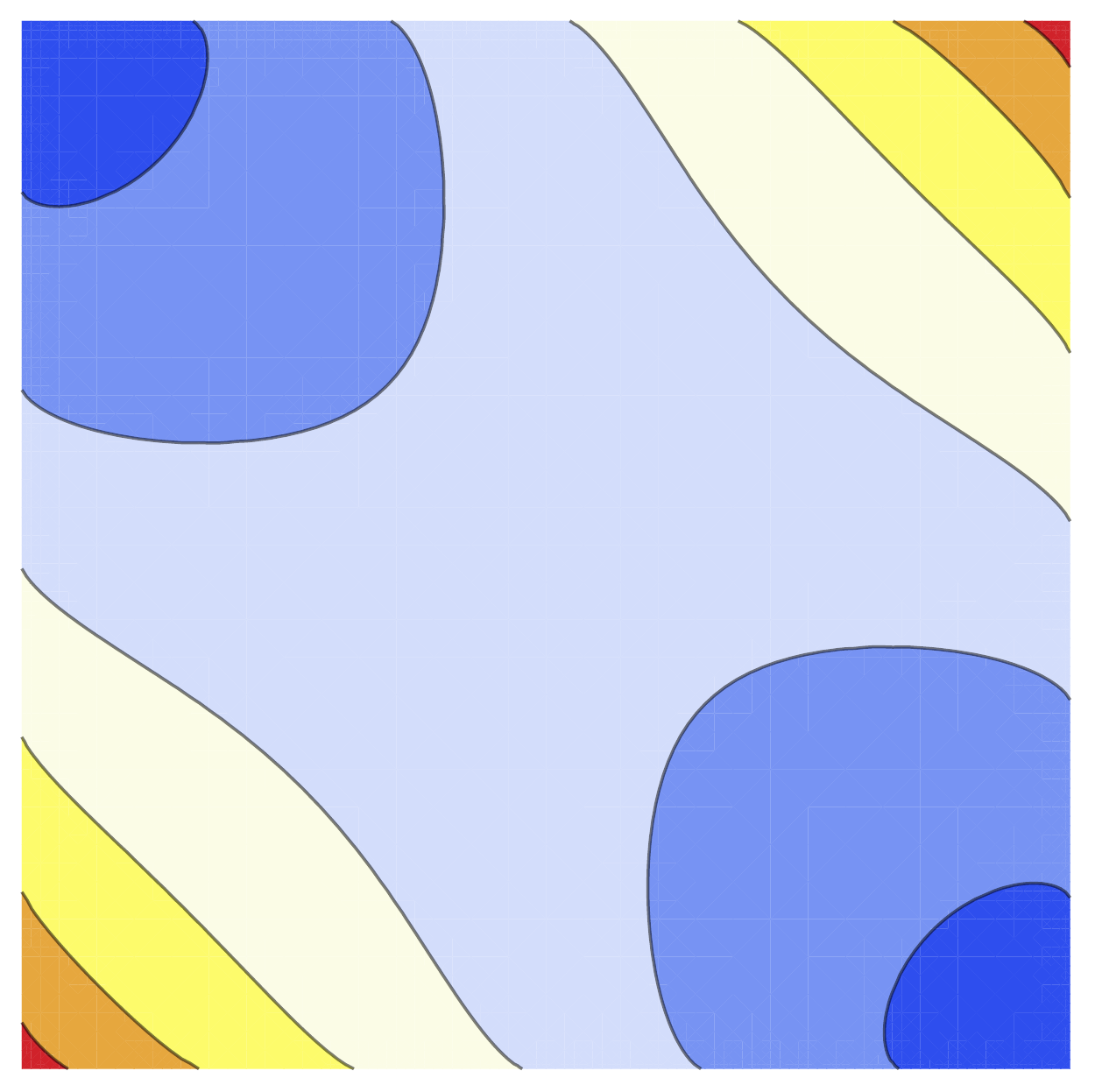}&
    	\includegraphics[width=.22 \textwidth]{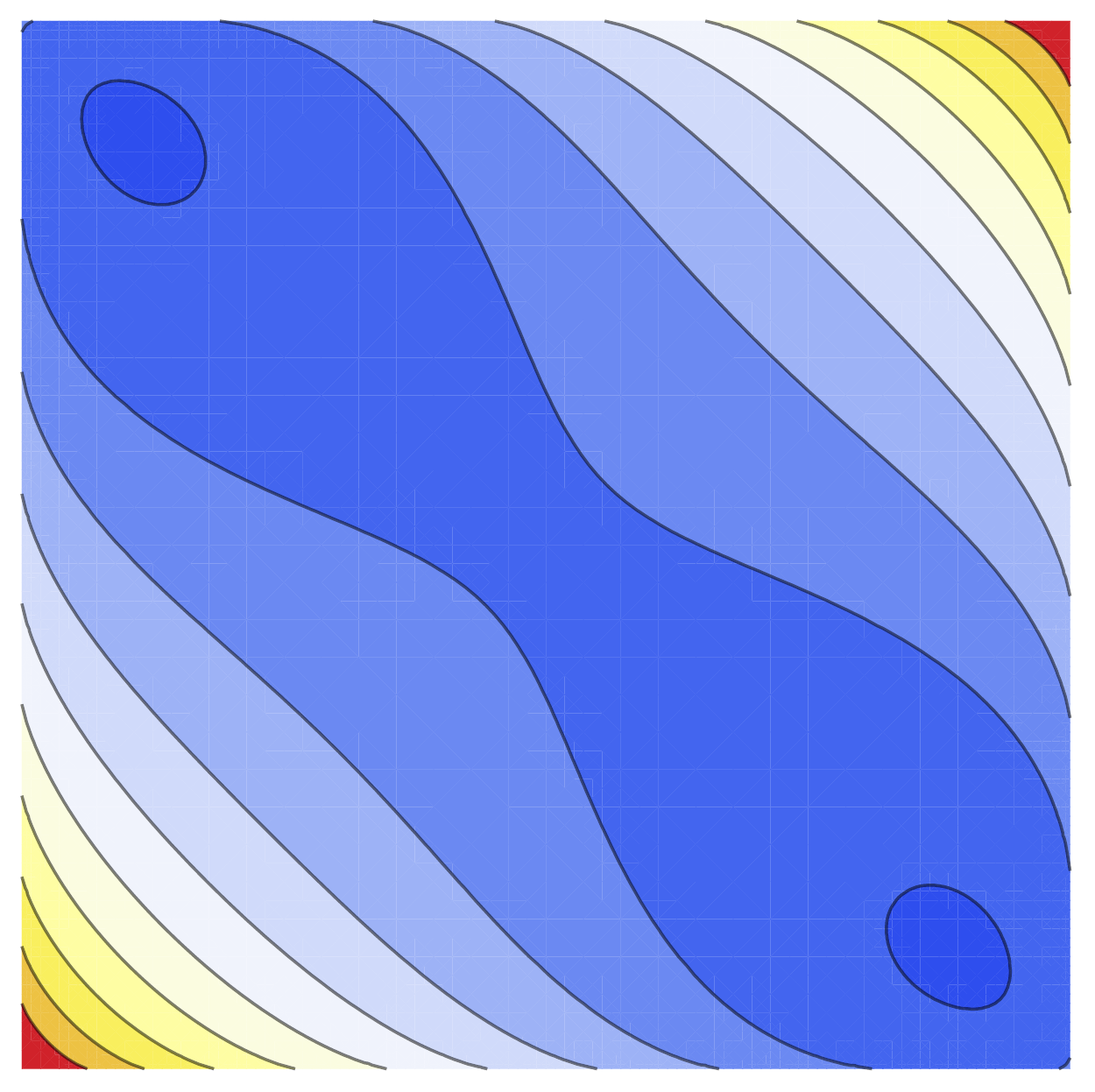}&      
	\includegraphics[width=.22 \textwidth]{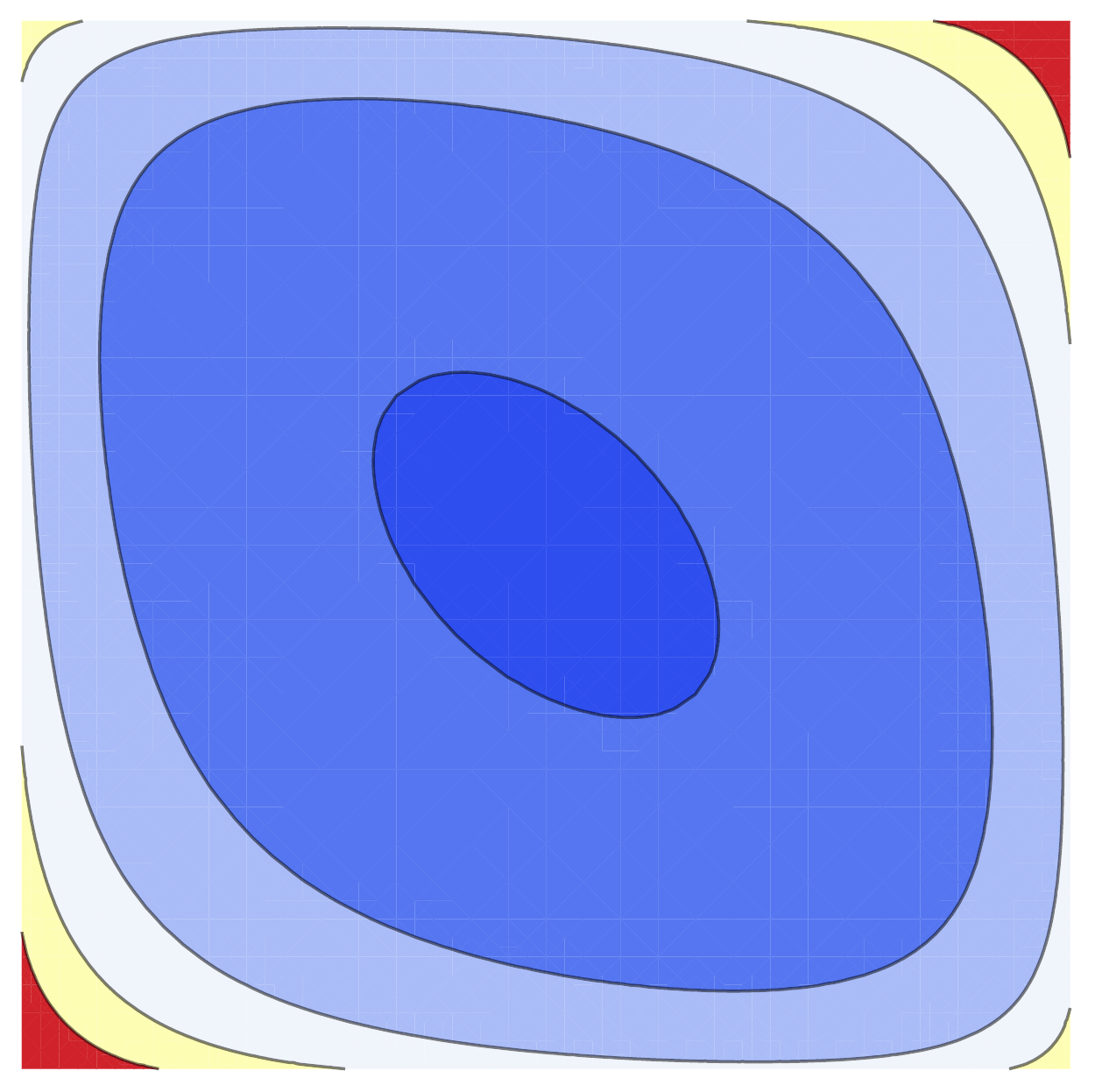}&
	\includegraphics[width=.22 \textwidth]{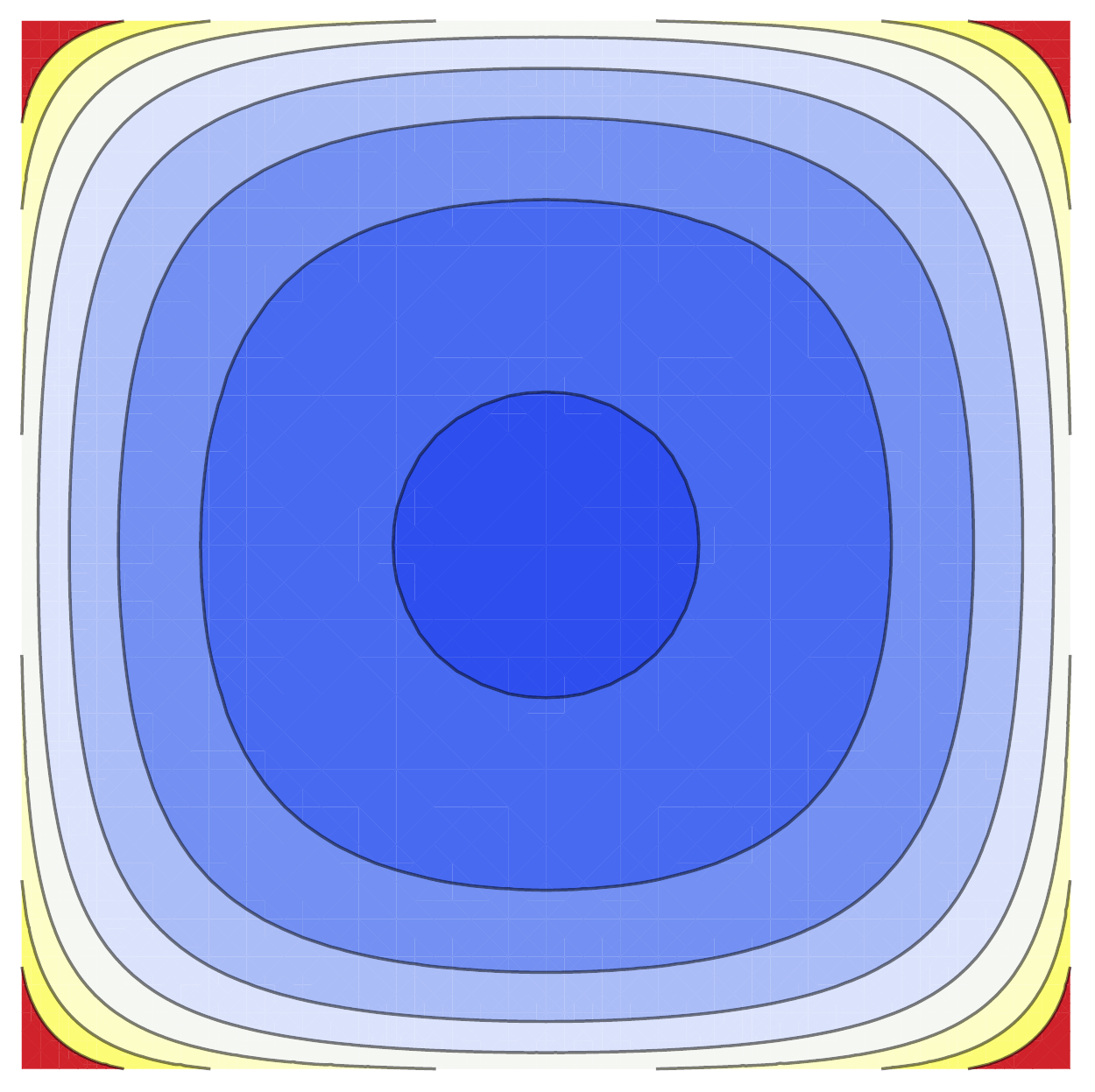}   \\
	$\alpha=-6$&$\alpha=-2.5$&$\alpha=-0.5$&$\alpha=0$\\
	\phantom{bouh!}\\
	\includegraphics[width=.22 \textwidth]{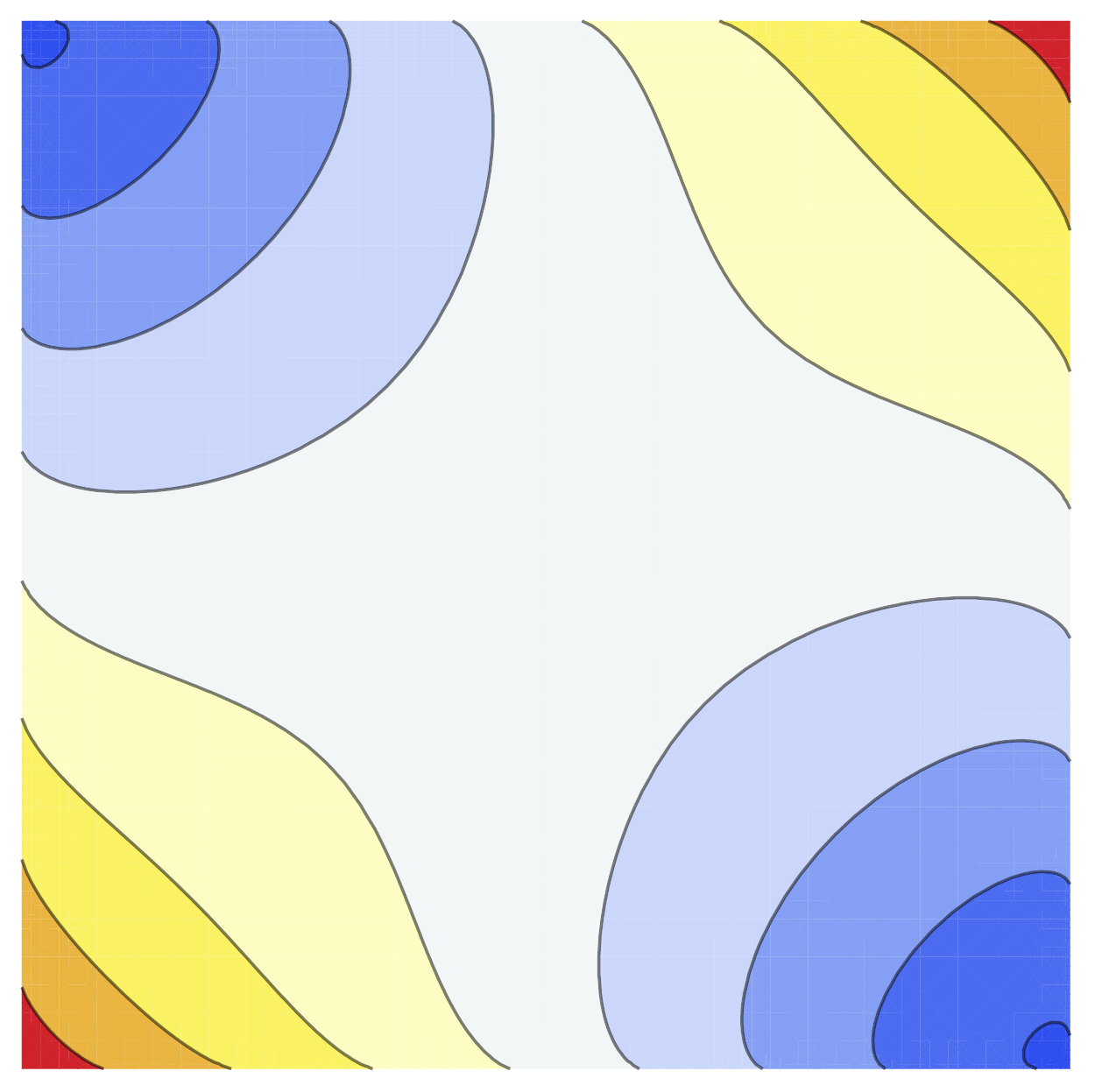}&
    	\includegraphics[width=.22 \textwidth]{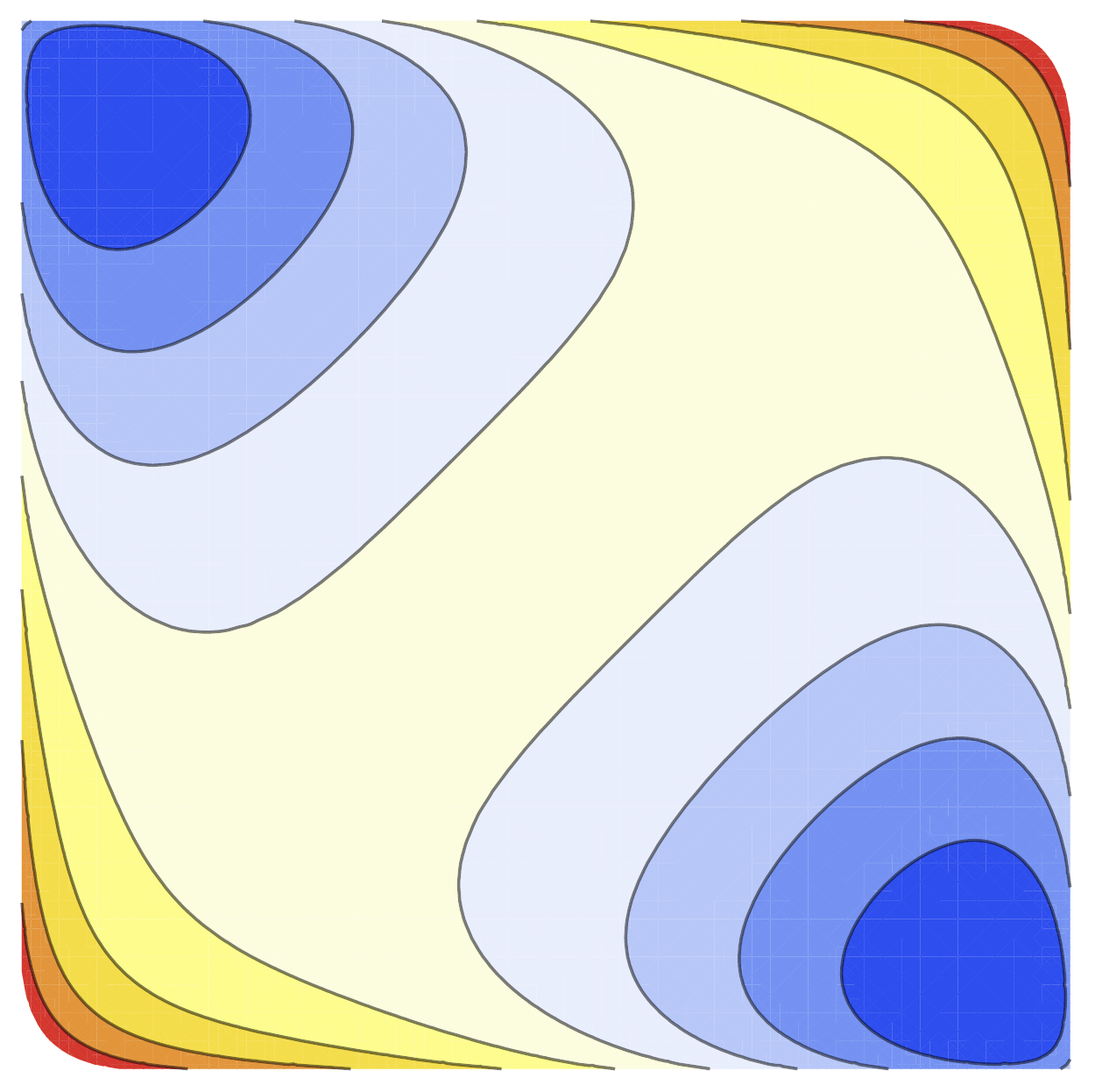}&      
	\includegraphics[width=.22 \textwidth]{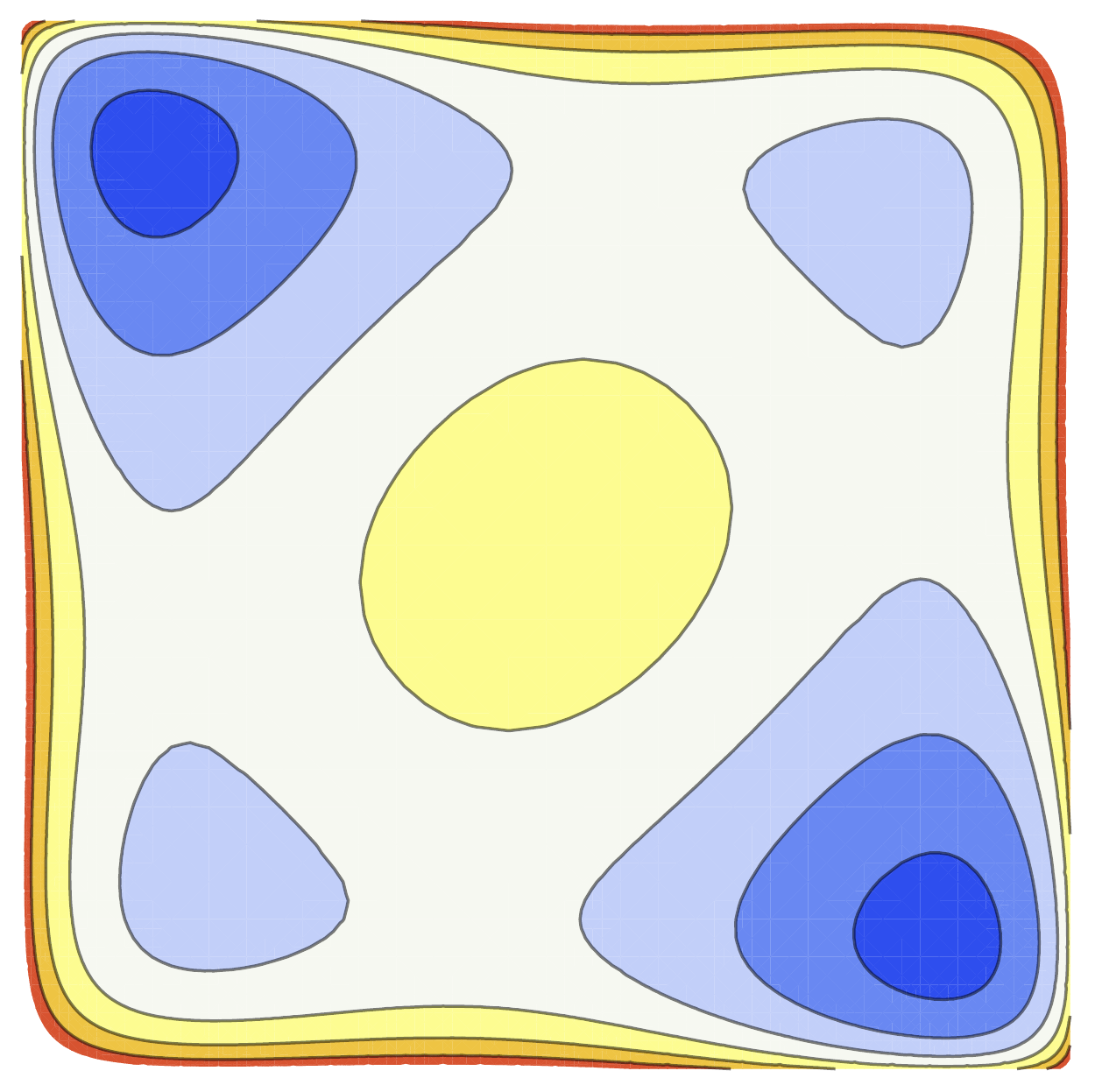}&
	\includegraphics[width=.22 \textwidth]{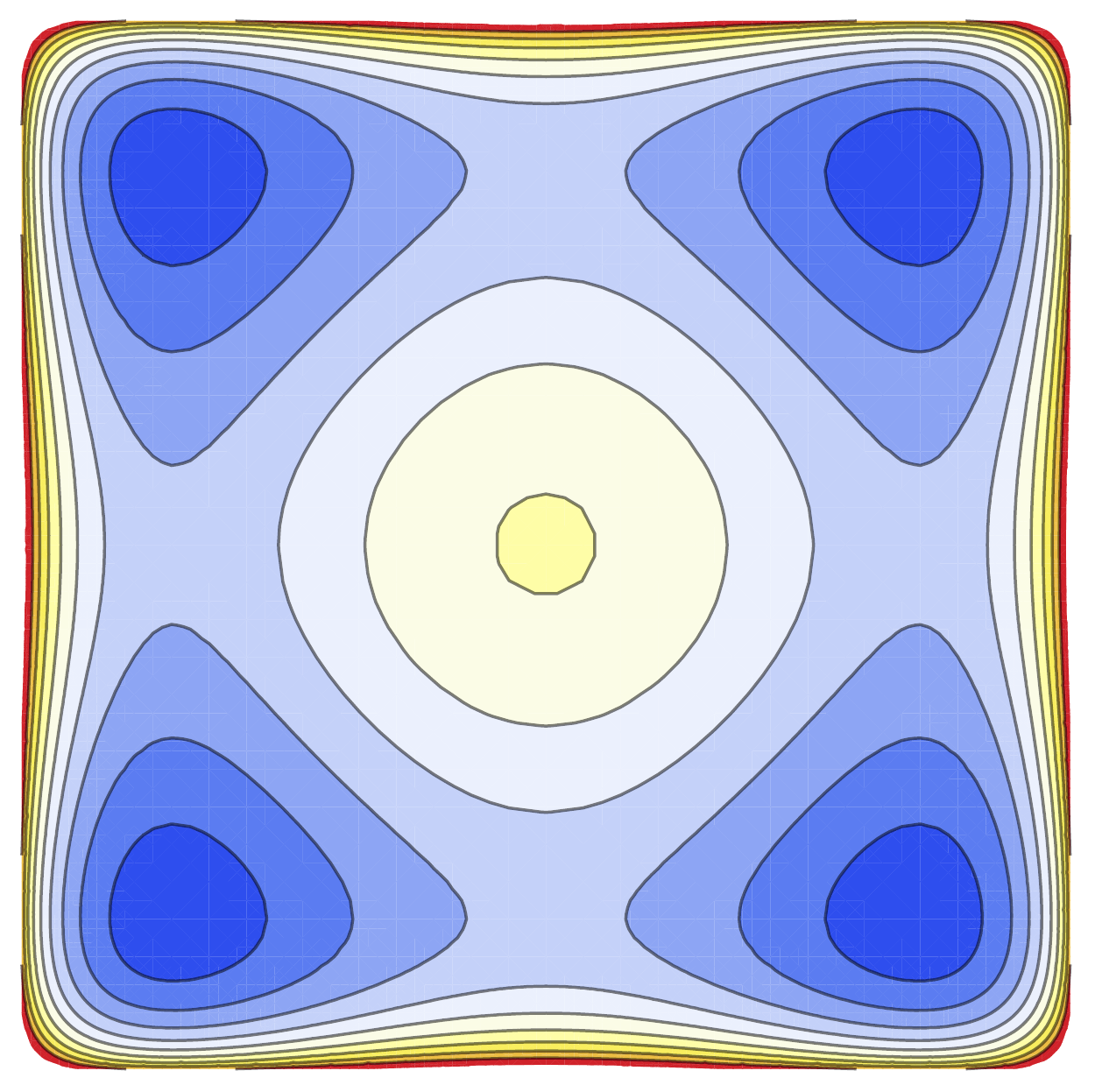}   \\
	$\alpha=-4$&$\alpha=-0.9$&$\alpha=-0.2$&$\alpha=0$
    \end{tabular}
    \end{center} 
    \caption{Contour plots of the values of the free energy $g_{\alpha,\beta}$ with higher values in red and lower values in blue, corresponding to ground states. {\em Top row} : Several choices for $\alpha<0$, and $\beta=1.5<2$. {\em Bottom row} : Several choices for $\alpha<0$, and $\beta=2.5>2$. The same plots with $\alpha>0$ can be obtained by a $90^\circ$ rotation, by symmetry of the function.}
    \label{FIG:contour}
\end{figure}

\begin{proof}
Throughout this proof, for any $b \in \R$, we denote by $\gCW_{b}(x), x \in [-1,1]$, the free energy of the Curie-Weiss model with inverse temperature $b$. We write $g:=g_{\alpha, \beta}$ for simplicity to denote the free energy of the IBM.%

Note that
\begin{equation}
\label{EQ:freeNRJ}
g(x,y)=\gCW_{\frac{\beta+\alpha}{2}}(x) + \gCW_{\frac{\beta+\alpha}{2}}(y)+\alpha(x-y)^2\,.
\end{equation}
We split our analysis according to the sign of $\alpha$. Note first that if $\alpha=0$, we have
$$
g(x,y)=\gCW_{\frac{\beta}{2}}(x) + \gCW_{\frac{\beta}{2}}(y)\,.
$$
It yields that:
\begin{itemize}[leftmargin=*]
\item  If $\beta \le 2$, then $\gCW_{\frac{\beta}{2}}$ has a unique local minimum at $x=0$ which implies that $g$ has a unique minimum at $(0,0)$
\item If $\beta > 2$, then  $\gCW_{\frac{\beta}{2}}$ has exactly two   minima at $\tilde{x}(\beta/2)$ and $-\tilde{x}(\beta/2)$, where $\tilde{x}(\beta/2) \in (-1,1)$. It implies that $g$ has four  minima at $(\pm \tilde{x}(\beta/2),\pm \tilde{x}(\beta/2))$.
\end{itemize}

Next, if $\alpha > 0$, in view of~\eqref{EQ:freeNRJ}
we have
$$
g(x,y)\ge \gCW_{\frac{\beta+\alpha}{2}}(x) + \gCW_{\frac{\beta+\alpha}{2}}(y)
$$
with equality iff $x=y$. It follows that:
\begin{itemize}[leftmargin=*]
\item  If $\alpha+\beta \le 2$, then  $g$ has a unique minimum at $(0,0)$
\item If $\alpha + \beta > 2$, then $g$ has two  minima on $\cA$ at $( \tilde{x}(\frac{\beta+\alpha}{2}), \tilde{x}(\frac{\beta+\alpha}{2}))$ and at $( -\tilde{x}(\frac{\beta+\alpha}{2}), -\tilde{x}(\frac{\beta+\alpha}{2}))$.
\end{itemize}

Finally, note that $(x-y)^2\le 2x^2 + 2y^2$ with equality iff $x=-y$. Thus, if $\alpha<0$, in view of~\eqref{EQ:freeNRJ} we have
\begin{equation}
\label{EQ:freeNRJalphaneg}
g(x,y)\ge \gCW_{\frac{\beta-\alpha}{2}}(x) + \gCW_{\frac{\beta-\alpha}{2}}(y)
\end{equation}

with equality iff $x=-y$. It implies that
\begin{itemize}[leftmargin=*]
\item If $\beta-\alpha \le 2$, then  $g$ has a unique minimum at $(0,0)$
\item If $\beta-\alpha > 2$, then $g$ has two  minima  at $( \tilde{x}(\frac{\beta-\alpha}{2}), -\tilde{x}(\frac{\beta-\alpha}{2}))$ and at\\ $( -\tilde{x}(\frac{\beta-\alpha}{2}), \tilde{x}(\frac{\beta-\alpha}{2}))$.
\end{itemize}
\end{proof}
\vskip -1cm
\begin{figure}[hp]
\includegraphics[width=\textwidth]{./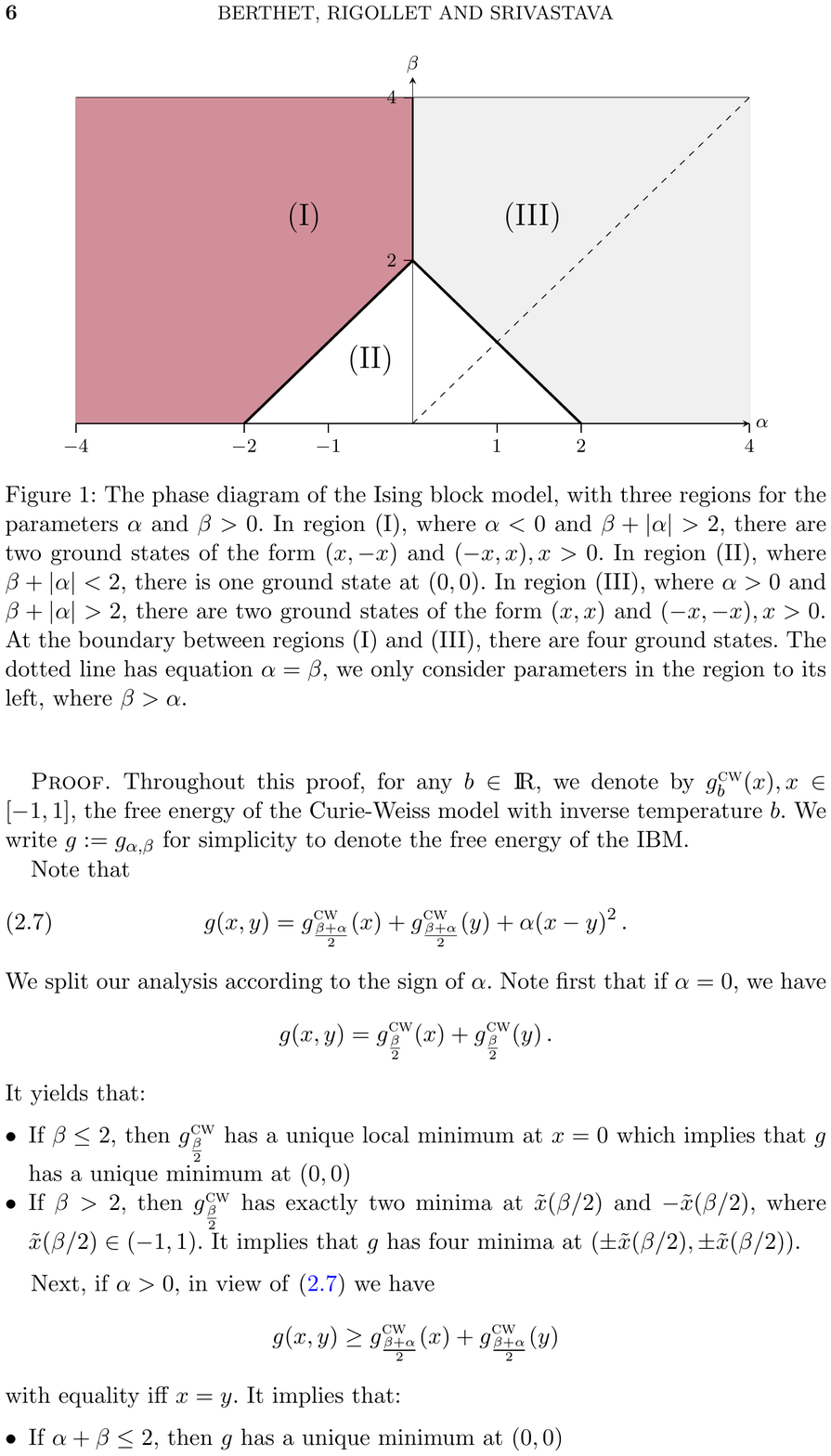}
\caption{Phase diagram of the Ising block model, with three regions for $\alpha$ and $\beta>0$. In region (I), where $\alpha<0$ and $\beta+|\alpha|>2$, there are two ground states of the form $(x, -x)$ and $(-x,x)$. In region (II), where $\beta+|\alpha|<2$, there is one ground state at $(0,0)$. In region (III), where $\alpha>0$ and $\beta+|\alpha|>2$, there are two ground states of the form $(x, x)$ and $(-x,-x)$. %
The dotted line has equation $\alpha=\beta$, we only consider parameters in the region to its left.
}

\label{FIG:phases}
\end{figure}

Using the localization of the ground states from \Cref{LEM:CW}, we also get the following  local and global behaviors of the free energy of the IBM around the ground states.
\begin{lemma}
\label{LEM:study_g2}
Assume that $\beta + |\alpha|\neq 2$. Denote by $(\tilde x, \tilde y)$ any ground state of Ising blockmodel and recall that $\tilde x^2=\tilde y^2$. Then the following holds:
\begin{enumerate}
\item The Hessian $H_{\alpha, \beta}$ of $g_{\alpha, \beta}$ at $(\tilde x, \tilde y)$ is given by
$$
H_{\alpha, \beta}=-2\left(\begin{array}{cc}\beta & \alpha \\\alpha & \beta \end{array}\right) + \frac{4}{1-\tilde x^2} I_2\,.
$$
In particular $H_{\alpha, \beta}$ has eigenvalues $2(\alpha - \beta) +4/(1-\tilde x^2)$ and
 
 $-2(\alpha + \beta) +4/(1-\tilde x^2)$ associated with eigenvectors $(1,-1)$ and $(1,1)$ respectively.\\

\item There exists positive constants $\delta=\delta(\beta+|\alpha|)$, $\kappa^2=\kappa^2(\beta+|\alpha|)$ such that the following holds. For any $(x,y) \in (-1,1)^2$, we have
\begin{equation}
\label{EQ:quadrlb}
g(x,y) \ge g(\tilde x, \tilde y) + \frac{\kappa^2}{2}\Big( \|(x,y)-(\tilde x, \tilde y)\|_\infty\wedge \delta\Big)^2\,.\end{equation}
Moreover, 
 
\noindent If $\beta+|\alpha|>2$, we can take $\DS \delta=e^{-(\beta+|\alpha|)}\frac{\beta + |\alpha|-2}{4(\beta + |\alpha|)}$ and 

$\DS  \kappa^2=1-\frac2{\beta +|\alpha|}$.

\noindent If $\beta+|\alpha|<2$, we can take $\DS \delta=\sqrt{(2-(\beta+|\alpha|))/6}$ and 

$\DS  \kappa^2=2-(\beta+|\alpha|)$.

\end{enumerate}
\end{lemma}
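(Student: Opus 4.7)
For part 1, my approach is a direct second-derivative calculation. Writing $g$ in the form of~\eqref{EQ:defG} and using the standard identities $h'(s)=\log((1-s)/s)$ and $h''(s)=-1/(s(1-s))$, a short calculation gives $\partial_{xx}g=-2\beta+4/(1-x^2)$, $\partial_{yy}g=-2\beta+4/(1-y^2)$, and $\partial_{xy}g=-2\alpha$; the announced matrix form for $H_{\alpha,\beta}$ follows at once at $(\tilde x,\tilde y)$ since $\tilde x^2=\tilde y^2$ by \Cref{LEM:study_g}. The spectrum is read off by observing that $\left(\begin{smallmatrix}\beta & \alpha \\ \alpha & \beta\end{smallmatrix}\right)$ is diagonalised in the basis $(1,1),(1,-1)$ with eigenvalues $\beta\pm\alpha$, and the scalar shift $4/(1-\tilde x^2)\,I_2$ preserves these eigendirections, producing exactly the values announced.

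For part 2, I would first bound the minimum eigenvalue of $H_{\alpha,\beta}$ from below by $\kappa^2$, then upgrade this local spectral bound into a quadratic lower bound via a third-order Taylor remainder. Note that the smaller eigenvalue is always $\lambda_{\min}=-2(\beta+|\alpha|)+4/(1-\tilde x^2)$ (attained along $(1,1)$ when $\alpha>0$ and along $(1,-1)$ when $\alpha<0$, i.e.\ along the line joining the two ground states). In the subcritical regime $\beta+|\alpha|<2$, \Cref{LEM:study_g} forces $(\tilde x,\tilde y)=(0,0)$, so $\lambda_{\min}=4-2(\beta+|\alpha|)=2\kappa^2$ with the announced $\kappa^2$. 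In the supercritical regime, I would invoke the Curie--Weiss mean-field equation $\tilde x=\tanh(b'\tilde x)$ with $b'=(\beta+|\alpha|)/2$, available from \Cref{LEM:CW} via the decomposition~\eqref{EQ:freeNRJ}, to rewrite $1-\tilde x^2=\cosh^{-2}(b'\tilde x)$ and hence $\lambda_{\min}=4(\cosh^2(b'\tilde x)-b')$; an elementary manipulation using the concavity of $\tanh$ and the cubic-order lower bound $\tilde x^2\gtrsim (b'-1)/b'^3$ near the critical point then yields $\lambda_{\min}\ge 1-1/b'=\kappa^2$.

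To convert the Hessian lower bound into the stated quadratic inequality, I would expand $g$ around $(\tilde x,\tilde y)$ to third order in Taylor and control the cubic remainder uniformly on the cube $\{\|(x,y)-(\tilde x,\tilde y)\|_\infty\le\delta\}$. The third derivatives of $g$ come solely from the entropy terms through $h'''(s)=(2s-1)/(s(1-s))^2$, which diverges as $s\to \{0,1\}$; the exponential factor $e^{-(\beta+|\alpha|)}$ in the claimed $\delta$ arises precisely to keep the cube uniformly away from the boundary of $(-1,1)^2$, using the a priori estimate $1-\tilde x^2\ge 4e^{-2b'}/(1+e^{-2b'})^2$ that follows from $|\tilde x|\le \tanh(b')$. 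The remaining factor $(\beta+|\alpha|-2)/(4(\beta+|\alpha|))$ is then calibrated so that the absolute value of the cubic remainder is at most half the quadratic main term $(\kappa^2/2)\|(x,y)-(\tilde x,\tilde y)\|_\infty^2$. In the subcritical regime the expansion around $(0,0)$ is uniformly controlled, and the simpler $\delta=\sqrt{(2-(\beta+|\alpha|))/6}$ arises from comparing the quartic term $\propto -x^4$ in the expansion of $-4h((1+x)/2)$ against the quadratic floor $2\kappa^2 x^2$.

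The main obstacle, in my view, is pinning down the exact constants announced. The inequality $\lambda_{\min}\ge 1-2/(\beta+|\alpha|)$ is tight as $\beta+|\alpha|\downarrow 2$—both sides vanish at the same order—so a soft continuity/compactness argument will not suffice and one must leverage the mean-field identity $\cosh^2(b'\tilde x)=1/(1-\tilde x^2)$ together with the critical scaling of $\tilde x(b')$ quantitatively. The accompanying choice of $\delta$ is correspondingly delicate: it must simultaneously avoid the boundary singularities of $h'''$ (hence the $e^{-(\beta+|\alpha|)}$ factor) and kill a cubic remainder whose prefactor degenerates as $\beta+|\alpha|\downarrow 2$ (hence the $(\beta+|\alpha|-2)$ factor), with the dependence on only $\beta+|\alpha|$ reflecting the symmetry visible in the decomposition~\eqref{EQ:freeNRJ}.
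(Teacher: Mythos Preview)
Your Part~1 matches the paper's. For Part~2, the paper takes a different and more economical route: rather than a direct 2D Taylor expansion with cubic remainder, it uses the decomposition~\eqref{EQ:freeNRJ} (and~\eqref{EQ:freeNRJalphaneg} when $\alpha<0$) to reduce $g(x,y)-g(\tilde x,\tilde y)$ to a sum of two \emph{one-dimensional} Curie--Weiss differences $\gCW_{(\beta+|\alpha|)/2}(\cdot)-\gCW_{(\beta+|\alpha|)/2}(\tilde x)$, and then simply quotes the 1D quadratic lower bound of \Cref{LEM:CW}. The four cases (sign of $\alpha$, sub-/supercritical) are dispatched in a few lines each, and the constants $\kappa^2,\delta$ are inherited verbatim from the 1D lemma---the Taylor-with-remainder computation you sketch is performed once, in one dimension, inside the proof of \Cref{LEM:CW}, and the 2D statement follows by additivity. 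This buys two things over your approach: the announced constants fall out without the delicate 2D calibration you anticipate (in particular, your spectral inequality $\lambda_{\min}\ge 1-1/b'$ is precisely the content of~\eqref{EQ:lbspontmag} in \Cref{LEM:CW}, so there is no need to redo the mean-field analysis), and the \emph{global} nature of the bound---valid for all $(x,y)\in(-1,1)^2$, not just inside the $\delta$-cube---is inherited from the 1D monotonicity argument in \Cref{LEM:CW}. Your outline addresses only the local regime and does not explain how to obtain $g(x,y)\ge g(\tilde x,\tilde y)+\tfrac{\kappa^2}{2}\delta^2$ outside the cube; this is the one genuine gap, and closing it in 2D essentially forces you back to the coordinatewise monotonicity that the paper's reduction gives for free.
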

\begin{proof}
Elementary calculus yields directly that
$$
H_{\alpha, \beta}=\left(\begin{array}{cc}-2\beta+\frac{4}{1-\tilde x^2} & -2\alpha \\-2\alpha & -2\beta+\frac{4}{1-\tilde y^2} \end{array}\right)\,.
$$
Moreover, it follows from \Cref{LEM:study_g} that all ground states satisfy $\tilde x^2=\tilde y^2$. This completes the proof of the first point.

 We now turn to the proof of the second point and split the analysis into four cases: (i) $\alpha\ge 0$ and  $\beta + \alpha<2$, (ii)   $\alpha\ge 0$ and  $\beta + \alpha>2$, (iii)  $\alpha<0$ and  $\beta - \alpha<2$, (iv)  $\alpha<0$ and  $\beta - \alpha>2$.

\medskip
 
\noindent {\it Case (i): $\alpha>0$ and  $\beta + \alpha<2$.} Recall that in this case, $g$ has a unique minimum at $(0,0)$. Therefore, in view of~\eqref{EQ:freeNRJ} and  \Cref{LEM:CW}, we have
 \begin{align*}
g(x,y)-g(0,0)&= \gCW_\frac{\beta+|\alpha|}{2} (x)-\gCW_\frac{\beta+|\alpha|}{2} (0) +\gCW_\frac{\beta+|\alpha|}{2} (y)-\gCW_\frac{\beta+|\alpha|}{2} (0)+\alpha(x-y)^2 \\
&\ge \frac12\big(2-(\beta + |\alpha|)\big)\big[(|x-0|\wedge \eps')^2+ (|y-0|\wedge \eps')^2\big]\\
&\ge \frac12\big(2-(\beta + |\alpha|)\big)\big(\|(x,y) - (0,0)\|_\infty\wedge \eps'\big)^2\,.
\end{align*}
where $\DS \eps'=\sqrt{(2-(\beta+|\alpha|))/6}$ which concludes this case.

\medskip

\noindent {\it Case (ii): $\alpha>0$ and  $\beta + \alpha>2$.} Recall that in this case, $g$ has two  minima denoted generically by  $(\tilde x, \tilde y)$ where $\tilde x=\tilde y$. Therefore, in view of~\eqref{EQ:freeNRJ} and \Cref{LEM:CW}, we have
 \begin{align*}
g(x,y)-g(\tilde x,\tilde y)&= \gCW_\frac{\beta+|\alpha|}{2} (x)-\gCW_\frac{\beta+|\alpha|}{2} (\tilde x) +\gCW_\frac{\beta+|\alpha|}{2} (y)-\gCW_\frac{\beta+|\alpha|}{2} (\tilde y)+\alpha(x-y)^2 \\
&\ge \frac12\big(1-\frac2{\beta + |\alpha|}\big)\big[(|x-0|\wedge \eps)^2+ (|y-0|\wedge \eps)^2\big]\\
&\ge \frac12\big(1-\frac2{\beta + |\alpha|}\big)\big(\|(x,y) - (0,0)\|_\infty\wedge \eps\big)^2\,.
\end{align*}
where  $\DS  \eps=e^{-(\beta+|\alpha|)}\frac{\beta + |\alpha|-2}{4(\beta + |\alpha|)}$ which concludes this case.

\medskip

\noindent {\it Case (iii): $\alpha<0$ and  $\beta - \alpha<2$.} Recall that in this case, $g$ has a unique  minimum at $(0,0)$. Moreover, in view of~\eqref{EQ:freeNRJalphaneg} and \Cref{LEM:CW}, it holds
 \begin{align*}
g(x,y)-g(0,0)&\ge \gCW_\frac{\beta+|\alpha|}{2} (x)-\gCW_\frac{\beta+\alpha}{2} (0) +\gCW_\frac{\beta+|\alpha|}{2} (y)-\gCW_\frac{\beta+\alpha}{2} (0) \\
&\ge \gCW_\frac{\beta+|\alpha|}{2} (x)-\gCW_\frac{\beta+|\alpha|}{2} (0) +\gCW_\frac{\beta+|\alpha|}{2} (y)-\gCW_\frac{\beta+|\alpha|}{2} (0) \\
&\ge \frac12\big(2-(\beta + |\alpha|)\big)\big(\|(x,y) - (0,0)\|_\infty\wedge \eps'\big)^2\,.
\end{align*}
where in the second inequality, we used the fact that $$\gCW_\frac{\beta+\alpha}{2} (0)=\gCW_\frac{\beta+|\alpha|}{2} (0)=-4h(1/2)\,,$$ and we concluded as in Case (i).

\medskip

\noindent {\it Case (iv): $\alpha<0$ and  $\beta - \alpha>2$.}  Recall that in this case, $g$ has two  minima denoted generically by  $(\tilde x, \tilde y)$ where $\tilde x=-\tilde y$. Therefore, in view of~\eqref{EQ:freeNRJ} and~\eqref{EQ:freeNRJalphaneg}, we have
 \begin{align*}
g(x,y)-g(\tilde x,\tilde y)&\ge \gCW_\frac{\beta+|\alpha|}{2} (x)-\gCW_\frac{\beta+\alpha}{2} (\tilde x) +\gCW_\frac{\beta+|\alpha|}{2} (y)-\gCW_\frac{\beta+\alpha}{2} (-\tilde x) -4\alpha\tilde x^2\,.
\end{align*}
Next, observe that from the definition~\eqref{EQ:freeNRJCW} of the free energy in the Curie-Weiss model, we have
$$
-\gCW_\frac{\beta+\alpha}{2} (\tilde x) -\gCW_\frac{\beta+\alpha}{2} (-\tilde x)-4\alpha\tilde x^2=-\gCW_\frac{\beta+|\alpha|}{2} (\tilde x) -\gCW_\frac{\beta+|\alpha|}{2} (-\tilde x)\,.
$$
The above two displays yield
 \begin{align*}
g(x,y)-g(\tilde x,\tilde y)&\ge \gCW_\frac{\beta+|\alpha|}{2} (x)-\gCW_\frac{\beta+|\alpha|}{2} (\tilde x) +\gCW_\frac{\beta+|\alpha|}{2} (y)-\gCW_\frac{\beta+|\alpha|}{2} (-\tilde x) \\
&\ge \frac12\big(1-\frac2{\beta + |\alpha|}\big)\big(\|(x,y) - (0,0)\|_\infty\wedge \eps\big)^2\,.
\end{align*}
where we concluded as in Case~(ii).

\end{proof}

\subsection{Concentration}
\label{sub:concentration}
As mentioned above, quantities of the form $\E_{\alpha, \beta}[\varphi(\sigma)]$ cannot in general be computed explicitly in the IBM. Fortunately, it will be sufficient for us to compute quantities of the form $\E_{\alpha, \beta}[\varphi(\mu)]$, where we recall that $\mu=(\mu_S, \mu_{\bar S})$ denotes the pair of local magnetizations of a random configuration $\sigma \in \dhyp$ drawn according to $\p_{\alpha,\beta}$.  While exact computation is still a hard problem, these quantities can be be well approximated using the fact that $\p_{\alpha,\beta}$ is highly concentrated around its ground states for large enough $p$.  

To leverage concentration, we need to consider the ``large $m$" (or equivalently ``large $p$") asymptotic framework. As a result, it will be convenient to write for two sequences $a_m, b_m$ that $a_m \simeq_m b_m$ if $a=(1+o_m(1))b_m$. 

Our main result hinges on the following proposition that compares the distribution of $\mu=(\mu_S, \mu_{\bar S}) \in [-1,1]$ to a certain mixture of Gaussians that are centered at the ground states.

\begin{theorem}
\label{lem:unwprob}
Let $\varphi: [-1, 1]^2 \to [0,1]$ be any nonnegative bounded continuous test function. Denote by $\tilde s$ any ground state and assume that there exists positive constants  $C, \gamma$, for which $\E\big[\varphi\big(\tilde s+\frac{2}{\sqrt{m}}H^{-1/2}Z\big)\big] \ge {C}{m^{-\gamma}}$
where $Z \sim \cN_2(0, I_2)$ and $H=H_{\alpha, \beta}$ denotes the Hessian of the free energy $g_{\alpha, \beta}$ at $\tilde s$. 
Then
$$
\E_{\alpha, \beta}[\varphi(\mu)]\simeq_m \frac{1}{|G|}
\sum_{\tilde s \in G}\E\big[\varphi(\tilde s+ \frac{2}{\sqrt{m}}H^{-1/2}Z)\big]\,.
$$
where $G \subset \{(\pm \tilde x, \pm \tilde x)\}$ denotes the set of ground states of the IBM.
\end{theorem}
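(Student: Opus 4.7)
My plan is to apply a Laplace (saddle-point) approximation to the discrete sum defining $\E_{\alpha,\beta}[\varphi(\mu)]$, leveraging the quadratic control on $g$ near the ground states given by \Cref{LEM:study_g2}. Using the Stirling bounds stated just before~\eqref{EQ:densitymag} together with the definition of $g$, I would first rewrite
$$\E_{\alpha,\beta}[\varphi(\mu)] = \frac{N_m}{D_m},\quad N_m = \sum_{\mu \in \cM^2}\varphi(\mu)\, w_m(\mu)\, e^{-\frac{m}{4}g(\mu)},\quad D_m = \sum_{\mu \in \cM^2}w_m(\mu)\, e^{-\frac{m}{4}g(\mu)},$$
where $w_m(\mu)$ is a Stirling-type correction factor that is smooth on any compact subset of $(-1,1)^2$ and satisfies $\ubar{c}^2/m \le w_m(\mu) \le \bar{c}^2/m$ there.

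Next I would localize both sums around the ground states. Taking $\delta$ and $\kappa$ as in \Cref{LEM:study_g2}, the quadratic lower bound~\eqref{EQ:quadrlb} shows that the contribution to either sum from $\mu \in \cM^2$ with $\|\mu - \tilde s\|_\infty > \delta$ for every $\tilde s \in G$ is at most a polynomial factor in $m$ times $e^{-m\kappa^2\delta^2/8}\cdot e^{-\frac{m}{4}g^*}$, hence negligible at the desired order. On the remaining boxes of $\ell_\infty$-radius $\delta$ around each $\tilde s$, I would Taylor expand $g(\mu) = g^* + \tfrac{1}{2}(\mu-\tilde s)^\top H(\mu-\tilde s) + O(\|\mu-\tilde s\|^3)$, use the continuity of $w_m$ at $\tilde s$, and treat the lattice sum (spacing $2/m$) as a Riemann sum approximating the Gaussian integral $\int \varphi(\mu)\, e^{-\frac{m}{8}(\mu-\tilde s)^\top H(\mu-\tilde s)}\, d\mu$. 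The change of variables $\mu = \tilde s + \tfrac{2}{\sqrt m}H^{-1/2}z$ converts this integral into a prefactor (depending only on $H$, $m$, and the common value $w_m(\tilde s) e^{-\frac{m}{4}g^*}$) times $\E[\varphi(\tilde s + \tfrac{2}{\sqrt m}H^{-1/2}Z)]$. By \Cref{LEM:study_g} and the first part of \Cref{LEM:study_g2}, all ground states share the same value $g^*$ and the same Hessian $H$ (since the Hessian depends on the $\tilde s$ only through $\tilde x^2$), so this prefactor is identical for every $\tilde s \in G$ and cancels in the ratio $N_m/D_m$, leaving exactly $\frac{1}{|G|}\sum_{\tilde s \in G}\E[\varphi(\tilde s + \tfrac{2}{\sqrt m}H^{-1/2}Z)]$.

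The main obstacle is to make each of these approximations \emph{multiplicative}, since the conclusion is the asymptotic equivalence $\simeq_m$ rather than a leading-order identity. The hypothesis $\E[\varphi(\tilde s + \tfrac{2}{\sqrt m}H^{-1/2}Z)] \ge C m^{-\gamma}$ is designed exactly for this purpose: it ensures that the polynomial and exponential error terms arising from (i) the cubic Taylor remainder of $g$, (ii) the multiplicative smoothness of $w_m$, and (iii) the Riemann-sum error in replacing the lattice sum by the Gaussian integral can all be absorbed into a $(1+o_m(1))$ factor in $N_m$; the denominator $D_m$ is handled similarly with $\varphi\equiv 1$, for which the Gaussian expectation equals $1$ so the lower-bound hypothesis is automatic. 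The only subtlety tied to the mere continuity (rather than smoothness) of $\varphi$ appears in step~(iii): since the Gaussian integrand concentrates on a box of side $O(m^{-1/2})$, the modulus of continuity of $\varphi$ on that shrinking scale yields a correction that is itself $o_m(1)$ and is negligible under the stated lower bound.
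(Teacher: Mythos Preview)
Your overall strategy is the paper's own Laplace/saddle-point argument, and the final cancellation of prefactors via $\varphi\equiv 1$ is exactly what the paper does. However, there is a real gap in your localization step: you take the \emph{fixed} radius $\delta$ from \Cref{LEM:study_g2} both to discard the far tail and to Taylor expand $g$ inside the boxes. On a box of fixed $\ell_\infty$-radius~$\delta$, the cubic remainder $R(u)=g(\tilde s+u)-g^*-\tfrac12 u^\top H u=O(\|u\|^3)$ contributes a factor $\exp\!\big(-\tfrac{m}{4}R(u)\big)$ to each summand, and for $\|u\|$ of order $\delta$ this factor is $e^{\pm \Theta(m)}$, not $1+o_m(1)$. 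So the claim that ``the cubic Taylor remainder \ldots\ can be absorbed into a $(1+o_m(1))$ factor'' fails as stated.

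The paper fixes this by using a \emph{shrinking} window $\delta_m=(\rho/\kappa)\sqrt{(\log m)/m}$. Two things then happen simultaneously: (a) on $\|u\|_\infty\le \delta_m$ the exponent correction is $m\cdot O(\delta_m^3)=O\!\big((\log m)^{3/2}/\sqrt m\big)\to 0$, so the Taylor step is genuinely a $(1+o_m(1))$ multiplicative error; and (b) the tail outside $\cV$ is now only polynomially small, namely $O(m^{2-\rho^2/2})$ after using \eqref{EQ:quadrlb}. This is precisely why the hypothesis $\E\big[\varphi(\tilde s+\tfrac{2}{\sqrt m}H^{-1/2}Z)\big]\ge Cm^{-\gamma}$ is needed: one chooses $\rho$ large enough (the paper takes $\rho>4\sqrt{8\gamma}$) so that the polynomial tail is $o(m^{-\gamma})$ and can be absorbed multiplicatively into $N_m$. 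With your fixed $\delta$ the tail is exponentially small and the hypothesis would play no role, which is a sign that the window is wrong. The same shrinking $\delta_m$ also makes the prefactor step clean, since $(1-\mu_S^2)^{-1/2}(1-\mu_{\bar S}^2)^{-1/2}\to (1-\tilde x^2)^{-1}$ uniformly on the shrinking box, and it lets the Riemann-sum-to-integral replacement go through with $\varphi$ merely continuous, because the integrand lives on a window of vanishing diameter around $\tilde s$.
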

\begin{proof}
Recall that $\cM^2$ defined in~\eqref{EQ:defM} denotes the set of possible values for pairs of local magnetization and observe that
$$
\E_{\alpha, \beta}[\varphi(\mu)]=\frac{1}{Z_{\alpha,\beta}}\sum_{\mu \in \cM^2}\varphi(\mu) z_m(\mu)\, ,
$$
where
  \begin{equation}
  \label{EQ:defzm}
z_m(\mu):=  
    \exp\inp{
      -\frac{m}{4}\inp{-2\alpha \mu_S\mu_{\bar S} -
        \beta  (\mu_S^2 +\mu_{\bar S}^2  )}}
    {m \choose \frac{\mu_S+1}{2}m}
    {m \choose \frac{\mu_{\bar S}+1}{2}m}
  \end{equation}

We split the local magnetization $\mu$  according to their $\ell_2$ distance to the closest ground state. Let $G \subset [0,1]^2$ denote the set of ground states and fix
  $\delta \defeq (\rho/\kappa)\sqrt{(\log m)/m}$, where $\rho>0$ is a constant to be chosen later and $\kappa$ is defined in \Cref{LEM:study_g2}. For any  ground state $\tilde s \in G$, define $\cV_{\tilde s}$ to be the neighborhood of $\tilde s$ defined by
$$
\cV_{\tilde s}=\big\{ \mu \in \cM^2 \,:\, \|\mu-\tilde s\|_\infty \le \delta\big\}\,,
$$ 
where $\delta>0$ is also defined in \Cref{LEM:study_g2}.
Moreover, define 
$$
\cV = \bigcup_{\tilde s \in G}\cV_{\tilde s}\,,
$$
and assume that $m$ is large enough so that (i) the above union is a disjoint one and (ii), there exists a constant $C>0$ depending on $\alpha$ and $\beta$ such that for any  $(x,y) \in \cV$, we have $||x|-1|\wedge ||y|-1| \ge C>0$. Denote by $g_{\alpha,\beta}^*$ the value of the free energy at any of the ground states.

We first treat the magnetizations outside $\cV$. Using \Cref{LEM:study_g2} together with \Cref{lem:binom-bound}, we get
  \begin{align}
   0\le \exp\big(\frac{m}{4}  g_{\alpha,\beta}^*\big)
    \sum_{\mu \notin \cV} \varphi(\mu)z_m(\mu)   &\leq
    \exp\big(\frac{m}{4} g_{\alpha,\beta}^*\big) \sum_{\mu \notin \cV}
    \exp\inp{-\frac{m}{4} g_{\alpha,\beta}(\mu)}\nonumber\\
    &\leq m^2 \exp\big(-\frac{m}{4}\frac{\kappa^2\delta^2}{2}\big)\le m^{2-\frac{\rho^2}{2}}=o_m(m^{-\gamma})\,,\label{eq:2}
\end{align}
for $\rho>4\sqrt{8\gamma}$. 

Next assume that $\mu \in \cV$. Our starting point is
the following approximation, that follows from \Cref{lem:binom-stirling}: for any $\mu \in \cV$, 
\begin{equation}
\label{eq:3}
z_m(\mu)= \frac{1}{\pi m}
    \frac{\exp\inp{-\frac{m}{4}
        g_{\alpha,\beta}(\mu_S, \mu_{\bar{S}}) }
    }{
      \sqrt{(1-\mu_S^2)(1-\mu_{\bar{S}}^2)}
    }(1+o_m(1))\,,
\end{equation}

Define $\cV'= \cV_{\tilde s}-\{\tilde s\}$.
A Taylor expansion
around $\tilde s$ gives for nay $u \in \cV'$,
$$
  g_{\alpha,\beta}(\tilde s + u) = g_{\alpha, \beta}(\tilde s)
  + \frac{1}{2}u^\top H u + O(\delta^3).
$$
where $H=H_{\alpha, \beta}$ denotes the Hessian of $g_{\alpha,\beta}$ at the ground state $\tilde s$.
The above two displays yield
\begin{align*}
 \exp&\big(\frac{m}{4} g_{\alpha,\beta}^*\big)
    \sum_{\mu \in \cV_{\tilde s}}  \varphi(\mu) z_m(\mu)\\
    &= \exp\big(\frac{m}{4} g_{\alpha,\beta}^*\big)
    \sum_{u \in\cV'} \varphi(\tilde s + u)z_m(\tilde s + u)\\
   &\simeq_m\frac{1}{\pi m(1-\tilde x^2)}\sum_{u \in\cV'}\varphi(\tilde s + u)\exp\big( -\frac{m}{8} u^\top H u \big)\\
&\simeq_m\frac{m}{\pi (1-\tilde x^2)}\int_{\delta \cB_\infty}\varphi(\tilde s +x)\exp\big( -\frac{m}{8} x^\top H x \big)dx\\
&=\frac{1}{\pi (1-\tilde x^2)}\frac{1}{\sqrt{\det H}}\int_{\|H^{-\frac{1}{2}}z\|_\infty\le \frac{\delta\sqrt{m}}{2}}\varphi\big(\tilde s+\frac{2}{\sqrt{m}}H^{-1/2}z\big)\exp\big( -\frac{\|z\|_2}{2} \big)dz\\
&\simeq_m\frac{1}{1-\tilde x^2}\frac{2}{\sqrt{\det H}}\Big(\E\big[\varphi\big(\tilde s+\frac{2}{\sqrt{m}}H^{-1/2}Z\big)\big]- T_m\Big)\,.
\end{align*}
where $Z \sim \cN_2(0,I_2)$ and
$$
T_m=\int_{z \,:\,z^\top H^{-1}z \ge \frac{m\delta^2}{2}  }\varphi\big(\tilde s+\frac{2}{\sqrt{m}}H^{-1/2}z\big)\exp\big( -\frac{\|z\|_2}{2} \big)dz
$$
Here, the third equality replaces the sum by a Riemann integral and in
the last one we use the following facts: (i)  the set of vectors $z$ satisfying $\|H^{-\frac{1}{2}}z\|_\infty\le 1$ contains a Euclidean ball of positive radius $r(\alpha, \beta)$ and (ii) $\delta \sqrt{m}\to \infty$. %
Next, observe that since $\varphi$ takes values in $[0,1]$, we have
\begin{align}
0\le T_m&\le 2\pi \p(Z^\top H Z \ge  m/2)\nonumber\\
&\le 2\pi \p\big(\|Z\|^2-2 \ge  \frac{m}{2 \lambda_{\max}(H)}-2\big)\nonumber\\
&\le 2\pi \sqrt{e}\exp \big(-\frac{m}{8 \lambda_{\max}(H)}  \big)=o(m^{-\gamma}) \label{eq:boundTm}
\end{align}
for $m \ge 8 \lambda_{\max}(H)$ and where we used~\Cref{lem:laumas00}.

Since the same calculation holds for all ground states in $G$, and because the sets $\cV_{\tilde s},\ \tilde s\in G$ are disjoint, we get that
\begin{align*}
 \exp&\big(\frac{m}{4} g_{\alpha,\beta}^*\big)
    \sum_{\mu \in \cV } \varphi(\mu) z_m(\mu)\simeq_m\frac{1}{1-\tilde x^2}\frac{2}{\sqrt{\det H}}\sum_{\tilde s \in G}\E\big[\varphi\big(\tilde s+\frac{2}{\sqrt{m}}H^{-1/2}Z\big)\big]\,.
\end{align*}
Together with~\eqref{eq:2}, the above display yields 
$$
\sum_{\mu \in \cM^2}\varphi(\mu) z_m(\mu)\simeq_m \frac{2e^{-\frac{m}{4} g_{\alpha,\beta}^*}}{(1-\tilde x^2)\sqrt{\det H}}
\sum_{\tilde s \in G}\E\big[\varphi(\tilde s+ \frac{2}{\sqrt{m}}H^{-1/2}Z)\big]\,,
$$
In particular, this expression yields for $\varphi \equiv 1$, 
$$
Z_{\alpha, \beta}\simeq_m \frac{2|G|e^{-\frac{m}{4} g_{\alpha,\beta}^*}}{(1-\tilde x^2)\sqrt{\det H}}\,.
$$
The above two displays yield the desired result.
\end{proof}

\subsection{Covariance}

The covariance matrix $\Sigma=\E_{\alpha, \beta}[\sigma \sigma^\top]$ captures the block structure of IBM and thus plays a major role in the statistical applications of Section~\ref{SEC:stat}. 
Moreover, the coefficients of $\Sigma$ can be expressed explicitely in terms of the local magnetization $\mu_S$ and $\mu_{\bar S}$. 

\begin{lemma}
\label{LEM:sigmaexact}
Let $\Sigma=\E_{\alpha, \beta}[\sigma \sigma^\top]$ denote the covariance matrix of a random configuration $\sigma \sim \p_{\alpha,\beta}$. For any $i \neq j \in [p]$, it holds
\begin{align*}
\Delta &:= \Sigma_{ij} = \frac{m}{2(m-1)}\E[ \mu_S^2 + \mu_{\bar S}^2] - \frac{1}{m-1}&&\text{if}   \ \ i\sim j\\
\Omega &:= \Sigma_{ij} = \E[\mu_S \mu_{\bar S}]  &&\text{if} \ \ i\nsim j\,.
\end{align*}
\end{lemma}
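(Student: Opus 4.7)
The plan is to exploit the permutation symmetries of $\p_{\alpha, \beta}$ to reduce the individual entries of $\Sigma$ to the global quantities $\E[\mu_S^2 + \mu_{\bar S}^2]$ and $\E[\mu_S \mu_{\bar S}]$.

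First, I would observe from the definition of the Hamiltonian $\HIBM$ in \eqref{EQ:defH} that $\HIBM(\sigma)$ depends on the pair $(\sigma, S)$ only through the pattern of agreements between sites inside the same block. Consequently, the distribution $\p_{\alpha, \beta}$ is invariant under any permutation $\pi$ of $[p]$ that preserves the partition $\{S, \bar S\}$, as well as under the involution that swaps $S$ with $\bar S$. This implies that $\E[\sigma_i \sigma_j]$ depends on $(i,j)$ only through the indicator of $i \sim j$, so there exist constants $\Delta, \Omega$ such that $\Sigma_{ij} = \Delta$ whenever $i \sim j$, $i \neq j$, and $\Sigma_{ij} = \Omega$ whenever $i \nsim j$. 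The $S \leftrightarrow \bar S$ symmetry additionally yields $\E[\mu_S^2] = \E[\mu_{\bar S}^2]$.

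Second, I would compute $\E[\mu_S^2]$ by expanding the square. Using $\sigma_i^2 = 1$,
\begin{equation*}
\mu_S^2 = \frac{1}{m^2} \sum_{i, j \in S} \sigma_i \sigma_j = \frac{1}{m} + \frac{1}{m^2} \sum_{\substack{i,j \in S \\ i \neq j}} \sigma_i \sigma_j\,.
\end{equation*}
Taking expectation and using that each of the $m(m-1)$ off-diagonal terms equals $\Delta$ gives $\E[\mu_S^2] = \frac{1}{m} + \frac{m-1}{m}\Delta$. The identical computation for $\mu_{\bar S}^2$ combined with the $S \leftrightarrow \bar S$ symmetry yields
\begin{equation*}
\E[\mu_S^2 + \mu_{\bar S}^2] = \frac{2}{m} + \frac{2(m-1)}{m}\Delta\,,
\end{equation*}
which I can solve to obtain the claimed formula for $\Delta$.

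Third, the cross term is even more direct:
\begin{equation*}
\mu_S \mu_{\bar S} = \frac{1}{m^2} \sum_{i \in S} \sum_{j \in \bar S} \sigma_i \sigma_j\,,
\end{equation*}
a sum of $m^2$ terms each with expectation $\Omega$, giving $\E[\mu_S \mu_{\bar S}] = \Omega$ at once. There is no real obstacle in the argument; the only point requiring care is the justification of the permutation-and-swap symmetry of $\p_{\alpha, \beta}$, which follows transparently from the fact that \eqref{EQ:defH} only depends on the equivalence relation $\sim$ induced by the partition.
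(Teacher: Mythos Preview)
Your proof is correct and essentially identical to the paper's: both use the block-permutation symmetry to reduce $\Sigma_{ij}$ to two values, then expand $m^2\mu_S^2=\sum_{i,j\in S}\sigma_i\sigma_j$ and $m^2\mu_S\mu_{\bar S}=\sum_{i\in S,\,j\in\bar S}\sigma_i\sigma_j$ and take expectations. The only cosmetic difference is that you first name the constants $\Delta,\Omega$ and then solve the resulting linear relation, whereas the paper writes the average directly; the algebra and the use of the $S\leftrightarrow\bar S$ symmetry to pass from $\E[\mu_S^2]$ to $\tfrac12\E[\mu_S^2+\mu_{\bar S}^2]$ are the same.
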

\begin{proof}
In this proof, we rely on symmetry of the problem: all the spins $\sigma_i$ in a  given block, $S$ or $\bar S$ have the same marginal distribution. Fix $i \neq j$.

If $i \sim j$, for example if $i,j \in S$, we have by linearity of expectation.
\begin{align*}
\Sigma_{ij}=\E[\sigma_i \sigma_j]&=\frac{1}{m(m-1)}\big(\E\sum_{(i,j) \in S^2}\sigma_i  \sigma_j-m\big)=\frac{m}{m-1} \E[\mu_S^2 ]- \frac{1}{m-1}\,.
\end{align*}
Since $\mu_S$ and $\mu_{\bar S}$ are identically distributed, we obtain the desired result.

For any $i \nsim j$ we have
\[
\Sigma_{ij}=\E[\sigma_i \sigma_j]=\frac{1}{m^2} \E\sum_{(i,j) S\times\bar S} \sigma_i  \sigma_j=\E[\mu_S\mu_{\bar S}]\,, .
\]
\end{proof}

Unlike many models in the statistical literature, computing $\Sigma$ exactly is difficult in the IBM. In particular, it is not immediately clear from \Cref{LEM:sigmaexact} that $\Delta > \Omega$, while this should be intuitively true since $\beta>\alpha$ and therefore the spin interactions are stronger within blocks than across blocks. It turns out that this simple fact can be checked by other means (see \Cref{LEM:klsingle}) for any $m\ge 2$. In the rest of this subsection, we use asymptotic approximations as $m \to \infty$ to prove  effective upper and lower bound on the gap $\Delta -\Omega$.

\begin{proposition}
  \label{prop:gap}
Let $\Delta$ and $\Omega$ be defined as in \Cref{LEM:sigmaexact} and recall that $G$ denotes the set of ground states of the IBM. Then
$$
\Delta-\Omega\simeq_m\frac{1}{2|G|}\sum_{(\tilde x, \tilde y) \in G}(\tilde x -\tilde y)^2 + \frac{1}{m}\big(\frac{(\beta-\alpha)(1-\tilde x^2)^2}{2-(\beta-\alpha)(1-\tilde x^2)}\big)\,.
$$
In particular, 
\begin{itemize}[leftmargin=*]
\item  If $\beta+|\alpha| < 2$, then $\DS \Delta-\Omega\simeq_m\frac{1}{m}\big(\frac{\beta-\alpha}{2-(\beta-\alpha)}\big)$.
\item If $\beta +|\alpha|> 2$, then  three cases arise:
\begin{enumerate}
\item if $\alpha=0$, then $\DS \Delta-\Omega\simeq_m\tilde x^2$,
\item if $\alpha>0$, then $\DS \Delta-\Omega\simeq_m\frac{1}{m}\big(\frac{(\beta-\alpha)(1-\tilde x^2)^2}{2-(\beta-\alpha)(1-\tilde x^2)}\big)>0$
\item if $\alpha<0$, then $\DS \Delta-\Omega\simeq_m2\tilde x^2$\,.
\end{enumerate}
\end{itemize}
\end{proposition}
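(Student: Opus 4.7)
The plan is to reduce both $\Delta$ and $\Omega$ to expectations over the local magnetization pair $\mu=(\mu_S,\mu_{\bar S})$ and then invoke the concentration result \Cref{lem:unwprob} on two well-chosen test functions. First, starting from \Cref{LEM:sigmaexact} and using the identity $\tfrac{m}{2(m-1)}=\tfrac12+\tfrac1{2(m-1)}$, rewrite
\begin{equation*}
\Delta-\Omega \;=\; \tfrac{1}{2}\E_{\alpha,\beta}\bigl[(\mu_S-\mu_{\bar S})^2\bigr] \;-\; \tfrac{1}{2(m-1)}\E_{\alpha,\beta}\bigl[(1-\mu_S^2)+(1-\mu_{\bar S}^2)\bigr].
\end{equation*}
The first term is the ``signal'' carrying the squared block-magnetization difference; the second is a $1/m$ variance correction. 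This already explains the shape of the answer: when some ground state has $\tilde y\neq\tilde x$ the signal is $\Theta(1)$ and dominates, while otherwise both terms are $\Theta(1/m)$ and must be combined carefully.

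Next, apply \Cref{lem:unwprob} to the $[0,1]$-valued continuous test functions $\varphi_1(\mu)=(\mu_S-\mu_{\bar S})^2/4$ and $\varphi_2(\mu)=\tfrac12[(1-\mu_S^2)+(1-\mu_{\bar S}^2)]$. For $Z\sim\cN_2(0,I_2)$ and $u=\tfrac{2}{\sqrt m}H^{-1/2}Z$, elementary Gaussian moments combined with $\tilde x^2=\tilde y^2$ (from \Cref{LEM:study_g}) give
\begin{equation*}
\E\bigl[\varphi_1(\tilde s+u)\bigr]=\tfrac14\bigl((\tilde x-\tilde y)^2+\tfrac{4}{m}v^\top H^{-1}v\bigr), \qquad \E\bigl[\varphi_2(\tilde s+u)\bigr]=(1-\tilde x^2)-\tfrac{2}{m}\tr(H^{-1}),
\end{equation*}
where $v=(1,-1)^\top$. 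By \Cref{LEM:study_g2}, $v$ is an eigenvector of $H_{\alpha,\beta}$ with eigenvalue $\lambda_-=-2(\beta-\alpha)+4/(1-\tilde x^2)$, so
\begin{equation*}
v^\top H^{-1}v\;=\;\tfrac{2}{\lambda_-}\;=\;\tfrac{1-\tilde x^2}{2-(\beta-\alpha)(1-\tilde x^2)},
\end{equation*}
a quantity common to every ground state, which therefore factors out of the average over $G$. The polynomial lower-bound hypothesis of \Cref{lem:unwprob} holds with $\gamma=0$ when $\tilde x\neq\tilde y$ and with $\gamma=1$ otherwise.

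Substituting both approximations into the identity from the first step, replacing $1/(m-1)$ by $(1+o_m(1))/m$, and putting the two $1/m$-contributions over a common denominator yields
\begin{equation*}
\Delta-\Omega \;\simeq_m\; \tfrac{1}{2|G|}\sum_{\tilde s\in G}(\tilde x-\tilde y)^2 \;+\; \tfrac{1}{m}\cdot\tfrac{(\beta-\alpha)(1-\tilde x^2)^2}{2-(\beta-\alpha)(1-\tilde x^2)},
\end{equation*}
as stated. The four sub-cases then follow mechanically from \Cref{LEM:study_g}: $\tilde x=0$ in region (II); $\tfrac{1}{|G|}\sum_{\tilde s}(\tilde x-\tilde y)^2=2\tilde x^2$ when $\alpha=0$ (two of the four symmetric ground states have $\tilde y=\tilde x$ and two have $\tilde y=-\tilde x$); the sum vanishes for the diagonal pair when $\alpha>0$; and equals $4\tilde x^2$ for the anti-diagonal pair when $\alpha<0$.

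\textbf{Main obstacle.} The delicate bookkeeping is in the two cases where $\sum_{\tilde s}(\tilde x-\tilde y)^2=0$, namely region (II) and the sub-case $\alpha>0$, $\beta+\alpha>2$. There the two $1/m$-contributions could a priori cancel at leading order, and one must verify that the residue $(\beta-\alpha)(1-\tilde x^2)^2/[2-(\beta-\alpha)(1-\tilde x^2)]$ is strictly positive; this reduces to $\beta>\alpha$ together with the positivity of $\lambda_-$ at any local minimum of $g_{\alpha,\beta}$, which is part of \Cref{LEM:study_g2}. Apart from this sanity check, everything else is direct Gaussian computation.
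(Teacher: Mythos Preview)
Your proof is correct and follows essentially the same route as the paper: apply \Cref{lem:unwprob} to quadratic test functions in $\mu$, then use the eigenvector identity $v^\top H^{-1}v=(1-\tilde x^2)/[2-(\beta-\alpha)(1-\tilde x^2)]$ from \Cref{LEM:study_g2} to collapse the $1/m$ terms. The only cosmetic difference is that the paper computes $\Delta$ and $\Omega$ separately (via $\varphi(\mu)=\|\mu\|_2^2/2$ and $\mu_S\mu_{\bar S}$) and then subtracts, whereas you target the difference directly through $\varphi_1(\mu)=(\mu_S-\mu_{\bar S})^2/4$; your decomposition makes the ``signal vs.\ $1/m$ correction'' structure visible one step earlier, but the substance is identical.
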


\begin{proof}
It follows from  \cref{LEM:sigmaexact} that 
$$
\Delta=\frac{m}{m-1} \E_{\alpha, \beta}[\varphi(\mu)]-\frac{1}{m-1}
$$	
where $\varphi(\mu)=\|\mu\|_2^2/2$. Therefore, using \Cref{lem:unwprob}, we get that for $Z \sim \cN_2(0,I_2)$,
\begin{align*}
\Delta&\simeq_m\Big( 1+\frac{1}{m}\Big)\frac{1}{2|G|}\sum_{\tilde s \in G}\|\tilde s\|^2_2 + \frac{2}{m}\E\|H^{-1/2}Z\|^2_2 -\frac{1}{m}\\
&=\Big( 1+\frac{1}{m}\Big)\frac{1}{2|G|}\sum_{(\tilde x, \tilde y) \in G}(\tilde x^2+\tilde y^2) + \frac{2}{m}\Tr(H^{-1}) -\frac{1}{m}\,.
\end{align*}
Using the same argument, we get that
$$
\Omega\simeq_m\frac{1}{|G|}\sum_{(\tilde x, \tilde y) \in G} \tilde x\tilde y +\frac{4}{m}e_1^\top H^{-1}e_2 \,,
$$
where $e_1=(1,0)^\top$ and $e_2=(0,1)^\top$ are the vectors of the canonical basis of $\R^2$. Therefore
$$
\Delta-\Omega\simeq_m\frac{1}{2|G|}\sum_{\tilde s \in G}(\tilde x -\tilde y)^2+ \frac{2}{m}v^\top H^{-1} v-\frac{1}{m}(1-\tilde x^2)
$$
where $v=(1, -1)$. \cref{LEM:study_g2} implies that $v$ is an eigenvector of $H$ and thus of $H^{-1}$ and
$$
v^\top H^{-1} v=\frac{1}{\alpha-\beta +2/(1-\tilde x^2)}\,.
$$
This completes the first part of the proof and it remains only to check the different cases.
\begin{itemize}[leftmargin=*]
\item  If $\beta+|\alpha| < 2$, then $\tilde x=\tilde y=0$ is the unique ground state, which yields the result by substitution. 
\item If $\beta +|\alpha|> 2$, and
\begin{enumerate}
\item if $\alpha=0$, then $|G|=4$ and there are two ground states $(\tilde x, -\tilde x)$ and $(-\tilde x, \tilde x)$ for which $(\tilde x-\tilde y)$ does not vanish. The term in $1/m$ is negligible;
\item if $\alpha>0$, then for both ground states $(\tilde x -\tilde y)^2=0$ so that
$$
\DS \Delta-\Omega\simeq_m\frac{1}{m}\big(\frac{(\beta-\alpha)(1-\tilde x^2)^2}{2-(\beta-\alpha)(1-\tilde x^2)}\big)
$$
The fact that this quantity is positive, follows from~\eqref{EQ:approxlog} with $\gamma=0$.

\item if $\alpha<0$, then there are two ground states $(\tilde x, -\tilde x)$ and $(-\tilde x, \tilde x)$ and we can conclude as in the case $\alpha=0$ but gain a factor of $2$ because all the ground states contribute to the constant term.
\end{enumerate}
\end{itemize}
\end{proof}

It follows from \cref{prop:gap} that if $\beta + |\alpha|\neq 2$ then the covariance matrix $\Sigma$ takes two values that are separated by a term of order at least $1/m$ and even sometimes of order 1. In the next section, we leverage this information to derive statistical results.

\section{Clustering in the Ising blockmodel}
\label{SEC:stat}

In this section, we focus on the following clustering task: given $n$ i.i.d observations drawn from $\p_{\alpha, \beta}$, recover the partition $(S,\bar S)$. To that end, we build upon the probabilisitic analysis of the IBM that was carried out in the previous section in order to study the properties of an efficient clustering algorithm together with the fundamental limitations associated to this task.

\subsection{Maximum likelihood estimation}
\label{sub:mle}

Fix a sample size $n \ge 1$. Given $n$ independent copies
$\sigma^{(1)},\ldots,\sigma^{(n)}$ of $\sigma \sim\p_{\alpha,\beta}$, the log-likelihood is given by
$$
\mathcal{L}_n(S) = \sum_{t=1}^n\log\big( \p_{\alpha,\beta}(\sigma^{(t)})\big) = -n \log{Z_{\alpha,\beta}} - \sum_{t=1}^m \HIBM(\sigma^{(t)})\, .
$$
where $Z_{\alpha,\beta}$ is the partition function defined in~\eqref{EQ:partition} and $\HIBM$ is the IBM Hamiltonian defined in~\eqref{EQ:defH}. While both $Z_{\alpha, \beta}$ and $\HIBM$ could depend on the choice of the block $S$, it turns out that $Z_{\alpha, \beta}$ is constant over choices of $S$ such that $|S|=m=p/2$.

\begin{lemma}
\label{LEM:partfunc}
The partition function $Z_{\alpha, \beta}=Z_{\alpha, \beta}(S)$ defined in~\eqref{EQ:partition} is such that $Z_{\alpha, \beta}(S)=Z_{\alpha, \beta}([m])$ for all $S$ of size $|S|=m$. This statement remains true even if $m \neq p/2$.
\end{lemma}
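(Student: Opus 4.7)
The plan is to exploit the obvious symmetry of the partition function under permutations of the sites. Concretely, for any two subsets $S, S' \subset [p]$ with $|S| = |S'| = m$, there exists a permutation $\pi$ of $[p]$ with $\pi(S) = S'$, and I will use $\pi$ to produce a measure-preserving bijection of $\{-1,1\}^p$ that identifies the two Hamiltonians.

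First, I would set up the change of variables. Given the permutation $\pi$, define the map $T_\pi : \{-1,1\}^p \to \{-1,1\}^p$ by $(T_\pi \sigma)_i = \sigma_{\pi^{-1}(i)}$, which is clearly a bijection of the hypercube onto itself. The key observation is that $(i,j) \in S^2$ if and only if $(\pi(i),\pi(j)) \in S'^2$, and similarly for $\bar S^2$ and the cross term $[p]^2 \setminus (S^2 \cup \bar S^2)$. Substituting $i' = \pi(i)$, $j' = \pi(j)$ in the sums defining the Hamiltonian $\mathcal{H}^{\textsc{ibm}}_{\alpha,\beta}$ relative to $S$, one obtains the same Hamiltonian relative to $S'$ evaluated at $T_\pi \sigma$. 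Thus $\HIBM_{,S}(\sigma) = \HIBM_{,S'}(T_\pi \sigma)$ for every $\sigma$.

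Second, I would conclude by summing over $\sigma$: since $T_\pi$ is a bijection of $\{-1,1\}^p$,
\[
Z_{S,\alpha,\beta} = \sum_{\sigma \in \{-1,1\}^p} e^{-\HIBM_{,S}(\sigma)} = \sum_{\sigma \in \{-1,1\}^p} e^{-\HIBM_{,S'}(T_\pi \sigma)} = \sum_{\sigma' \in \{-1,1\}^p} e^{-\HIBM_{,S'}(\sigma')} = Z_{S',\alpha,\beta}.
\]
Taking $S' = [m]$ gives the desired identity. Note that nothing in this argument requires $m = p/2$; only $|S| = |S'|$ is used, so the claim extends verbatim to the unbalanced case.

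There is no real obstacle here, the statement is essentially a tautology once the relabeling bijection is written down; the only thing to be careful about is to treat all three index sets ($S^2$, $\bar S^2$, and the complement) consistently under $\pi$, which is immediate because $\pi$ maps $\bar S$ bijectively onto $\overline{\pi(S)}$.
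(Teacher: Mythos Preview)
Your proof is correct and follows essentially the same relabeling argument as the paper: the paper fixes a bijection $\pi:[p]\to[p]$ sending $[m]$ to $S$, performs the change of variables $\tau=\pi(\sigma)$ on the hypercube, and observes that $\tau^\top\bone_S=\sigma^\top\bone_{[m]}$ (and likewise for $\bar S$), which is exactly your identity $\HIBM_{,S}(\sigma)=\HIBM_{,S'}(T_\pi\sigma)$ specialized to $S'=[m]$.
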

\begin{proof}
Fix $S \subset [p]$ such that $|S|=m$ and denote by $\pi: [p] \to [p]$  any bijection that maps $[m]$ to $S$. By~\eqref{EQ:partition} and~\eqref{EQ:defH2}, it holds
\begin{align*}
&Z_{\alpha, \beta}(S)=\sum_{\sigma \in \dhyp}\exp\Big[\frac{1}{4m}\Big(2\alpha(\sigma^\top\bone_S)(\sigma^\top\bone_{\bar S}) - \beta  \big((\sigma^\top\bone_S)^2 +(\sigma^\top\bone_{\bar S})^2  \big)\Big)\Big]\\
&=\sum_{\substack{\tau=\pi(\sigma)\\\sigma \in \dhyp}}\exp\Big[\frac{1}{4m}\Big(2\alpha(\tau^\top\bone_S)(\tau^\top\bone_{\bar S}) - \beta  \big((\tau^\top\bone_S)^2 +(\tau^\top\bone_{\bar S})^2  \big)\Big)\Big]
\end{align*}
since $\pi$ is a bijection.  Moreover, $\tau^\top \bone_S=\pi(\sigma)^\top \bone_S=\sigma^\top \bone_{[m]}$ and $\tau^\top \bone_{\bar S}=\sigma^\top \bone_{\overline{[m]}}$. Hence $Z_{\alpha, \beta}(S)=Z_{\alpha, \beta}([m])$\,.

\end{proof}

Because of the above lemma, we simply write $Z_{\alpha, \beta}=Z_{\alpha, \beta}(S)$ to emphasize the fact that the  partition function does not depend on $S$. It turns out that the log-likelihood is a simple function of $S$. Indeed, define the matrix  $Q=Q_S\in \R^{p \times p}$ such that $Q_{ij} = \frac{\beta}{p}$ for $i\sim j$ and $Q_{ij} = \frac{\alpha}{p}$ for $i\nsim j$. Observe that~\eqref{EQ:defH2} can be written as
$$
\cH_{\alpha,\beta} (\sigma) = -\frac{1}{2}\sigma^\top Q \sigma =
-\frac{1}{2}\tr( \sigma \sigma^\top Q)\,.$$ 
This in turns implies
$$
\mathcal{L}_n(S) = -n \log{Z_{\alpha,\beta}} + \frac{n}{2} \tr[\hat \Sigma Q]\, ,
$$
where
$\hat \Sigma$ denotes the empirical covariance matrix defined in~\eqref{EQ:defECM}. Since $\alpha < \beta$, it is not hard to see that the likelihood maximization problem $\max_{S\subset [p], |S|=m}\mathcal{L}_n(S)$ is equivalent to
\begin{equation}
\label{EQ:MLE}
\max_{V \in \cP}\tr[ \hat \Sigma V]\,,
\qquad 
\cP  = \{vv^\top : v \in \dhyp ,  v^\top\bone_{[p]} = 0\} \,.
\end{equation}
In particular, estimating the blocks $(S,\bar S)$ amounts to estimating $v_S v_S^\top  \in \cP$, where $v_S=\bone_S-\bone_{\bar S} \in \dhyp$. Note that $v_S v_S^\top = v_{\bar S} v_{\bar S}^\top$.For an adjacency matrix $A$, the optimization problem $\max_{V \in \cP}\tr[ A V]$ is a special case of the \emph{Minimum Bisection} problem and it is known to be NP-hard in general~\citep{GarJohSto76}. To overcome this limitation, various approximation algorithms were suggested over the years, culminating with a poly-logarithmic approximation algorithm~\citep{FeiKra02}. Unfortunately, such approximations are not directly useful in the context of maximum likelihood estimation. Nevertheless, the maximum likelihood estimation problem at hand is not worst case, but rather a random problem. It can be viewed as a variant of the planted partition model (aka stochastic blockmodel) introduced in~\citep{DyeFri89}. Indeed the block structure of  $\Sigma$ unveiled in \Cref{LEM:sigmaexact} can be viewed as similar to the adjacency matrix of a weighted graph with a small bisection. Moreover, $\hat \Sigma$ can be viewed as the matrix $\Sigma$ \emph{planted} in some noise. Here, unlike the original planted partition problem, the noise is correlated and therefore requires a different analysis. In random matrix terminology, the observed matrix in the stochastic block model is of Wigner type, whereas in the IBM, it is of Wishart type. It is therefore not surprising that we can use the same methodology in both cases. In particular, we will use the semidefinite relaxation to the {\sf MAXCUT} problem of ~\cite{GoeWil95} that was already employed in the planted partition model~\citep{AbbBanHal16, HajWuXu16}.

It can actually be impractical to use directly the matrix $\hat \Sigma$ in the above relaxations, and we apply a pre-preprocessing that  amounts to a centering procedure, which simplifies our analysis. Given $\sigma \in \dhyp$, define its centered version $\bar \sigma$ by
\[
\bar \sigma= \sigma - \frac{\mathbf{1}_{[p]}^\top \sigma}{p} \mathbf{1}_{[p]} = P \sigma\,,
\]
where $P=I_p -\frac1p \bonep \bonep^\top$ is the projector onto the subspace orthogonal to $\bone_{[p]}$.
Moreover, let $\Gamma=P\Sigma P$ and  $\hat \Gamma=P\hat \Sigma P$  respectively denote the covariance and empirical covariance matrices of the vector $\bar \sigma$.

Note that for all $V\in \cP$, we have that  $\tr[ \hat \Gamma  V]=\tr[ \hat \Sigma  V] $ since $V \bone_{[p]} \bone^\top_{[p]} = 0$, so that $PVP=V$. It implies that the likelihood function is unchanged over $\cP$ when substituting $\hat \Sigma$ by $\hat \Gamma$. %
Moreover, $\E[\hat \Gamma]=\Gamma$ and the spectral decomposition of $\Gamma$ is given by
\begin{equation}
\label{EQ:Gamma}
\Gamma = (1-\Delta) P + p\frac{\Delta-\Omega}{2}u_S u_S^\top\,,
\end{equation}

where $u_S = v_S/\sqrt{p}$ is a unit vector. Therefore the matrix $\Gamma$ has leading eigenvalue  $(1-\Delta) + p (\Delta-\Omega)/2$ with associated unit eigenvector $u_S$. Moreover,  its eigengap is $p (\Delta-\Omega)/2$. It is well known in matrix perturbation theory that the eigengap plays a key role in the stability of the spectral decomposition of $\Gamma$ when observed with noise.

\subsection{Exact recovery via semidefinite programming}

In this subsection, we consider the following semi-definite programming (SDP) relaxation of the optimization problem~\eqref{EQ:MLE}:
\begin{equation}
\label{EQ:SDP}
\max_{V \in \cE}\tr[ \hat \Gamma V]\,,
\qquad 
  \mathcal{E} = \big\{V \in \cS_p : \diag (V) =\mathbf{1}_{[p]}, V \succeq 0\big\} \,,
\end{equation}
where $\cS_p$ denotes the set of $p \times p$ symmetric real matrices.
The set $\cE$ is the set of correlation matrices, and it is known as the \emph{elliptope}. We recall the definition of the vector $v_S=\bone_S-\bone_{\bar S} \in \dhyp$  and note that $v_S v_S^\top \in \mathcal{P}  \subset \mathcal{E}$.
Moreover, we denote by $\hat V^{\textsf{SDP}}$ any solution to the the above program. Our goal is to show that~\eqref{EQ:SDP} has a unique solution given by
$\hat V^{\textsf{SDP}} = v_Sv_S^\top$, i.e., the SDP relaxation is tight. In contrast to the MLE, this estimator can be computed efficiently by interior-point methods \citep{BoyVan04}.

While the dual certificate approach of \cite{AbbBanHal16} could be used in this case (see also~\cite{HajWuXu16}) we employ a slightly different proof technique, more geometric, that we find to be more transparent. This approach is motivated by the idea that the relaxation is tight in the population case, suggesting that it might be the case as well when $\hat \Gamma$ is close to $\Gamma$.

Recall that for any $X_0 \in \mathcal{E}$, the normal cone to
$\mathcal{E}$ at $X_0$ is denoted by $\mathcal{N}_{\mathcal{E}}(X_0)$ and defined by
$$
\mathcal{N}_{\mathcal{E}}(X_0) = \big\{ C \in \cS_p\, : \, \tr(CX) \le \tr(CX_0) \, , \, \forall X \in \mathcal{E}\big\} \, .
$$
It is the cone of matrices $C \in \cS_p$  such that $\max_{X \in \mathcal{E}} \tr(CX)=\tr(CX_0)$. Therefore,  $v_Sv_S^\top$ is a solution of~\eqref{EQ:SDP}, i.e., the SDP relaxation is tight,  whenever
$\hat \Gamma \in \mathcal{N}_{\mathcal{E}}(v_S v_S^\top)$. The normal cone can be described using the following Laplacian operator. For any matrix $C \in \cS_p$, define
$$
L_S(C) :=   \mathbf{diag}(Cv_Sv_S^\top) - C,
$$
and observe that $L_S(C)v_S=0$. Indeed, since $v_S \in \dhyp$, it holds,
$$
\mathbf{diag}(Cv_Sv_S^\top)v_S=\mathbf{diag}(Cv_S\bone_{[p]}^\top)\bone_{[p]}=Cv_S\,.
$$

\begin{proposition}
\label{PRO:conelaplace}
For any matrix $C \in \cS_p$, the following are equivalent
\begin{enumerate}
\item $C \in \mathcal{N}_{\mathcal{E}_p}(v_S v_S^\top)\, .$
\item $L_{S}(C)=\mathbf{diag}(Cv_S v_S^\top) - C\succeq 0 \,,$
\end{enumerate}
Moreover, if $L_{S}(C)\succeq 0$ has only one eigenvalue equal to 0, then $v_S v_S^\top$ is the unique maximizer of $\tr(CV)$ over $V \in \mathcal{E}$.
\end{proposition}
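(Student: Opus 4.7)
The plan is to recognize that (1) is precisely the optimality condition for the semidefinite program $\max_{V\in\mathcal{E}}\tr(CV)$ at the feasible point $v_Sv_S^\top$, and to derive its equivalence with (2) from the Lagrangian structure of this SDP. The easy direction (2)$\Rightarrow$(1) needs no duality and I would present it first: assuming $L_S(C)\succeq 0$, for any $V\in\mathcal{E}$ the pairing of two PSD matrices is nonnegative, so
$$
0\le\tr(L_S(C)V)=\sum_i (Cv_Sv_S^\top)_{ii}V_{ii}-\tr(CV)=\tr(Cv_Sv_S^\top)-\tr(CV),
$$
where the last equality uses the defining property $V_{ii}=1$ of the elliptope together with $\tr(Cv_Sv_S^\top)=v_S^\top Cv_S$. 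Rearranging gives $\tr(CV)\le\tr(Cv_Sv_S^\top)$ for all $V\in\mathcal{E}$, i.e., $C\in\mathcal{N}_{\mathcal{E}}(v_Sv_S^\top)$.

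For the converse (1)$\Rightarrow$(2), I would decompose the normal cone to $\mathcal{E}$ at $v_Sv_S^\top$ as the sum of the normals to its two defining pieces. The normal cone to the affine slice $\{V\in\cS_p:\diag(V)=\bone_{[p]}\}$ is the diagonal subspace $\{\mathbf{diag}(\lambda):\lambda\in\R^p\}$, and the normal cone to the PSD cone at the rank-one face containing $v_Sv_S^\top$ is $\{-\Lambda:\Lambda\succeq 0,\ \Lambda v_S=0\}$. Since $I_p\in\mathcal{E}$ is a Slater point, the Moreau--Rockafellar sum rule applies, so any $C\in\mathcal{N}_{\mathcal{E}}(v_Sv_S^\top)$ can be written as $C=\mathbf{diag}(\lambda)-\Lambda$ with $\Lambda\succeq 0$ and $\Lambda v_S=0$. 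Evaluating $\mathbf{diag}(\lambda)v_S=Cv_S$ coordinatewise and using $(v_S)_i^2=1$ pins down $\lambda_i=(Cv_S)_i(v_S)_i=(Cv_Sv_S^\top)_{ii}$, so $\Lambda=\mathbf{diag}(Cv_Sv_S^\top)-C=L_S(C)$, and $\Lambda\succeq 0$ is precisely (2).

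For the uniqueness clause, the identity $L_S(C)v_S=0$ is already noted in the text, so under the hypothesis that $0$ is a simple eigenvalue we have $\ker L_S(C)=\mathrm{span}(v_S)$. Any maximizer $V^\star\in\mathcal{E}$ satisfies $\tr(CV^\star)=\tr(Cv_Sv_S^\top)$, which by the trace identity of the first paragraph is equivalent to $\tr(L_S(C)V^\star)=0$; since both matrices are PSD, this forces $L_S(C)V^\star=0$ and hence $\mathrm{range}(V^\star)\subseteq\ker L_S(C)=\mathrm{span}(v_S)$. So $V^\star=cv_Sv_S^\top$ for some $c\ge 0$, and the diagonal constraint $V^\star_{ii}=1$ fixes $c=1$. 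The main obstacle I anticipate is giving a self-contained justification of the normal-cone sum decomposition used in the converse; this is a standard convex-analysis fact under Slater, but since the paper has not imported that machinery, I would as a fallback dualize the SDP directly, verify strong duality using compactness of $\mathcal{E}$ together with the Slater point $I_p$, and then read off the same identification $\Lambda=L_S(C)$ from complementary slackness at the rank-one primal optimum.
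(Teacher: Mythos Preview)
Your proposal is correct and follows essentially the same route as the paper, with two minor differences worth noting. For the direction (2)$\Rightarrow$(1), your argument is actually more elementary than the paper's: the paper first invokes the Laurent--Poljak characterization $\mathcal{N}_{\mathcal{E}}(v_Sv_S^\top)=\{D-M:D\text{ diagonal},\ M\succeq 0,\ v_S^\top Mv_S=0\}$ and then checks that $C=\diag(Cv_Sv_S^\top)-L_S(C)$ fits this mold, whereas you bypass the characterization entirely via the one-line trace inequality $\tr(L_S(C)V)\ge 0$ and the diagonal constraint. For (1)$\Rightarrow$(2), the paper cites Laurent--Poljak for the decomposition $C=D-M$ and then pins down $D$ and $M$ exactly as you do; your Moreau--Rockafellar derivation simply reproduces that cited fact from first principles. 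The uniqueness argument is identical in both. So the approaches coincide in substance; your (2)$\Rightarrow$(1) is cleaner, while the paper's (1)$\Rightarrow$(2) is shorter by outsourcing the normal-cone structure to the literature.
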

\begin{figure}
  \centering \includegraphics[width=0.4\textwidth]{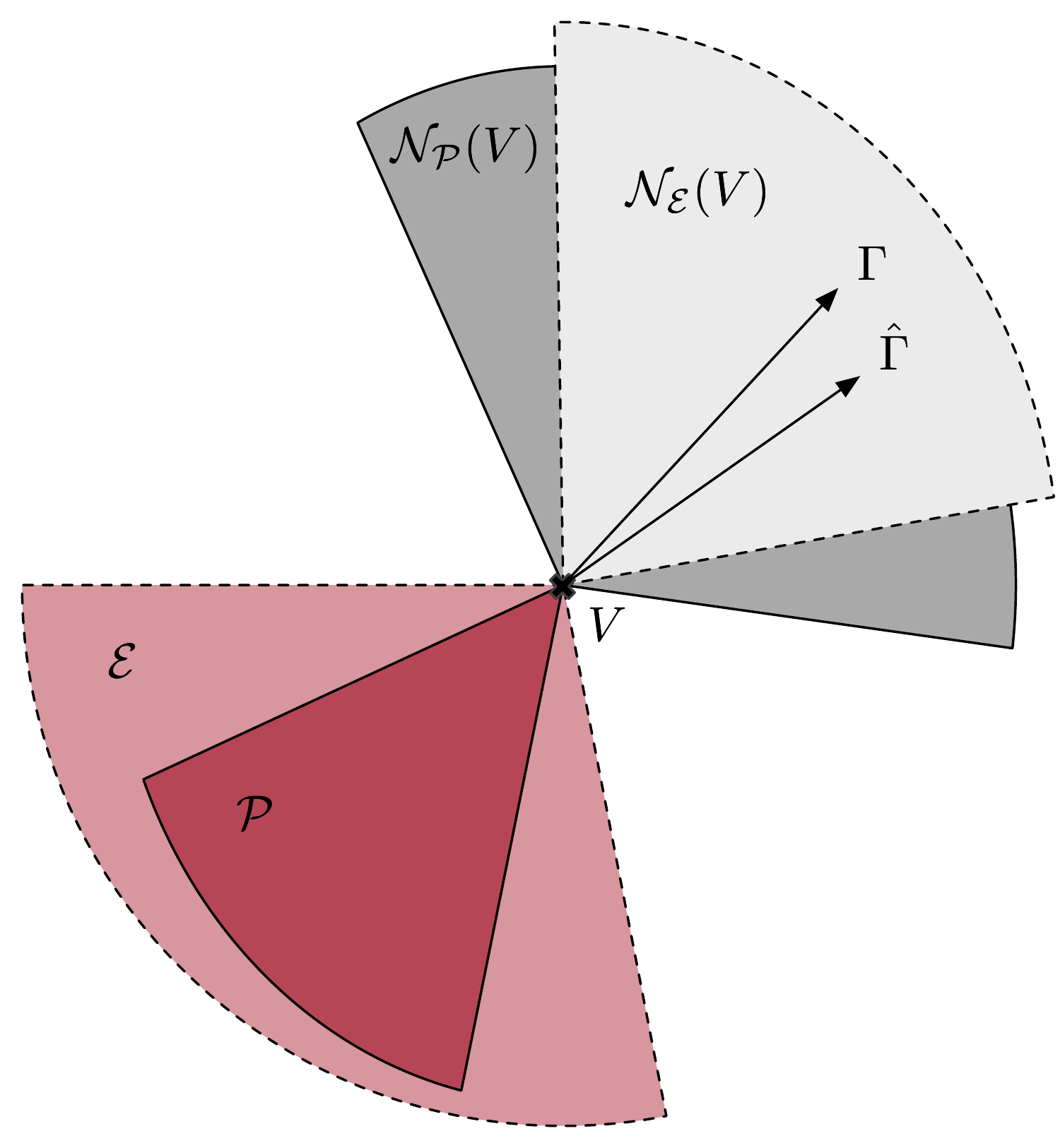}
\caption{The geometric interpretation for the analysis of this convex relaxation. In the population case, the true value of the parameter $V=v_Sv_S^\top$ is the unique solution of both the maximum likelihood problem on $\mathcal{P}$ and of the convex relaxation on $\mathcal{E}$, as $\Gamma$ belongs to both normal cones at $V$. The relaxation is therefore tight with $\Gamma$ as input. We show that when the sample size is large enough, the sample matrix $\hat \Gamma$ is close enough to $\Gamma$ and also in both normal cones, making $V$ the solution to both problems.}
\label{FIG:cones}
\end{figure}
\begin{proof}
It is known~\citep[see][]{LauPol96} that the normal cone $\mathcal{N}_{\mathcal{E}}(v_S v_S^\top)$ is given by
$$
\mathcal{N}_{\mathcal{E}}(v_S v_S^\top) = \big\{ C \in \cS_p \, : \, C = D-M , D \; \text{diagonal, }, M \succeq 0 , v_S^\top Mv_S  = 0\big\}\, ,
$$
where $M\succeq 0$ denotes that $M$ is a symmetric, semidefinite positive matrix. We are going to make use of the following facts. First for any diagonal matrix $D$ and any $V \in \cE$, it holds $\diag (DV) = D$. Second, taking $V=v_Sv_S^\top$, we get 
$$L_S(C) v_Sv_S^\top = \diag (Cv_Sv_S^\top)v_Sv_S^\top - Cv_Sv_S^\top=\diag (Cv_Sv_S^\top)-Cv_Sv_S^\top\,,$$ so that
\begin{equation}
\label{EQ:prcone1}
\diag (L_S(C)v_Sv_S^\top)=0\,.
\end{equation}

$2. \Rightarrow 1.$ Let $C \in v_S^\top$ be such that $L_S(C)\succeq 0$. By definition, we have $C=\diag (Cv_S v_S^\top)-L_S(C)$ and it remains to check that $ v_S^\top L_S(C)v_S = 0$, which follows readily from~\eqref{EQ:prcone1} with $V=v_Sv_S^\top$.

$1. \Rightarrow 2.$ Let
$C=D-M \in \mathcal{N}_{\mathcal{E}_p}(v_S v_S^\top)$ where $D$ is diagonal and $M \succeq 0$,  $v_S^\top M v_S = 0$, which implies that $Mv_S=0$. It yields, $Cv_S v_S^\top = Dv_S v_S^\top$ and
$\diag (Cv_S v_S^\top) = \diag (Dv_S v_S^\top) = D$ so that the
decomposition is  necessarily $D= \diag (Cv_S v_S^\top)$
and $M = L_S(C) = \diag (Cv_S v_S^\top) - C$. In particular, $L_S(C) \succeq 0$.

\medskip

Thus, if $L_S(C)\succeq 0$ then  $v_S v_S^\top$ is a maximizer of $\tr(CV)$ over $V \in \mathcal{E}$. To prove uniqueness, recall that for any  maximizer $V \in \mathcal{E}$, we have $\tr(CV)=\tr(Cv_S v_S^\top)$. Plugging $C = \diag (Cv_S v_S^\top) - L_S(C)$ and using~\eqref{EQ:prcone1} yields
\begin{align*}
\tr( \diag (Cv_S v_S^\top) V)- \tr(L_S(C)V)&=\tr( \diag (Cv_S v_S^\top) v_S v_S^\top)\\
&=\tr( \diag (Cv_S v_S^\top))\,.
\end{align*}
Recall that $\tr(\diag (Cv_S v_S^\top) V)=\tr(\diag (Cv_S v_S^\top))$ so that the above display yields $\tr(L_S(C)V)=0$. Since $V\succeq 0$ and the kernel of the semidefinite positive matrix $L_S(C)$ is spanned by $v_S$, we have that $V=v_S v_S^\top$.

\end{proof}

It follows from \Cref{PRO:conelaplace} that if $L_S(\hat \Gamma) \succeq 0$ and has only one eigenvalue equal to zero, then $v_Sv_S^\top$ is the solution to~\eqref{EQ:SDP}. In particular, in this case, the SDP allows exact recovery of the block structure $(S,\bar S)$. Observe that the conditions of \Cref{PRO:conelaplace} hold if $\hat \Gamma$ is replaced by the population matrix $\Gamma$. Indeed, using~\eqref{EQ:Gamma}, we obtain
\begin{align*}
L_S(\Gamma)&=\big(1-\Delta + p\frac{\Delta-\Omega}{2}\big)I_p - (1-\Delta)P -p \frac{\Delta-\Omega}{2}u_Su_S^\top\\
&=(1-\Delta) \frac{\bone_{[p]}}{\sqrt{p}} \frac{\bone^\top_{[p]}}{\sqrt{p}}  -p \frac{\Delta-\Omega}{2}u_Su_S^\top + p\frac{\Delta-\Omega}{2}I_p\,,
\end{align*}
where we used the fact that $I_p-P$ is the projector onto the linear span of $\bone_{[p]}$.
Therefore, the eigenvalues of $L_S(\Gamma)$ are $0$, $1-\Delta+ p(\Delta-\Omega)/2 $, both with multiplicity 1 and $p(\Delta-\Omega)/2$ with multiplicity $p-1$. In particular, for $p \ge 2$, $L_S(\Gamma)\succeq 0$ and it has only one eigenvalue equal to zero.

Extending this result to $L_S(\hat \Gamma)$ yields the following theorem, as illustrated in Figure~\ref{FIG:cones}. Let   $C_{\alpha,\beta}>0$ be a positive constant such that $\Delta-\Omega>C_{\alpha , \beta} /p$. Note that such a constant $C_{\alpha, \beta}$ is guaranteed to exist in view of \Cref{prop:gap}.

\begin{theorem}
\label{thm:sdp}
The SDP relaxation~\eqref{EQ:SDP} has a unique maximum at $V=v_Sv_S^\top$ with probability $1-\delta$ whenever
\[
n >   16\big(3+\frac{2}{C_{\alpha,\beta}}\big)\frac{\log(4p/\delta)}{\Delta-\Omega}(1+o_p(1))\, .
\]
In particular, the SDP relaxation recovers exactly the block structure $(S,\bar S)$. 
\end{theorem}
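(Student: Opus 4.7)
The plan is to invoke \Cref{PRO:conelaplace} and show that, with probability at least $1-\delta$, the Laplacian $L_S(\hat\Gamma)$ is positive semidefinite with one-dimensional kernel $\mathrm{span}(v_S)$. Write $L_S(\hat\Gamma)=L_S(\Gamma)+L_S(E)$ with $E:=\hat\Gamma-\Gamma$. The spectrum computation of $L_S(\Gamma)$ carried out just before the theorem shows that on $v_S^\perp$ one has $L_S(\Gamma)\succeq (p(\Delta-\Omega)/2)\,I$. Since $L_S(C)v_S=0$ for every symmetric $C$ (because the entries of $v_S$ are $\pm 1$, as observed in the definition of $L_S$), the vector $v_S$ stays in the kernel of $L_S(\hat\Gamma)$ for free, so the task reduces to establishing the strict operator-norm bound $\|L_S(E)\|_{\mathrm{op}} < p(\Delta-\Omega)/2$; the same bound immediately gives $L_S(\hat\Gamma)\succ 0$ on $v_S^\perp$, yielding uniqueness.

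The perturbation will be split by the triangle inequality,
\[
\|L_S(E)\|_{\mathrm{op}} \leq \|\diag(Ev_Sv_S^\top)\|_{\mathrm{op}} + \|E\|_{\mathrm{op}} = \max_{i\in[p]}|(Ev_S)_i| + \|E\|_{\mathrm{op}},
\]
and each right-hand summand will be bounded by $p(\Delta-\Omega)/4$ with probability at least $1-\delta/2$. For the first term, I observe that $(\hat\Sigma v_S)_i=\frac{1}{n}\sum_{t=1}^n \sigma_i^{(t)}\langle \sigma^{(t)},v_S\rangle$ is an average of i.i.d.\ random variables bounded by $p$ in absolute value, whose variance is $\E[\langle\sigma,v_S\rangle^2]$; by \Cref{LEM:sigmaexact}, this variance is of order $p^2(\Delta-\Omega)$. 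Bernstein's inequality, a union bound over $i\in[p]$, and the elementary identity $(Ev_S)_i=(P(\hat\Sigma-\Sigma)v_S)_i$ combined with $\|Px\|_\infty\le 2\|x\|_\infty$ yield a tail bound of the form
\[
\max_i |(Ev_S)_i| \lesssim p\sqrt{\frac{(\Delta-\Omega)\log(p/\delta)}{n}} + \frac{p\log(p/\delta)}{n}.
\]
For the second term, I apply matrix Bernstein to $\hat\Sigma-\Sigma=\frac{1}{n}\sum_t(\sigma^{(t)}{\sigma^{(t)}}^\top-\Sigma)$: each summand has operator norm at most $2p$, and the identity $(\sigma\sigma^\top)^2=p\,\sigma\sigma^\top$ gives matrix-variance proxy $p\Sigma$; conjugation by the projector $P$, which leaves $E$ unchanged, allows one to replace $\|\Sigma\|_{\mathrm{op}}$ by $\|\Gamma\|_{\mathrm{op}}=1-\Delta+p(\Delta-\Omega)/2$, producing a bound of the same Bernstein shape. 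Imposing each bound to be strictly below $p(\Delta-\Omega)/4$, and invoking the gap hypothesis $p(\Delta-\Omega)\ge C_{\alpha,\beta}$ to absorb the slower deviation term $p\log(p/\delta)/n$, yields the claimed sample-size threshold; adding the contributions of the two perturbation terms is exactly where the constant $3+2/C_{\alpha,\beta}$ comes from.

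The main technical obstacle is to obtain linear (rather than quadratic) dependence on $1/(\Delta-\Omega)$. A naive Hoeffding-style analysis based only on the worst-case bound $|\sigma_i\langle\sigma,v_S\rangle|\le p$ yields $n\gtrsim \log(p/\delta)/(\Delta-\Omega)^2$, which in the regime $\Delta-\Omega\asymp 1/p$ that dominates \cref{prop:gap} is a factor $p$ worse than the stated bound. The resolution is to use Bernstein rather than Hoeffding and to leverage the fact that the relevant variances, for both the scalar sums producing $(Ev_S)_i$ and the quadratic forms controlling $\|E\|_{\mathrm{op}}$, are of order $p^2(\Delta-\Omega)$, i.e.\ the square of the gap we need to exceed, so that the gap enters the final sample-complexity bound linearly.
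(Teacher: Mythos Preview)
Your proposal is correct and follows essentially the same route as the paper: the paper also invokes \Cref{PRO:conelaplace}, writes $L_S(\hat\Gamma)=L_S(\Gamma)+\diag((\hat\Gamma-\Gamma)v_Sv_S^\top)-(\hat\Gamma-\Gamma)$, uses Weyl's inequality together with $\lambda_2[L_S(\Gamma)]=p(\Delta-\Omega)/2$, and then bounds the two perturbation pieces separately via matrix Bernstein (for $\|\hat\Gamma-\Gamma\|_{\mathrm{op}}$, with variance proxy $p\Gamma$) and scalar Bernstein plus a union bound (for the diagonal term, using that the relevant variance is $O(\Delta-\Omega)$). The only cosmetic difference is that the paper works directly with the centered vectors $\bar\sigma^{(t)}=P\sigma^{(t)}$ from the start, whereas you bound $\hat\Sigma-\Sigma$ and then pass through $P$; this is equivalent and your use of $\|Px\|_\infty\le 2\|x\|_\infty$ correctly handles the extra step.
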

\begin{proof}
Recall that $L_S(\hat \Gamma)v_S=0$ and any $C \in \cS_p$, denote by  $\lambda_2[C]$ its second smallest eigenvalue. Our goal is to show that $\lambda_2[L_S(\hat \Gamma)]>0$. To that end, observe that
$$
L_S(\hat \Gamma)=L_S(\Gamma) +\diag\big((\hat \Gamma - \Gamma)v_Sv_S^\top\big) + \Gamma -\hat \Gamma\,. 
$$
Therefore, using from Weyl's inequality and the fact $\lambda_2[L_S(\Gamma)]=p(\Delta-\Omega)/2$, we get
\begin{equation}
\label{EQ:lambda2}
\lambda_2[L_S(\hat \Gamma)] \ge  p\frac{\Delta-\Omega}{2}-\|\diag\big((\hat \Gamma - \Gamma)v_Sv_S^\top\big)\|_{\mathrm{op}}-\|\hat \Gamma-\Gamma \|_{\mathrm{op}}\,,
\end{equation}
where $\|\cdot\|_{\mathrm{op}}$ denotes the operator norm. Therefore, it is sufficient to upper bound the above operator norms.
This is ensured by the following Lemma.

\begin{lemma}
\label{lem:gammanorms}
Fix $\delta >0$ and define 
$$
\mathcal{R}_{n,p}(\delta)=2p\max\Big(\sqrt{\frac{(1+2/C_{\alpha,\beta})(\Delta-\Omega) \log(4p/\delta)}{n}} \, , \, \frac{(6+4/C_{\alpha, \beta}) \log(p/\delta)}{n} \Big)\,.
$$

With probability $1-\delta$, it holds simultaneously that
\begin{equation}
\label{EQ:normop}
\|\hat \Gamma-\Gamma \|_{\mathrm{op}} \le \mathcal{R}_{n,p}(\delta)(1+o_p(1))\, .
\end{equation}
and
\begin{equation}
\label{EQ:dormdiag}
\|\diag\big((\hat \Gamma - \Gamma)v_Sv_S^\top\big)\|_{\mathrm{op}} \le \mathcal{R}_{n,p}(\delta)(1+o_p(1))\,. 
\end{equation}
\end{lemma}
\begin{proof}

To prove~\eqref{EQ:normop}, we use a Matrix Bernstein inequality for sum of independent matrices from \cite{Tro15}. To that end, note that
$$
\hat \Gamma - \Gamma=\frac1n\sum_{t=1}^n M_t\,,
$$
where $M_1, \ldots, M_n$ are i.i.d random matrices given by $M_t = (\bar \sigma^{(t)}\bar \sigma^{(t) \, \top} - \Gamma)$, $t=1, \ldots, n$.
We have
\[
\|M_t\|_{\mathrm{op}} \le \|\bar \sigma^{(t)}\bar \sigma^{(t) \, \top}\|_{\mathrm{op}} + \|\Gamma\|_\mathrm{op} \le p+\|\Gamma\|_\mathrm{op}\, .
\]
Furthermore, we have that
\begin{align*}
\E[M_t^2] &=\E[\|\bar \sigma^{(t)}\|^2 \bar \sigma^{(t)}\bar \sigma^{(t) \, \top} - \bar \sigma^{(t)}\bar \sigma^{(t) \, \top} \Gamma - \Gamma \bar\sigma^{(t)}\bar \sigma^{(t) \, \top} + \Gamma^2]\\
&= p \E[\bar \sigma^{(t)}\bar \sigma^{(t) \, \top}]  - \Gamma^2 -\Gamma^2 +\Gamma^2\preceq p \Gamma\, .
\end{align*}
As a consequence, $\sum_{t=1}^n \E[M_t^2] \preceq p\Gamma$. By Theorem 1.6.2 in \cite{Tro15}, this yields
\begin{equation}
\label{EQ:tropp1}
\p\big(\|\hat \Gamma - \Gamma\|_{\mathrm{op}} >t \big)\le 2p \exp\Big(-\frac{nt^2}{2p \|\Gamma\|_{\mathrm{op}} + 2(p+\|\Gamma\|_{\mathrm{op}})t} \Big)\, .
\end{equation}
We have $\|\hat \Gamma - \Gamma\|_{\mathrm{op}} \le t$ with probability $1-\delta$ for any $t$ such that
\[
\log(2p/\delta) \le \frac{nt^2}{2p \|\Gamma\|_{\mathrm{op}} + 2(p+\|\Gamma\|_{\mathrm{op}})t} \, .
\]
This holds for all
\[
t \le \max\Big(\sqrt{\frac{4p \|\Gamma\|_{\mathrm{op}} \log(2p/\delta)}{n}} \, , \, \frac{4(p + \|\Gamma\|_{\mathrm{op}}) \log(2p/\delta)}{n} \Big)\, .
\]
To conclude the proof of~\eqref{EQ:normop}, observe that
$$
\|\Gamma\|_{\mathrm{op}}=p\frac{\Delta-\Omega}{2} + 1-\Delta \le \big(1+\frac1{C_{\alpha, \beta}}\big)(\Delta-\Omega)p\,,
$$
where $C_{\alpha, \beta}>0$ is defined immediately before the statement of \Cref{thm:sdp}.

\medskip

We now turn to the proof of~\eqref{EQ:dormdiag}. Recall that $v_S \in \dhyp$ so that the $i$th diagonal element is given by
$$
\diag\big((\hat \Gamma - \Gamma)v_Sv_S^\top\big)_{ii}=e_i^\top (\hat \Gamma - \Gamma)v_S\,,$$
where $e_i$ denotes the $i$th vector of the canonical basis of $\R^p$. Hence,
$$
\|\diag\big((\hat \Gamma - \Gamma)v_Sv_S^\top\big)\|_{\mathrm{op}}=\max_{i \in [p]}\big|\diag\big((\hat \Gamma - \Gamma)v_Sv_S^\top\big)_{ii}\big|=\max_{i \in [p]}|e_i^\top (\hat \Gamma - \Gamma)v_S| \,.
$$
We bound the right hand-side of the above inequality by noting that
\[
e_i^\top (\hat \Gamma - \Gamma)v_S = \frac{m}{n}\sum_{t=1}^n \big( \bar \sigma_i^{(t)} (\mu_S^{(t)}-\mu_{\bar S}^{(t)}) - \E[\bar \sigma_i^{(t)} (\mu_S^{(t)}-\mu_{\bar S}^{(t)})] \big) \,,
\]
where $\mu_S^{(t)}=\bone_S^\top\bar \sigma^{(t)}/m \in [-1,1]$ and $\mu_{\bar S}^{(t)}$ is defined analogously. 
The random variables $\bar \sigma_i^{(t)} (\mu_S^{(t)}-\mu_{\bar S}^{(t)}) - \E[\bar \sigma_i^{(t)} (\mu_S^{(t)}-\mu_{\bar S}^{(t)})]$ are centered, i.i.d., and are bounded in absolute value by 2 for all $t \in [n]$. Moreover, it follows from \Cref{LEM:sigmaexact} that the variance of these random variables is bounded by 
$$
\E[(\mu_S^{(t)} - \mu_{\bar S}^{(t)})^2] \le 2(\Delta-\Omega)+\frac4p=:\nu^2\,.
$$
By a one-dimensional Bernstein inequality, and a union bound over $p$ terms, we have therefore that
\[
\p\big(\max_{i \in [p]} |e_i^\top (\hat \Gamma - \Gamma)v_S| >\frac{p t}{n} \big) \le 2p \exp\Big(-\frac{ t^2/2}{n \nu^2 + 2t/3}\Big)\, .
\]
which yields
\[
\max_{i \in [p]} |e_i^\top (\hat \Gamma - \Gamma)v_S| \le p \max\Big(\sqrt{\frac{2\nu^2 \log(2p/\delta)}{n}}, \frac{4 \log(2p/\delta)}{3n} \Big)\,,
\]
with probability $1-\delta$. It completes the proof of~\eqref{EQ:dormdiag}.
\end{proof}

To conclude the proof of \Cref{thm:sdp}, note that for the prescribed choice of $n$, we have
$$
2\mathcal{R}_{n,p}(\delta)(1+o_p(1))<p\frac{\Delta-\Omega}{2}
$$
and it follows from~\eqref{EQ:lambda2} that $\lambda_2[L_S(\hat \Gamma)] >0$.
\end{proof}
\begin{remark}
We have not attempted to optimize the constant term $16(3+2/C_{\alpha,\beta})$ that appears in \Cref{thm:sdp} and it is arguably suboptimal. One way to see how it can be reduced at least by a factor 2 is by noting that  the factor $p$ in the right-hand side of~\eqref{EQ:tropp1} is in fact superfluous thus resulting in a extra logarithmic factor in~\eqref{EQ:normop}. This is because, akin to the stochastic blockmodel analysis in~\cite{AbbBanHal16}, the matrix deviation inequality from~\cite{Tro15} is too coarse for this problem. The extra factor $p$ may be removed using the   concentration results of \Cref{sub:concentration} but at the cost of a much longer argument. Indeed, using~\Cref{lem:unwprob}, we can establish the concentration of local magnetization around the ground states and conditionally on these magnetizations, the configurations are uniformly distributed. These conditional distributions can be shown to exhibit sub-Gaussian concentration so that $\sigma^\top u$ and thus $\bar \sigma^\top u$ are sub-Gaussian with constant variance proxy for any unit vector $u \in \R^p$. This result can yield a bound for $\|\hat \Gamma-\Gamma \|_{\mathrm{op}}$ using an $\eps$-net argument that is standard in covariance matrix estimation. With this in mind, we could get an upper bound in~\eqref{EQ:normop} that is negligible with respect to $\mathcal{R}_{n,p}$ thereby removing a factor 2. Nevertheless, in absence of a tight control of the constant $C_{\alpha, \beta}$, exact constants are hopeless and beyond the scope of this paper.
\end{remark}

Combined with \Cref{prop:gap} that quantifies the gap $\Delta-\Omega$ in terms of the dimension $p$, \Cref{thm:sdp} readily yields the following corollary.
\begin{corollary}
\label{cor:sdp}
There exists positive constants $C_1$ and $C_2$ that depend on $\alpha$ and $\beta$ such that the following holds. The SDP relaxation~\eqref{EQ:SDP} recovers the block structure $(S, \bar S)$ exactly with probability $1-\delta$ whenever 
\begin{enumerate}
\item $\DS n \ge C_1 p\log(p/\delta)$ if $\beta + |\alpha| <2$ or $\alpha >0$
\item $\DS n \ge C_2 \log(p/\delta)$ otherwise. 
\end{enumerate}
In particular, if $\beta-\alpha >2, \alpha \le 0$ a number of observations that is logarithmic in the dimension $p$ is sufficient to recover the blocks exactly. 
\end{corollary}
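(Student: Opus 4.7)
The plan is to derive \Cref{cor:sdp} as a direct consequence of \Cref{thm:sdp} combined with the asymptotic expansions of the spectral gap $\Delta-\Omega$ provided by \Cref{prop:gap}. \Cref{thm:sdp} tells us that exact recovery of the block structure $(S,\bar S)$ via the SDP relaxation~\eqref{EQ:SDP} holds with probability at least $1-\delta$ whenever
\[
n > 16\bigl(3+\tfrac{2}{C_{\alpha,\beta}}\bigr)\frac{\log(4p/\delta)}{\Delta-\Omega}(1+o_p(1)),
\]
where $C_{\alpha,\beta}>0$ is any constant such that $p(\Delta-\Omega) > C_{\alpha,\beta}$. All that remains is to plug in a suitable lower bound for $\Delta-\Omega$, distinguishing the two qualitatively different regimes.

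First I would handle the ``difficult'' regime corresponding to case 1, i.e.\ $\beta+|\alpha|<2$ or ($\alpha>0$ and $\beta+|\alpha|>2$). In both subcases \Cref{prop:gap} yields an asymptotic expansion of the form $\Delta-\Omega\simeq_m c(\alpha,\beta)/m$ for an explicit positive constant $c(\alpha,\beta)$ (namely $\tfrac{\beta-\alpha}{2-(\beta-\alpha)}$ or $\tfrac{(\beta-\alpha)(1-\tilde x^2)^2}{2-(\beta-\alpha)(1-\tilde x^2)}$, both strictly positive by the assumption $\alpha<\beta$). Since $m=p/2$, we may take $C_{\alpha,\beta} = c(\alpha,\beta)/4$ for $p$ large enough and substitute into the bound of \Cref{thm:sdp}. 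The factor $\log(4p/\delta)/(\Delta-\Omega)$ is then of order $p\log(p/\delta)$, and absorbing the numerical factor $16(3+8/c(\alpha,\beta))$ together with the multiplicative constant from the asymptotics into a single constant $C_1=C_1(\alpha,\beta)$ yields the sufficient condition $n\ge C_1 p\log(p/\delta)$.

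Next I would handle case 2, i.e.\ $\alpha\le 0$ and $\beta+|\alpha|>2$. Here \Cref{prop:gap} gives either $\Delta-\Omega\simeq_m \tilde x^2$ (when $\alpha=0$) or $\Delta-\Omega\simeq_m 2\tilde x^2$ (when $\alpha<0$). In both instances $\tilde x=\tilde x((\beta+|\alpha|)/2)>0$ thanks to the assumption $\beta+|\alpha|>2$ and \Cref{LEM:study_g}, so $\Delta-\Omega$ is bounded below by an $\alpha,\beta$-dependent positive constant for all sufficiently large $p$. In particular $p(\Delta-\Omega)$ grows linearly in $p$, so we may take $C_{\alpha,\beta}=1$ (say) and $16(3+2)\log(4p/\delta)/(\Delta-\Omega)$ is bounded by $C_2\log(p/\delta)$ for some constant $C_2=C_2(\alpha,\beta)$.

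There is no real obstacle here: the argument is essentially a bookkeeping exercise that combines \Cref{thm:sdp} and \Cref{prop:gap}. The only mildly delicate point is to ensure that the various $(1+o_p(1))$ factors in both the theorem and the proposition are absorbed into the constants $C_1,C_2$ by choosing them slightly larger than their asymptotic values, which is legitimate because the statement of the corollary is an asymptotic one valid for $p$ large enough. Edge cases such as $\beta+|\alpha|=2$ are excluded by the explicit case split, since in both case 1 and case 2 the condition $\beta+|\alpha|\neq 2$ is enforced by the assumption that \Cref{prop:gap} produces a nontrivial lower bound on $\Delta-\Omega$.
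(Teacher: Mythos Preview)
Your proposal is correct and matches the paper's approach exactly: the paper does not even give a separate proof of \Cref{cor:sdp}, simply stating that it follows by combining \Cref{thm:sdp} with the asymptotics of $\Delta-\Omega$ from \Cref{prop:gap}. Your case analysis and handling of the $(1+o_p(1))$ factors fill in precisely the bookkeeping the paper leaves implicit.
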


These results suggest that there is a sharp phase transition in sample complexity for this problem, depending on the value of the parameters $\alpha$ and $\beta$. We address this question further in Section~\ref{SEC:conclusion}. The last subsection shows that these rates are, in fact, optimal.
\subsection{Information theoretic limitations}
\label{sub:lb}

In this section, we present lower bounds on the sample size needed to recover the partition $(S,\bar S)$ and compare them to the upper bounds of \Cref{thm:sdp}. In the sequel, we write $\hat S \asymp S$ if either $(\hat S, \bar{\hat S})= (S, \bar{S})$ or $(\hat S, \bar{\hat S})= (\bar{S}, S)$ to indicate that the two partitions are the same. We write $\hat S \not\asymp S$ to indicate that the two partitions are different. 

For any balanced partition $(S,\bar S)$, consider a ``neighborhood'' $\cT_S$ of $(S, \bar S)$ composed of balanced partitions such that for all $(T,\bar T) \in \cT_S$, we have $\rho(S,T) = 1$ and $\rho(\bar S, \bar T)=1$. We first compute the Kullback--Leibler divergence between the distributions $\p_S$ and $\p_T$.

\begin{lemma}
\label{LEM:klsingle}
For any positive $\beta$, $\alpha<\beta$, and $T \in \cT_S$, it holds that
\[
\KL(\p_T,\p_S) = \frac{p-2}{p}(\beta-\alpha)(\Delta-\Omega)\, .
\]

\end{lemma}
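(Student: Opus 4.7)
The plan is to use the partition-function invariance to reduce the KL to an expectation of a Hamiltonian difference, then compute that difference as a single-swap perturbation and evaluate the expectation using \Cref{LEM:sigmaexact}.

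First, since both $S$ and $T$ are balanced, $|S|=|T|=m=p/2$, so \Cref{LEM:partfunc} gives $Z_{\alpha,\beta}(T)=Z_{\alpha,\beta}(S)$. Hence $\log(\p_T(\sigma)/\p_S(\sigma))$ reduces to the Hamiltonian difference $H_S(\sigma)-H_T(\sigma)$, and $\KL(\p_T,\p_S)=\E_T[H_S-H_T]$, where $H_S$ and $H_T$ denote the IBM Hamiltonians for partitions $S$ and $T$.

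Second, I parameterize the swap: write $\{i\}=S\setminus T$ and $\{j\}=T\setminus S$, so $i\in S\cap\bar T$ and $j\in\bar S\cap T$. Using the matrix form $H_S(\sigma)=-\frac{1}{2}\sigma^\top Q_S\sigma$ introduced in Section~\ref{sub:mle}, the key observation is that $Q_S-Q_T$ is supported only on rows and columns indexed by $i$ or $j$, except for the $(i,j)$ and $(j,i)$ entries, which do not change under the swap. Tracking which off-diagonal entries flip from $\sim$ to $\nsim$ and vice versa (each flip contributes $\pm(\beta-\alpha)/p$) gives
\begin{equation*}
H_S(\sigma)-H_T(\sigma) \;=\; -\frac{\beta-\alpha}{p}(\sigma_i-\sigma_j)\bigl[(m\mu_S-\sigma_i)-(m\mu_{\bar S}-\sigma_j)\bigr].
\end{equation*}

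Third, to evaluate the expectation under $\p_T$ I re-express the above in terms of the local magnetizations $\mu_T,\mu_{\bar T}$ that are natural for $\p_T$. Since $\bone_S=\bone_T+e_i-e_j$, a short calculation gives $\mu_S-\mu_{\bar S}=(\mu_T-\mu_{\bar T})+2(\sigma_i-\sigma_j)/m$, and substituting yields
\begin{equation*}
H_S-H_T \;=\; -\frac{\beta-\alpha}{p}\bigl[m(\sigma_i-\sigma_j)(\mu_T-\mu_{\bar T})+(\sigma_i-\sigma_j)^2\bigr].
\end{equation*}
Applying \Cref{LEM:sigmaexact} to the partition $(T,\bar T)$ with $i\in\bar T$ and $j\in T$, the covariance $\E_T[\sigma_i\sigma_k]$ equals $\Delta$ for $k\in\bar T\setminus\{i\}$ and $\Omega$ for $k\in T$, with the symmetric statement for $\sigma_j$. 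Summing these contributions over the two blocks produces $\E_T[m(\sigma_i-\sigma_j)(\mu_T-\mu_{\bar T})+(\sigma_i-\sigma_j)^2]=-2(m-1)(\Delta-\Omega)$, and inserting this with $2(m-1)=p-2$ yields the claimed formula.

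The main obstacle is the bookkeeping in the second step: one must carefully tabulate, for each of the $4(m-1)$ pairs incident to $i$ or $j$ (other than $(i,j)$ itself), whether the swap flips its same-block status toward $\sim$ or toward $\nsim$, so that the signs in $Q_S-Q_T$ collect into the clean factorization $(\sigma_i-\sigma_j)\bigl[\sum_{k\in S\setminus\{i\}}\sigma_k-\sum_{k\in\bar S\setminus\{j\}}\sigma_k\bigr]$. Once that identity is in hand, the remaining steps are routine algebra and a direct application of the covariance identities of \Cref{LEM:sigmaexact}, requiring no further probabilistic input.
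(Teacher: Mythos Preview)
Your proof is correct and follows essentially the same approach as the paper: reduce the KL to the expectation of a Hamiltonian difference via \Cref{LEM:partfunc}, tabulate the entries of $Q_S-Q_T$ affected by the single swap, and evaluate using the block structure of $\Sigma_T$ from \Cref{LEM:sigmaexact}. The paper organizes the computation slightly more directly as $\KL(\p_T,\p_S)=\tfrac{1}{2}\tr[(Q_T-Q_S)\Sigma_T]$ and matches each nonzero entry of $Q_T-Q_S$ with the corresponding $\Delta$ or $\Omega$ in $\Sigma_T$, thereby avoiding your detour through the local magnetizations $\mu_S,\mu_{\bar S}\to\mu_T,\mu_{\bar T}$; but this is a cosmetic difference, not a conceptual one.
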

\begin{proof}
By definition of the divergence and of the distributions, we have that
\begin{align*}
\KL(\p_T,\p_S) &= \E_{T}\Big[\log\Big(\frac{\p_T}{\p_S}(\sigma)\Big)\Big]\\
&= \E_{T}\big[\tr[(Q_T-Q_S) \sigma \sigma^\top] \big]\\
&= \tr[(Q_T-Q_S) \Sigma_T] 
\end{align*}
Note that most of the coefficients of $Q_T-Q_S$ are equal to 0. In fact, noting $\{s\} = S\cap \bar T$ and $\{t\}= \bar S \cap T$, we have
\[
(Q_T - Q_S)_{ij} = \frac{\alpha -\beta}{p} \quad \text{if} \quad
\begin{cases} 
	&i \in S \setminus \{s\}\,, \, j=s\\
       &i = s\, ,\, j \in S \setminus \{s\}\\
	& i \in \bar S \setminus \{t\}\,, \, j=t\\
	&i = t\, ,\, j \in \bar S \setminus \{t\}\, 
\end{cases}
\]
and
\[
(Q_T - Q_S)_{ij} = \frac{\beta-\alpha}{p} \quad \text{if} \quad
\begin{cases} 
	&i \in S \setminus \{s\}\,, \, j=t\\
       &i = s\, ,\, j \in \bar S \setminus \{t\}\\
	& i \in \bar S \setminus \{t\}\,, \, j=s\\
	&i = t\, ,\, j \in S \setminus \{s\}\, ,
\end{cases}
\]
and $0$ otherwise. There are therefore $p-2$ coefficients of each sign. Furthermore, whenever $(Q_T - Q_S)_{ij} = (\alpha-\beta)/p$, we have $(\Sigma_T)_{ij}=\Omega$, and whenever $(Q_T - Q_S)_{ij} = (\beta-\alpha)/p$, we have $(\Sigma_T)_{ij}=\Delta$. Computing $\tr[(Q_T-Q_S) \Sigma_T]$ explicitly yields the desired result.
\end{proof}

From this lemma, we derive the following lower bound.
\begin{theorem}
\label{thm:lb}
For $\gamma \in (0,3/5)$ and $p \ge 6$ and
\[
n \le \frac{\gamma \log(p/4)}{(\beta-\alpha)(\Delta-\Omega)}.
\]
We have
\[
\inf_{\hat S} \max_{S \in \cS} \p^{\otimes n}_S\big((\hat S, \bar{\hat S})\not\asymp  (S, \bar{S})) \ge  \frac{p-2}{p}\big(1- \gamma - \sqrt{\gamma}\big)>0\, ,
\]
where the infimum is taken over all estimators of $S$. Note that the right-hand side of the above inequality goes to 1 as $p \to \infty$ and $\gamma \to 0$.
\end{theorem}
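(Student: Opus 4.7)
My plan is to establish the lower bound via the classical reduction to multiple hypothesis testing, followed by an application of Fano's inequality whose key ingredient is the KL bound from Lemma~\ref{LEM:klsingle}. This is the standard route for minimax lower bounds in partition recovery problems, and the $(p-2)/p$ factor in the statement strongly suggests that the KL formula of Lemma~\ref{LEM:klsingle} is being used almost verbatim.

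First, I would fix an arbitrary balanced partition $S$ and consider a well-chosen subfamily $\mathcal{F}\subseteq\{S\}\cup\mathcal{T}_S$ of ``test'' partitions close to $S$. Since exact recovery is at least as hard as distinguishing the hypotheses $\{\p_T^{\otimes n}:T\in\mathcal{F}\}$, any minimax lower bound on testing transfers to recovery, and the maximum over $\cS$ dominates the average over $\mathcal{F}$. By Lemma~\ref{LEM:klsingle} and tensorization, for every $T\in\mathcal{T}_S$,
\[
\KL\bigl(\p_T^{\otimes n},\,\p_S^{\otimes n}\bigr)\;=\;n\cdot\tfrac{p-2}{p}(\beta-\alpha)(\Delta-\Omega)\;\le\;\gamma\log(p/4)\cdot\tfrac{p-2}{p},
\]
where the inequality uses the hypothesis on $n$. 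The factor $(p-2)/p$ in the final bound is inherited from this display.

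Second, I would apply Fano's inequality with a uniform prior on $\mathcal{F}$:
\[
\inf_{\hat S}\frac{1}{|\mathcal{F}|}\sum_{T\in\mathcal{F}}\p_T^{\otimes n}\bigl(\hat S\not\asymp T\bigr)\;\ge\;1-\frac{I+\log 2}{\log|\mathcal{F}|},
\]
bounding the mutual information by $I\le\max_{T\in\mathcal{F}}\KL(\p_T^{\otimes n},\p_S^{\otimes n})$ via convexity. A natural choice of $\mathcal{F}$ is the collection of partitions obtained from $S$ by swaps across a fixed bijection between $S$ and $\bar S$: this yields $|\mathcal{F}|$ of order $p$, pairwise KLs among elements of $\mathcal{F}$ that remain comparable to the single-swap KL against $\p_S$ (two disjoint swaps give nearly additive KL contributions, by an argument parallel to Lemma~\ref{LEM:klsingle}), and $\log|\mathcal{F}|$ matching $\log(p/4)$ up to absolute constants. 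Plugging in the bound from the previous step yields a leading term of the form $1-\gamma\cdot\tfrac{p-2}{p}$, which is of the correct order.

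The main obstacle will be sharpening the plain Fano bound to recover the precise form $\tfrac{p-2}{p}(1-\gamma-\sqrt{\gamma})$, rather than the cruder $1-\gamma\tfrac{p-2}{p}-o(1)$. The $\sqrt{\gamma}$ term is characteristic of a Tsybakov-style refinement of Fano's inequality that combines the entropy bound with a Pinsker correction and yields prefactors of the form $\tfrac{\sqrt{M}}{1+\sqrt{M}}$; matching $\tfrac{\sqrt{M}}{1+\sqrt{M}}=\tfrac{p-2}{p}$ corresponds to $M=(p-2)^2/4\asymp|\mathcal{T}_S|$, which suggests that the proof in fact uses essentially the entirety of $\mathcal{T}_S$ as the hypothesis family rather than a sparser matching subfamily. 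A secondary subtlety is the $S\asymp\bar S$ equivalence: since this symmetry acts uniformly across $\mathcal{F}$, it does not alter the rate but must be accounted for when identifying the effective number of distinguishable hypotheses.
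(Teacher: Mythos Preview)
Your final paragraph lands on exactly the paper's argument: the hypothesis family is the full $\mathcal{T}_S$ (with $|\mathcal{T}_S|=(p/2-1)^2$), the reference measure is $\p_S^{\otimes n}$, and the bound is Tsybakov's refinement of Fano (Theorem~2.5 in \cite{Tsy09}), whose prefactor $\sqrt{|\mathcal{T}_S|}/(1+\sqrt{|\mathcal{T}_S|})=(p-2)/p$ produces the constant you were trying to match. So the overall approach is correct.

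Two points of confusion are worth clearing up. First, the $(p-2)/p$ in the final bound does \emph{not} come from the $(p-2)/p$ in Lemma~\ref{LEM:klsingle}; the paper simply bounds that factor by~$1$, obtaining
\[
\KL(\p_T^{\otimes n},\p_S^{\otimes n})\le n(\beta-\alpha)(\Delta-\Omega)\le\gamma\log(p/4)\le\tfrac{\gamma}{2}\log|\mathcal{T}_S|,
\]
and the $(p-2)/p$ arises purely from the $\sqrt{M}/(1+\sqrt{M})$ prefactor, as you yourself observe at the end. Your two explanations for this factor are therefore inconsistent, and only the second is right. Second, the detour through plain Fano with a sparse matching subfamily and pairwise KL control is unnecessary and would not deliver the stated constants: Tsybakov's Theorem~2.5 requires only the divergences $\KL(\p_T^{\otimes n},\p_S^{\otimes n})$ against the single center $\p_S$, not pairwise divergences among the $T$'s, so one works directly with all of $\mathcal{T}_S$ without any thinning. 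Once you feed $\alpha=\gamma/2$ into Tsybakov's bound you get $1-\gamma-\sqrt{\gamma/\log|\mathcal{T}_S|}\ge 1-\gamma-\sqrt{\gamma}$ (using $|\mathcal{T}_S|\ge e$ for $p\ge 6$), and the proof is complete.
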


\begin{proof}
First, note that by~\Cref{LEM:klsingle}, for any $T \in \cT_S$, it holds $|\cT_S| = (p/2-1)^2$ so that
\[
\KL(\p_T^{\otimes n},\p^{\otimes n}_S) = n \KL(\p_T,\p_S) \le n (\beta-\alpha)(\Delta-\Omega) \le  \gamma \log(p/4) \le \frac{\gamma}{2} \log |\cT_S|\,.
\]
Thus  Theorem~2.5 in~\cite{Tsy09} yields
\begin{align*}
\inf_{\hat S} \max_{S \in \cP} \p^{\otimes n}_S(\hat S\not\asymp  S) &\ge \frac{\sqrt{|\cT_S|}}{1+\sqrt{|\cT_S|}} \Big(1- \gamma - \sqrt{\frac{\gamma}{\log(|\cT_S|)}} \Big)\\
&\ge \frac{p-2}{p}\big(1- \gamma - \sqrt{\gamma}\big)>0\,,
\end{align*}

for $\gamma \in (0,3/5)$.
\end{proof}

The lower bound of \Cref{thm:lb} matches the upper bounds of \Cref{thm:sdp} up to numerical constant. This indicates that the SDP relaxation studied in the paper is rate optimal: the sample complexity stated in \Cref{cor:sdp} has optimal dependence on the dimension $p$. Note that past work on exact recovery in the stochastic blockmodel~\citep{AbbBanHal16, HajWuXu16} was able to show that SDP was also optimal with respect to constants. We do not pursue this questions in the present paper.

\section{Conclusion and open problems}
\label{SEC:conclusion}

This paper introduces the Ising block model (IBM) for large binary random vectors with an underlying cluster structure. In this model, we studied the sample complexity of recovering exactly the clusters. Unsurprisingly, this paper bears similarities with the stochastic blockmodel, but also differences. For example, in the stochastic blockmodel one is given only one observation of the graph. In the IBM, given one realization $\sigma^{(1)} \in \dhyp$, the maximum likelihood estimator is the trivial clustering that assigns $i \in [p]$ to a cluster according to the sign of $\sigma^{(1)}_i$, up to a trivial reassignment to keep the partition balanced. \\

Below is a summary of our main findings:
\begin{enumerate}
\item The model exhibits three phases depending on the values taken by two parameters.
\item In one phase, where the two clusters tend to have opposite behavior, the sample complexity is logarithmic in the dimension;
 in the other two, it is near linear. These sample complexities are proved to be optimal in an information theoretic sense.
\item Akin to the stochastic blockmodel, the optimal sample complexity is achieved using the natural semidefinite relaxation to the {\sf MAXCUT} problem.
\end{enumerate}

Many questions regarding this model remain open. The first and most natural is the determination of exact constants. \Cref{thm:lb} suggests that there exists a universal constant $C^\star$ such that the optimal sample complexity is $$\frac{C^\star\log(p)}{(\beta-\alpha)(\Delta-\Omega)}(1+o_p(1))\,.$$
Throughout this paper, we have only kept loosely track of the correct dependency of the constants as function of the constants $(\alpha, \beta)$. We have shown that the optimal sample complexity is a product of $\log(p)/(\Delta-\Omega)$ and of a constant term that only becomes arbitrarily large when $\alpha$ is very close to $\beta$, with a divergence of order $(\beta-\alpha)^{-1}$, which is consistent with our lower bound. In the spirit of exact thresholds for the stochastic blockmodel~\citep{Mas14,  MosNeeSly15,AbbBanHal16}, we find that proving existence of the constant $C^\star$ and computing it worthy of investigation but is beyond the scope of the present paper.

Another possible development is the extension of this model to settings with multiple blocks, possibly of unbalanced sizes. This has been studied in the case of the stochastic blockmodel for graphs in the sparse case \citep{AbbSan15,BanMoo16} and in the dense case \citep{RohChaYu11,GaoMaZha15,GaoMaZha16}. For the  Ising blockmodel, the main challenge is that the population covariance matrix cannot be directly computed from the parameters of the problem, and an analysis of the ground states of the free energy is required. Developing a general approach to this task, rather than having to do an ad hoc analysis for each case would be an important step in this direction.

We have only analyzed in this work the performance of the semidefinite positive relaxation of the maximum likelihood problem, but other methods can be considered for total or partial recovery. In related problems, belief propagation is used to recover communities \citep[see e.g.][and work cited above]{MosNeeSly14,MoiPerWei15,AbbSan16,AbbSan16b,LesKrzZde17}. In particular, \cite{LesKrzZde17} covers Hopfield models, which are a generalization of our model. Another possible venue is the use of greedy random algorithms, which have been used to find local solutions of {\sf MAXCUT} in \cite{AngBubPer16}. It is possible that studying these types of algorithms is necessary in order to obtain sharper rates.

\medskip

Finally, in view of the simple spectral decomposition~\eqref{EQ:Gamma} of $\Gamma$, one may wonder about the behavior of the a simple method that consists in computing the leading eigenvector of $\hat \Gamma$ and clustering according to the sign of its entries. Such a method is the basis of the approach in denser graph models in \cite{McS01} or \cite{AloKriSud98}. The results of such an approach are easily implementable as follows. 

Let $\hat u$ denote a leading unit eigenvectors of $\hat \Gamma$ and consider the following estimate for the partition $(S, \bar S)$:
\begin{equation}
\label{EQ:spectral}
\hat S \asymp  \{ i \in [p] \; | \; \hat u_i >0 \}\, .
\end{equation}
It follows from the Perron-Frobenius theorem   that  $\hat S \asymp S$ whenever $\mathrm{sign}(\hat \Gamma)=\mathrm{sign}(\Gamma)$.
This allows for perfect recovery of $S$, but only holds with high probability when $n$ is of order $\log(p)/(\Delta-\Omega)^2$, which is suboptimal. It is however possible to obtain partial recovery guarantees for the spectral recovery. In order to state our result, for any two partitions $(S, \bar S)$, $(T, \bar T)$ define
$$
|S\Diamond T|=\min \Big( |S \bigtriangleup T | ,  |S \bigtriangleup \bar T |\Big)
$$
where $\bigtriangleup$ denotes the symmetric difference.
\begin{proposition}
\label{PRO:davkah}
Fix $\delta \in (0,1)$ and let $\hat S \subset [p]$ be defined in~\eqref{EQ:spectral}. Then, there exits a constant $\gamma_{\alpha, \beta}>0$ such that with probability $1-\delta$,
\[
\frac{1}{p}|S\Diamond \hat S|\le \gamma_{\alpha,\beta} \, \frac{\log(4p/\delta)}{n(\Delta-\Omega)}\, .
\]
\end{proposition}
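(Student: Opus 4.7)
The statement follows from a standard Davis--Kahan style argument applied to the rank-one ``signal'' part of $\Gamma$, together with the operator norm concentration already established in \Cref{lem:gammanorms}. The only nontrivial step is to convert an $\ell_2$ bound on the leading eigenvector into a count of misclassified coordinates, which is easy here because the true eigenvector has entries $\pm 1/\sqrt{p}$.

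\textbf{Step 1 (spectral structure of $\Gamma$).} The decomposition~\eqref{EQ:Gamma} shows that the top eigenvalue of $\Gamma$ is $\lambda_1=(1-\Delta)+p(\Delta-\Omega)/2$ with eigenvector $u_S=v_S/\sqrt p$, while the second eigenvalue is $\lambda_2=1-\Delta$. The spectral gap is therefore $\lambda_1-\lambda_2=p(\Delta-\Omega)/2$.

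\textbf{Step 2 (Davis--Kahan).} Applying the $\sin\Theta$ theorem to $\Gamma$ and $\hat\Gamma$ yields, on the event that the noise is smaller than the gap,
\begin{equation*}
\min_{s\in\{-1,1\}}\|\hat u - s\,u_S\|_2 \;\le\; \frac{2\sqrt 2\,\|\hat\Gamma-\Gamma\|_{\mathrm{op}}}{\lambda_1-\lambda_2} \;\le\; \frac{C\,\|\hat\Gamma-\Gamma\|_{\mathrm{op}}}{p(\Delta-\Omega)}.
\end{equation*}

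\textbf{Step 3 (plug in concentration).} \Cref{lem:gammanorms} gives, with probability at least $1-\delta$ and in the regime where the Bernstein $\sqrt{\cdot}$ term dominates $\cR_{n,p}(\delta)$ (which is the only regime where the conclusion of the proposition is nonvacuous),
\begin{equation*}
\|\hat\Gamma-\Gamma\|_{\mathrm{op}} \;\le\; c_{\alpha,\beta}\, p\sqrt{\frac{(\Delta-\Omega)\log(4p/\delta)}{n}}.
\end{equation*}
Combining with Step 2, and choosing $s\in\{-1,1\}$ optimally,
\begin{equation*}
\|\hat u - s\,u_S\|_2^2 \;\le\; \gamma_{\alpha,\beta}\,\frac{\log(4p/\delta)}{n(\Delta-\Omega)}.
\end{equation*}

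\textbf{Step 4 (sign agreement implies partition agreement).} Since $u_S$ has entries $\pm 1/\sqrt p$, any coordinate $i$ with $\mathrm{sign}(\hat u_i)\neq \mathrm{sign}(s\,u_{S,i})$ contributes at least $1/p$ to $\|\hat u - s\,u_S\|_2^2$. Hence, if $\hat S$ is the partition defined in~\eqref{EQ:spectral}, the optimal choice of $s$ produces the right alignment of $\hat S$ with $(S,\bar S)$ and
\begin{equation*}
\frac{1}{p}\,|S\Diamond \hat S| \;\le\; \min_{s\in\{-1,1\}}\|\hat u - s\,u_S\|_2^2 \;\le\; \gamma_{\alpha,\beta}\,\frac{\log(4p/\delta)}{n(\Delta-\Omega)}.
\end{equation*}

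\textbf{Main obstacle.} There is no deep obstacle: the computation is essentially routine once one has the operator-norm bound of \Cref{lem:gammanorms} and the explicit spectral decomposition~\eqref{EQ:Gamma}. The mild technical points are (i) controlling the sign ambiguity of $\hat u$, handled by the minimum over $s\in\{-1,1\}$ that is natural since $(\hat S,\bar{\hat S})$ is unordered, and (ii) justifying that Davis--Kahan applies by checking that $\|\hat\Gamma-\Gamma\|_{\mathrm{op}}$ is smaller than the gap $p(\Delta-\Omega)/2$---outside this regime the stated bound is $\ge 1$ and hence trivially true.
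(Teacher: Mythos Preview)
Your proposal is correct and follows essentially the same route as the paper: the paper also bounds $|S\Diamond\hat S|$ by $p\min_{s}\|\hat u - s\,u_S\|_2^2$ via the $\pm 1/\sqrt p$ entries of $u_S$, applies Davis--Kahan with the eigengap $p(\Delta-\Omega)/2$, and then plugs in the operator-norm bound~\eqref{EQ:normop}. Your explicit remark that only the $\sqrt{\cdot}$ regime of $\cR_{n,p}(\delta)$ matters (the other regime makes the conclusion vacuous) is a point the paper leaves implicit.
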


\begin{proof}
Let $\hat u$ denote the leading unit eigenvector of $\hat \Gamma$ and let $\hat v=\sqrt{p}\hat u$. Recall that $v_S=\bone_S-\bone_{\bar S}$ and observe that
\begin{align*}
|S\Diamond \hat S|&=\min\Big(\sum_{i=1}^p \1(\hat v_i \cdot (v_S)_i\le 0)  ,\sum_{i=1}^p \1(\hat v_i \cdot (v_S)_i\ge 0)\Big)\\
&\le \min\big(\|\hat v - v_S\|^2, \|\hat v+ v_S\|^2\big)=p\min\big(\|\hat u - u_S\|^2, \|\hat u + u_S\|^2\big)\,,
\end{align*}
where in the inequality, we used the fact that $v_S \in \dhyp$ so that 
$$
\1(\hat v_i \cdot (v_S)_i\le 0)\le |\hat v_i - (v_S)_i|\1(\hat v_i \cdot (v_S)_i\le 0) \le |\hat v_i - (v_S)_i|^2\,.
$$
Using a variant of the Davis-Kahan lemma (see, e.g, \cite{WanBerSam16}), we get
$$
\frac{1}{p}|S\Diamond \hat S| \le \frac{ \|\hat \Gamma - \Gamma \|^2_{\mathrm{op}}}{(\lambda_1(\Gamma) - \lambda_2(\Gamma))^2}\,,
$$
and the result follows readily from~\eqref{EQ:normop} and the fact that the eigengap of $\Gamma$ is given by $p(\Delta-\Omega)/2$.
\end{proof}

In terms of exact recovery, this result is quite weak as it only gives guarantees for a sample complexity of the order of $p\log(p/\delta)/(\Delta-\Omega)$, which is suboptimal by a factor of~$p$. Moreover, for the bound of \Cref{PRO:davkah} to be non-trivial, one already needs the sample size to be of the same order as the one required for exact recovery by semi-definite programming. 
 Nevertheless \Cref{PRO:davkah} raises the question of the optimal rates of estimation of $S$ with respect to the metric $|S\Diamond \hat S|/p$. While partial recovery is beyond the scope of this paper, it would be interesting to establish the optimal rate.
 
\section*{Acknowledgements} 
 
P.R. Thanks Andrea Montanari for pointing out a connection to the Hopfield model.

\appendix

\section{Facts about the Curie-Weiss model}
\label{SUB:CW}
We begin by stating some well known facts about the Curie-Weiss model. These results are standard in the statistical physics literature and the interested reader can find more details in~\cite{FriVel16, Ell06} for example. However, the precise behavior of the free energy that we need for our subsequent analysis does not seem to be readily available in the literature so we prove below a lemma that suits our purposes.

Recall that the Curie-Weiss model is a special case of the Ising block model when $\alpha=\beta=b$. In this case, the free energy takes the form:
\begin{equation}
\label{EQ:freeNRJCW}
\gCW_b(\mu)=-2b \mu^2- 4h\big(\frac{\mu+1}{2}\big)
\end{equation}
where we recall that $\mu=\sigma^\top \bone/p$ is the global magnetization of $\sigma$. The minima $x \in (-1,1)$ of $g$ are called ground states and satisfy the first order optimality condition, also known as \emph{mean field equation}  
$$
\log\big(\frac{1+x}{1-x}\big) =2b x\,.
$$

If $b\le 1$, then the unique solution to the mean field equation
is $x=0$. Moreover, $\gCW_b$ is increasing on $[0,1]$.

 If $b>1$, then the mean field equation has two solutions
$\tilde x>0$ and $-\tilde  x$ in $(-1,1)$. In any case, these
solutions are global minima that are also the only local minima of
$\gCW_b$.  In particular, when $b > 1$, $\gCW_b$ is monotone decreasing in
the interval $(0, \tilde x)$ and monotone increasing in the interval
$(\tilde x, 1)$.

The following lemma is a refinement of these well-known facts that quantifies the curvature of $\gCW_b$ around its minima.

\begin{lemma}
\label{LEM:CW}
Fix $b>1$ in the Curie-Weiss model and denote by $\tilde x>0$ and $-\tilde  x$ the two ground states. Then it holds:
$$
1-\frac{2b}{2b^2+b-1}<\tilde{x}^2<1-e^{-2b}\,.
$$
Moreover, for any  $x \in (0,1)$, it holds
\begin{equation}
\gCW_b(x)\ge \gCW_b(\tilde x) +
\frac{b-1}{2b}(|x-\tilde x|\land \eps)^2,\label{EQ:global-quad}
\end{equation}
and
\begin{equation}
\gCW_b(x)\ge \gCW_b(-\tilde x) +
\frac{b-1}{2b}(|x+\tilde x|\land \eps)^2,\label{EQ:global-quad2}
\end{equation}
where $\eps =\frac{e^{-2b}}{4}\big(1-\frac1b\big)$.

Fix $b\le 1$ in the Curie-Weiss model and recall that $\tilde x=0$ is the unique ground state. Then for any $x \in (-1,1)$  it holds
\begin{equation}
\label{EQ:global-quad3}
\gCW_b(x)\ge \gCW_b(0) + (1-b)(x\wedge \eps')^2\,.
\end{equation}
where 
$$
\eps'= \sqrt{\frac{1-b}{3}}\,.
$$
\end{lemma}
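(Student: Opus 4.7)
The plan is to work throughout with the first two derivatives
\[
g'(\mu)=-4b\mu+2\log\frac{1+\mu}{1-\mu},\qquad g''(\mu)=\frac{4}{1-\mu^2}-4b
\]
of $\gCW_b$, together with the observation that $g$ is even (since $h(s)=h(1-s)$) and that $g''$ is strictly increasing in $|\mu|$ on $(-1,1)$. The first-order condition rewrites as $\mu=\tanh(b\mu)$, and concavity of $\tanh$ on $(0,\infty)$ combined with $\tanh'(0)=1<b$ yields a unique positive root $\tilde{x}$ for $b>1$.

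For the bounds on $\tilde{x}^2$ in the supercritical regime, substituting $\tilde{x}=\tanh(b\tilde{x})$ gives the identity $\cosh^2(b\tilde{x})(1-\tilde{x}^2)=1$. Since $\tilde{x}<1$ and $\cosh$ is increasing on $[0,\infty)$, $\cosh(b\tilde{x})<\cosh b<e^b$, so $1-\tilde{x}^2>e^{-2b}$, which is the upper bound $\tilde{x}^2<1-e^{-2b}$. For the lower bound I would use the identity $\tanh^{-1}(x)=x+\int_0^x \frac{t^2}{1-t^2}\,dt$, which turns the mean-field equation $\tanh^{-1}(\tilde x)=b\tilde x$ into $(b-1)\tilde{x}=\int_0^{\tilde{x}}\frac{t^2}{1-t^2}\,dt$. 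Because the integrand is strictly convex on $[0,1)$, the Hermite--Hadamard (secant-line) upper bound gives $\int_0^{\tilde{x}}\frac{t^2}{1-t^2}\,dt\le \frac{\tilde{x}^3}{2(1-\tilde{x}^2)}$; combining and solving for $\tilde x^2$ yields $\tilde{x}^2>\frac{2(b-1)}{2b-1}$, which a short computation shows is strictly larger than $\frac{(2b+1)(b-1)}{(2b-1)(b+1)}=1-\frac{2b}{2b^2+b-1}$ (the ratio of these two numbers is $\frac{2(b+1)}{2b+1}>1$).

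The quadratic bound \eqref{EQ:global-quad} is then obtained by combining these estimates with Taylor's theorem. First, $1-\tilde{x}^2\le \frac{1}{2b-1}$ gives $g''(\tilde{x})\ge 4(b-1)$, far exceeding the target value $(b-1)/b$; because $g''$ is increasing in $|\mu|$, the bound $g''(\mu)\ge (b-1)/b$ is automatic on $[\tilde{x},1)$. For $\mu$ just below $\tilde{x}$, one has $1-\mu^2\le (1-\tilde{x}^2)+2\tilde{x}\eps\le \frac{2b}{2b^2+b-1}+2\eps$, and a routine arithmetic check shows that the stated $\eps=\frac{e^{-2b}}{4}(1-\frac{1}{b})$ keeps this below $\frac{4b}{4b^2+b-1}$, which is exactly the threshold at which $g''=(b-1)/b$. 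Taylor's theorem with integral remainder then produces $g(x)\ge g(\tilde{x})+\frac{b-1}{2b}(x-\tilde{x})^2$ for $|x-\tilde{x}|\le\eps$. For $x\in(0,1)$ outside this neighborhood, strict monotonicity of $g$ on $(0,\tilde{x})$ and on $(\tilde{x},1)$ (which holds because $\tilde{x}$ is the unique critical point of $g$ in $(0,1)$) gives $g(x)>g(\tilde{x}\pm\eps)\ge g(\tilde{x})+\frac{b-1}{2b}\eps^2$. The companion bound \eqref{EQ:global-quad2} then follows from the symmetry $g(\mu)=g(-\mu)$.

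For the subcritical case $b\le 1$, the uniform estimate $g''(\mu)\ge 4(1-b)\ge 0$ on $(-1,1)$ shows that $g$ is globally convex, and Taylor's theorem with $g'(0)=0$ yields the even stronger global bound $g(x)-g(0)\ge 2(1-b)x^2$, from which \eqref{EQ:global-quad3} follows trivially (the claimed bound is weaker both when $|x|\le\eps'$ and when $|x|>\eps'$, since in the latter case $2x^2>2(\eps')^2>(\eps')^2$). The main obstacle in the whole argument is pinning down the specific $\eps$ in the supercritical case: the arithmetic inequality $\frac{2b}{2b^2+b-1}+2\eps\le \frac{4b}{4b^2+b-1}$ with $\eps=\frac{e^{-2b}}{4}(1-\frac{1}{b})$ is elementary but demands careful bookkeeping of constants.
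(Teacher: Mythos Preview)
Your proof is correct and takes a genuinely different route from the paper's. A brief comparison:

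\textbf{Bounds on $\tilde x^2$.} The paper uses the truncated series $\log\frac{1+x}{1-x}<\frac{2x}{1-x^2}-\gamma x^3$ (for $\gamma\le 1$) and iterates: first $\gamma=0$ gives $\tilde x^2>1-1/b$, then feeding that back with $\gamma=1$ yields the stated lower bound. Your integral identity $\tanh^{-1}(x)=x+\int_0^x\frac{t^2}{1-t^2}\,dt$ together with the Hermite--Hadamard upper bound is more direct and actually produces the \emph{sharper} estimate $\tilde x^2>\frac{2(b-1)}{2b-1}$, which strictly dominates the lemma's claim. For the upper bound the paper bounds $\log\frac{1+\tilde x}{1-\tilde x}>-\log(1-\tilde x)$ to get $\tilde x<1-e^{-2b}$; your $\cosh^2(b\tilde x)(1-\tilde x^2)=1$ argument is equivalent in spirit.

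\textbf{Quadratic lower bound, $b>1$.} The paper expands $\gCW_b$ to third order and controls the Lagrange remainder via $|g'''|\le \frac{8}{(1-(\tilde x+\eps)^2)^2}$, then chooses $\eps$ so the cubic term eats at most half of the quadratic term. You instead bound $g''$ \emph{pointwise} on $[\tilde x-\eps,1)$, using monotonicity of $g''$ in $|\mu|$ together with your sharper bound $1-\tilde x^2\le\frac{1}{2b-1}$. This is conceptually cleaner; the price is the arithmetic inequality $\frac{2b}{2b^2+b-1}+2\eps\le\frac{4b}{4b^2+b-1}$, which you flag but do not verify. It does hold: the gap on the right equals $\frac{2b(b-1)}{(4b^2+b-1)(2b^2+b-1)}$, and since $(4b^2+b-1)(2b^2+b-1)\le 15b^4$ for $b\ge 1$ it suffices that $b^2e^{-2b}\le 4/15$, which is immediate because $b^2e^{-2b}$ is decreasing on $[1,\infty)$ with value $e^{-2}\approx 0.135$ at $b=1$. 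You should include this one-line check.

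\textbf{Subcritical case.} Here your argument is strictly simpler: $g''(\mu)\ge 4(1-b)$ everywhere, so $\gCW_b$ is globally convex and Taylor at $0$ gives $g(x)-g(0)\ge 2(1-b)x^2$, which dominates the claimed bound for all $x$. The paper unnecessarily runs the same third-order expansion as in the supercritical case.

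Both approaches use the same monotonicity argument to pass from the local quadratic bound to the global $(|x-\tilde x|\wedge\eps)^2$ form, and both derive \eqref{EQ:global-quad2} from \eqref{EQ:global-quad} by the evenness of $\gCW_b$.
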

\begin{proof}
Observe that for $x > 0$, we have 
\begin{equation}
\label{EQ:approxlog}
2b \tilde x=\log\big(\frac{1+\tilde x}{1-\tilde x}\big) <\frac{2\tilde x}{1-\tilde{x}^2}-\gamma \tilde{x}^3\,, \quad \forall \gamma \le 1\,.
\end{equation}
Taking $\gamma=0$ implies that $\tilde x>\sqrt{1-1/b}$. Plugging this into~\eqref{EQ:approxlog} with $\gamma=1$ yields
$$
2b \tilde x < \frac{2\tilde x}{1-\tilde{x}^2}- \tilde{x} \big(1-\frac{1}{b}\big)\,.
$$
Solving for $\tilde x$ once again yields
\begin{equation}
\label{EQ:lbspontmag}
\frac{2}{1-\tilde{x}^2} >2b + 1-\frac{1}{b}
\end{equation}
Or equivalently that 
$$
\tilde{x}^2>1-\frac{2b}{2b^2+b-1}\,.
$$
Moreover, the mean field equation yields 
$$
2b > 2b \tilde x=\log\big(\frac{1+\tilde x}{1-\tilde x}\big) >-\log(1-\tilde x)
$$
so that  
\begin{equation}
\label{EQ:ubspontmag}
\tilde x < 1-e^{-2b}
\end{equation}
which readily yields the desired upper bound on $\tilde x^2$.

We conclude this proof by showing that $\gCW_b$ is at least quadratic in a neighborhood of its minima when $b \neq 1$. To that end, observe first that the second and third derivatives of $g$ are given respectively by
$$
\frac{\partial^2}{\partial x^2}\gCW_b(x)=-4b +\frac{4}{1-x^2}\,, \qquad \frac{\partial^3}{\partial x^3}\gCW_b(x)=-\frac{8x}{(1-x^2)^2}\,,
$$

First assume that $b>1$.  A Taylor expansion of $\gCW_b$
around $\tilde{x}$ together with~\eqref{EQ:lbspontmag}
and~\eqref{EQ:ubspontmag} yields that for any $\eps \in (0,1)$ and
$x$ such that 
$$
|x-\tilde x|\leq \eps:=\frac{e^{-2b}}{2}\wedge \big(1-\frac1b\big)\,,
$$
\begin{align*}
\gCW_b(x)&\ge \gCW_b(\tilde x) + \big(1-\frac{1}{b}\big)(x-\tilde x)^2 -\frac{4}{3(1-(\tilde x + \eps)^2)^2}|x-\tilde x|^3\\
&\ge \gCW_b(\tilde x) + \big(1-\frac{1}{b}\big)(x-\tilde x)^2  -\frac{4}{3(1-\tilde x-  \eps)}|x-\tilde x|^3\\
&\ge \gCW_b(\tilde x) + \big(1-\frac{1}{b}\big)(x-\tilde x)^2  -\frac{4\eps}{3(e^{-2b}-  \eps)}(x-\tilde x)^2\\
&\ge \gCW_b(\tilde x) + \frac12\big(1-\frac{1}{b}\big)(x-\tilde x)^2 \,.
\end{align*}

Now, using the fact that $\gCW_b$ is monotone decreasing on $(0,
\tilde{x} - \eps)$ and monotone increasing in $(\tilde{x} +
\eps, 1)$, we obtain the claim in \eqref{EQ:global-quad}. The lower bound~\eqref{EQ:global-quad2} follows by symmetry.

Next, assume that $b<1$.  A Taylor expansion of $\gCW_b$ around  $0$ yields that for any $x$ such that $|x|<\eps', \eps' \in (0,1)$,
\begin{align*}
\gCW_b(x)&> \gCW_b(0) + \big[2(1-b)-\frac{4\eps^2}{3(1-\eps^2)^2}\big]x^2 \\
&\ge \gCW_b(0) + (1-b)x^2
\end{align*}
for  
$$
\eps' \le \sqrt{\frac{1-b}{3}}\,.
$$
Using the fact that $\gCW_b$ is monotone decreasing on $[1, -\eps)$ and monotone increasing on $(\eps, 1]$  yields~\eqref{EQ:global-quad3}.
\end{proof}
\begin{remark}
When $b=1$, the Hessian of $\gCW_b$ vanishes at $0$. In this case, $\gCW_b$ is not lower bounded by a quadratic term.
\end{remark}

\section{Inequalities}
\label{sec:some-inequalities}

\subsection{Bounds on binomial coefficients}
\label{sec:bounds-binom-coeff}
We need the following well known information theoretic
estimate. Recall that the binary entropy function $h:[0,1] \to \R$ is defined by $h(0)=h(1)=0$ and for any $s \in (0,1)$ by
$$
h(s)=-s\log(s) - (1-s)\log(1-s)\,.
$$

\begin{lemma}%
\label{lem:binom-bound}
Let $m$ be a
  positive integer and let $\gamma \in [0, 1]$ be such that $\gamma m$
  is an integer.  Then $$\binom{m}{\gamma m} \leq
  \exp(mh(\gamma))\,.$$%
\end{lemma}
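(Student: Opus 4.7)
The plan is to use the standard ``single-term bound'' coming from the binomial theorem. Writing $1 = (\gamma + (1-\gamma))^m = \sum_{k=0}^m \binom{m}{k}\gamma^k(1-\gamma)^{m-k}$, every summand on the right-hand side is nonnegative, so in particular the term with $k = \gamma m$ is at most $1$:
\begin{equation*}
\binom{m}{\gamma m}\gamma^{\gamma m}(1-\gamma)^{(1-\gamma)m} \leq 1.
\end{equation*}
Rearranging and taking the logarithm, this is exactly
\begin{equation*}
\log \binom{m}{\gamma m} \leq -\gamma m \log \gamma - (1-\gamma)m \log(1-\gamma) = m h(\gamma),
\end{equation*}
which gives the desired inequality.

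I would also handle the boundary cases $\gamma \in \{0,1\}$ separately (though with the convention $0 \log 0 = 0$ the argument above already covers them, since $\binom{m}{0} = \binom{m}{m} = 1 = \exp(m h(0)) = \exp(m h(1))$). There is no real obstacle: the only subtlety is making sure $0^0$ is interpreted as $1$ in the boundary cases, which is consistent with the paper's convention $h(0) = h(1) = 0$. The whole argument fits in a couple of lines.
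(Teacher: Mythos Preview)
Your proof is correct and is essentially identical to the paper's: the paper simply phrases the same inequality probabilistically, observing that $\p(X=\gamma m)\le 1$ for $X\sim\mathsf{Bin}(m,\gamma)$, which is precisely your single-term bound from the binomial expansion of $(\gamma+(1-\gamma))^m$.
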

\begin{proof}
Let $X\sim \mathsf{Bin}(n,\gamma)$ be a binomial random variable. Then
$$
1\ge \p(X=\gamma m)={m \choose \gamma m}\gamma^{\gamma m}(1-\gamma)^{(1-\gamma)m}={m \choose \gamma m}\exp(-mh(\gamma))\,.
$$

\end{proof}

The following sharper estimate follows from the Stirling approximation of $n!$ developed in~\cite{Rob55}.
\begin{lemma}%
\label{lem:binom-stirling}
  Let $\eps > 0$, $m$ a positive integer let
  $\gamma \in [\eps, 1-\eps]$ be such that $\gamma m$ is an
  integer.  We then have
  \begin{displaymath}
    \exp\inp{-\frac{1}{12\eps^2m}} \leq \sqrt{2\pi m \gamma (1-\gamma)}\exp(mh(\gamma))\binom{m}{\gamma
      m} \leq \exp\inp{\frac{1}{12m}}\,.
  \end{displaymath}
\end{lemma}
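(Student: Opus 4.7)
The plan is to derive both inequalities directly from Robbins' sharpening of Stirling's formula, which states that for every positive integer $n$,
\begin{equation*}
\sqrt{2\pi n}\,(n/e)^n\exp\!\left(\frac{1}{12n+1}\right) \;\leq\; n! \;\leq\; \sqrt{2\pi n}\,(n/e)^n\exp\!\left(\frac{1}{12n}\right).
\end{equation*}
Since the statement is essentially a two-sided tight version of \Cref{lem:binom-bound}, the natural approach is to write $\binom{m}{\gamma m} = m!/((\gamma m)!\,((1-\gamma)m)!)$ and plug in the Robbins bounds into numerator and denominator (upper Robbins for the one we want to bound above, lower Robbins for the one we want to bound below, and vice versa for the reverse inequality).

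For the upper bound on $\binom{m}{\gamma m}$, I would use the upper Robbins bound on $m!$ and the lower Robbins bound on $(\gamma m)!$ and $((1-\gamma)m)!$. After cancellation, the polynomial factors $(m/e)^m/((\gamma m/e)^{\gamma m}((1-\gamma)m/e)^{(1-\gamma)m})$ collapse to $\gamma^{-\gamma m}(1-\gamma)^{-(1-\gamma)m}=\exp(mh(\gamma))$, and the $\sqrt{2\pi\,\cdot}$ factors combine to $1/\sqrt{2\pi m\gamma(1-\gamma)}$. The residual exponential factor is
\begin{equation*}
\exp\!\left(\frac{1}{12m}-\frac{1}{12\gamma m+1}-\frac{1}{12(1-\gamma)m+1}\right),
\end{equation*}
which is bounded above by $\exp(1/(12m))$ since the last two terms are positive. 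Rearranging gives the right-hand inequality of the lemma.

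For the lower bound, I would swap the roles: lower Robbins on $m!$, upper Robbins on the factorials in the denominator. The polynomial and square-root factors collapse identically, and the leftover exponential is
\begin{equation*}
\exp\!\left(\frac{1}{12m+1}-\frac{1}{12\gamma m}-\frac{1}{12(1-\gamma)m}\right)\;\geq\;\exp\!\left(-\frac{1}{12m\gamma(1-\gamma)}\right),
\end{equation*}
after dropping the nonnegative $1/(12m+1)$ term and combining the remaining two over a common denominator. The only thing left is to turn $\gamma(1-\gamma)$ into $\eps^2$. Here I would use the assumption $\gamma\in[\eps,1-\eps]$ (which is nonempty only for $\eps\leq 1/2$), giving $\gamma(1-\gamma)\geq\eps(1-\eps)\geq\eps^2$, so $1/(12m\gamma(1-\gamma))\leq 1/(12\eps^2 m)$, as claimed.

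There is no real obstacle here; the argument is essentially bookkeeping. The one place that requires mild attention is the final step $\eps(1-\eps)\geq\eps^2$, which is where the $\eps^2$ (rather than $\eps$) in the denominator of the lower bound comes from; this is implicitly using that the lemma is meaningful only for $\eps\leq 1/2$. All other steps are routine algebraic manipulations of the Robbins bounds.
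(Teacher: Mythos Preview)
Your proposal is correct and follows essentially the same approach as the paper: apply Robbins' two-sided Stirling bounds to the three factorials in $\binom{m}{\gamma m}=m!/((\gamma m)!\,((1-\gamma)m)!)$ and collect terms. You are in fact more explicit than the paper, which simply says ``applying this yields the desired bounds''; in particular you spell out the last step $\gamma(1-\gamma)\geq\eps(1-\eps)\geq\eps^2$ that produces the $\eps^2$ in the lower bound.
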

\begin{proof}%
 It follows from~\cite{Rob55}) that for any positive integer $n$,
  \begin{displaymath}
    1 \leq \exp\inp{\frac{1}{12n + 1}}  \leq \frac{n!}{\sqrt{2\pi n}(n/e)^n}
    \leq \exp\inp{\frac{1}{12n}}.
 \end{displaymath}
 Applying this to
 $$\binom{m}{\gamma m} = \frac{m!}{(\gamma m)! ((1-\gamma)m)!}$$
 yields the desired bounds.
\end{proof}

\subsection{Tail bound for the $\chi^2$ distribution}
We recall here a well known tail bound for the $\chi^2$ distribution \citep[see][Lemma~1]{LauMas00}.
\begin{lemma}
\label{lem:laumas00}
Let $Z\sim \cN_2(0, I_2)$ be a bivariate standard Gaussian vector. Then, for any $t \ge 2$, it holds
$$
\p(\|Z\|_2^2 -2 \ge 2)\le \exp(-t/4)\,.
$$
\end{lemma}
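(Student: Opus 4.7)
The plan is straightforward: since $Z = (Z_1, Z_2) \sim \cN_2(0, I_2)$, the squared norm $\|Z\|_2^2 = Z_1^2 + Z_2^2$ has a chi-squared distribution with two degrees of freedom, which coincides with an exponential distribution of mean $2$. So I have an explicit closed form for the tail, and the inequality reduces to comparing two exponentials.

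Concretely, interpreting the statement as $\p(\|Z\|_2^2 - 2 \ge t) \le \exp(-t/4)$ (the displayed bound on the left-hand side appears to have a typo, with the constant $2$ in place of $t$), I would proceed as follows. First, I would note the exact formula
\[
\p(\|Z\|_2^2 \ge s) = e^{-s/2}, \qquad s \ge 0,
\]
obtained either by changing to polar coordinates and integrating the joint Gaussian density, or by recognizing $\chi_2^2 \sim \mathrm{Exp}(1/2)$. Applying this with $s = 2 + t$ gives
\[
\p(\|Z\|_2^2 - 2 \ge t) = e^{-1}\, e^{-t/2}.
\]
Second, I would reduce the claim to the elementary inequality $e^{-1}e^{-t/2} \le e^{-t/4}$, equivalently $e^{-1 - t/4} \le 1$, which is trivially true for all $t \ge 0$ (and in particular for $t \ge 2$). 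This completes the proof.

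As an alternative, if one prefers not to use the exact form of the tail, one can carry out a standard Chernoff argument from the moment generating function $\E[\exp(\lambda\|Z\|_2^2)] = (1 - 2\lambda)^{-1}$ (valid for $\lambda < 1/2$), choose $\lambda = 1/4$, and verify that the resulting bound $2 e^{-1/2}\, e^{-t/4}$ is at most $e^{-t/4}$ once $t \ge 2 \log(2 e^{-1/2}) \cdot 4$ — but the direct exponential-tail approach above is cleaner and avoids any optimization. I expect no real obstacle: the result is essentially a one-line computation once the chi-squared random variable with two degrees of freedom is identified with an exponential, and the tail inequality between $e^{-t/2}$ and $e^{-t/4}$ is immediate.
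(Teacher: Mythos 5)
Your proof is correct, and you rightly flag the typo in the displayed statement (the left-hand side should read $\p(\|Z\|_2^2-2\ge t)$, as is confirmed by how the lemma is invoked in the bound on $T_m$ in the proof of the concentration theorem). The paper does not actually prove this lemma: it simply cites Lemma~1 of Laurent and Massart (2000), which is the general deviation inequality $\p(U\ge D+2\sqrt{Dx}+2x)\le e^{-x}$ for $U\sim\chi^2_D$, specialized to $D=2$. Your route is genuinely different and, for this particular case, both more elementary and stronger: by recognizing that $\chi^2_2$ is exactly the exponential distribution with mean $2$, you get the exact tail $\p(\|Z\|_2^2-2\ge t)=e^{-1}e^{-t/2}$, from which the claimed bound follows for every $t\ge 0$ (not just $t\ge 2$) via the trivial inequality $e^{-1-t/2}\le e^{-t/4}$. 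What the citation-based approach buys is generality (it works for any number of degrees of freedom, which matters if one wanted to extend the paper's concentration argument beyond two blocks); what your approach buys is a self-contained one-line proof and a sharper constant. Both are perfectly adequate for the way the lemma is used in the paper.
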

\label{sec:gaussian-integral}


\begin{thebibliography}{43}
\expandafter\ifx\csname natexlab\endcsname\relax\def\natexlab#1{#1}\fi
\expandafter\ifx\csname url\endcsname\relax
  \def\url#1{\texttt{#1}}\fi
\expandafter\ifx\csname urlprefix\endcsname\relax\def\urlprefix{URL }\fi

\bibitem[{Abb\'e et~al.(2016)Abb\'e, Bandeira and Hall}]{AbbBanHal16}
\textsc{Abb\'e, E.}, \textsc{Bandeira, A.~S.} and \textsc{Hall, G.} (2016).
\newblock Exact recovery in the stochastic block model.
\newblock \textit{IEEE Transactions on Information Theory} \textbf{62}
  471--487.

\bibitem[{{Abb\'e} and Sandon(2015)}]{AbbSan15}
\textsc{{Abb\'e}, E.} and \textsc{Sandon, C.} (2015).
\newblock Detection in the stochastic block model with multiple clusters: proof
  of the achievability conjectures, acyclic bp, and the information-computation
  gap.
\newblock \textit{arXiv:1512.09080} .

\bibitem[{{Abb\'e} and Sandon(2016{\natexlab{a}})}]{AbbSan16}
\textsc{{Abb\'e}, E.} and \textsc{Sandon, C.} (2016{\natexlab{a}}).
\newblock Achieving the ks threshold in the general stochastic block model with
  linearized acyclic belief propagation.
\newblock In \textit{Advances in Neural Information Processing Systems 29}
  (D.~D. Lee, M.~Sugiyama, U.~V. Luxburg, I.~Guyon and R.~Garnett, eds.).
  Curran Associates, Inc., 1334--1342.

\bibitem[{{Abb\'e} and Sandon(2016{\natexlab{b}})}]{AbbSan16b}
\textsc{{Abb\'e}, E.} and \textsc{Sandon, C.} (2016{\natexlab{b}}).
\newblock Crossing the ks threshold in the stochastic block model with
  information theory.
\newblock In \textit{2016 IEEE International Symposium on Information Theory
  (ISIT)}.

\bibitem[{Alon et~al.(1998)Alon, Krivelevich and Sudakov}]{AloKriSud98}
\textsc{Alon, N.}, \textsc{Krivelevich, M.} and \textsc{Sudakov, B.} (1998).
\newblock Finding a large hidden clique in a random graph.
\newblock In \textit{Proceedings of the ninth annual ACM-SIAM symposium on
  Discrete algorithms}. SODA '98, Society for Industrial and Applied
  Mathematics, Philadelphia, PA, USA.

\bibitem[{Angel et~al.(2016)Angel, Bubeck, Peres and Wei}]{AngBubPer16}
\textsc{Angel, O.}, \textsc{Bubeck, S.}, \textsc{Peres, Y.} and \textsc{Wei,
  F.} (2016).
\newblock Local max-cut in smoothed polynomial time.
\newblock \textit{arXiv:1610.04807} .

\bibitem[{Banerjee et~al.(2008)Banerjee, El~Ghaoui and
  d'Aspremont}]{BanEl-dAs08}
\textsc{Banerjee, O.}, \textsc{El~Ghaoui, L.} and \textsc{d'Aspremont, A.}
  (2008).
\newblock Model selection through sparse maximum likelihood estimation for
  multivariate gaussian or binary data.
\newblock \textit{J. Mach. Learn. Res.} \textbf{9} 485--516.

\bibitem[{{Banks} et~al.(2016){Banks}, {Moore}, Neeman and
  Netrapalli}]{BanMoo16}
\textsc{{Banks}, J.}, \textsc{{Moore}, C.}, \textsc{Neeman, J.} and
  \textsc{Netrapalli, P.} (2016).
\newblock {Information-theoretic thresholds for community detection in sparse
  networks}.
\newblock \textit{arXiv:1601.02658} .

\bibitem[{Besag(1986)}]{Bes86}
\textsc{Besag, J.} (1986).
\newblock On the statistical analysis of dirty pictures.
\newblock \textit{J. Roy. Statist. Soc. Ser. B} \textbf{48} 259--302.

\bibitem[{Boyd and Vandenberghe(2004)}]{BoyVan04}
\textsc{Boyd, S.} and \textsc{Vandenberghe, L.} (2004).
\newblock \textit{Convex optimization}.
\newblock Cambridge University Press, Cambridge.

\bibitem[{Bresler(2015)}]{Bre15}
\textsc{Bresler, G.} (2015).
\newblock Efficiently learning {I}sing models on arbitrary graphs [extended
  abstract].
\newblock In \textit{S{TOC}'15---{P}roceedings of the 2015 {ACM} {S}ymposium on
  {T}heory of {C}omputing}. ACM, New York, 771--782.

\bibitem[{Bresler et~al.(2008)Bresler, Mossel and Sly}]{BreMosSly08}
\textsc{Bresler, G.}, \textsc{Mossel, E.} and \textsc{Sly, A.} (2008).
\newblock Reconstruction of {M}arkov random fields from samples: some
  observations and algorithms.
\newblock In \textit{Approximation, randomization and combinatorial
  optimization}, vol. 5171 of \textit{Lecture Notes in Comput. Sci.} Springer,
  Berlin, 343--356.

\bibitem[{Diaconis et~al.(2008)Diaconis, Goel and Holmes}]{DiaGoeHol08}
\textsc{Diaconis, P.}, \textsc{Goel, S.} and \textsc{Holmes, S.} (2008).
\newblock Horseshoes in multidimensional scaling and local kernel methods.
\newblock \textit{Ann. Appl. Stat.} \textbf{2} 777--807.

\bibitem[{Dyer and Frieze(1989)}]{DyeFri89}
\textsc{Dyer, M.~E.} and \textsc{Frieze, A.~M.} (1989).
\newblock The solution of some random {NP}-hard problems in polynomial expected
  time.
\newblock \textit{J. Algorithms} \textbf{10} 451--489.

\bibitem[{Ellis(2006)}]{Ell06}
\textsc{Ellis, R.~S.} (2006).
\newblock \textit{Entropy, large deviations, and statistical mechanics}.
\newblock Classics in Mathematics, Springer-Verlag, Berlin.
\newblock Reprint of the 1985 original.

\bibitem[{Feige and Krauthgamer(2002)}]{FeiKra02}
\textsc{Feige, U.} and \textsc{Krauthgamer, R.} (2002).
\newblock A polylogarithmic approximation of the minimum bisection.
\newblock \textit{SIAM J. Comput.} \textbf{31} 1090--1118 (electronic).

\bibitem[{Friedli and Velenik(2016)}]{FriVel16}
\textsc{Friedli, S.} and \textsc{Velenik, Y.} (2016).
\newblock \textit{Statistical Mechanics of Lattice Systems: a Concrete
  Mathematical Introduction}.
\newblock Cambridge University Press.

\bibitem[{Gao et~al.(2015)Gao, Ma, Zhang and Zhou}]{GaoMaZha15}
\textsc{Gao, C.}, \textsc{Ma, Z.}, \textsc{Zhang, A.~Y.} and \textsc{Zhou,
  H.~H.} (2015).
\newblock Achieving optimal misclassification proportion in stochastic block
  model.
\newblock \textit{arXiv:1505.03772} .

\bibitem[{Gao et~al.(2016)Gao, Ma, Zhang and Zhou}]{GaoMaZha16}
\textsc{Gao, C.}, \textsc{Ma, Z.}, \textsc{Zhang, A.~Y.} and \textsc{Zhou,
  H.~H.} (2016).
\newblock Community detection in degree-corrected block models.
\newblock \textit{arXiv:1607.06993} .

\bibitem[{Garey et~al.(1976)Garey, Johnson and Stockmeyer}]{GarJohSto76}
\textsc{Garey, M.~R.}, \textsc{Johnson, D.~S.} and \textsc{Stockmeyer, L.}
  (1976).
\newblock Some simplified {NP}-complete graph problems.
\newblock \textit{Theoret. Comput. Sci.} \textbf{1} 237--267.

\bibitem[{Goemans and Williamson(1995)}]{GoeWil95}
\textsc{Goemans, M.~X.} and \textsc{Williamson, D.~P.} (1995).
\newblock Improved approximation algorithms for maximum cut and satisfiability
  problems using semidefinite programming.
\newblock \textit{J. Assoc. Comput. Mach.} \textbf{42} 1115--1145.

\bibitem[{Hajek et~al.(2016)Hajek, Wu and Xu}]{HajWuXu16}
\textsc{Hajek, B.}, \textsc{Wu, Y.} and \textsc{Xu, J.} (2016).
\newblock Achieving exact cluster recovery threshold via semidefinite
  programming.
\newblock \textit{IEEE Transactions on Information Theory} \textbf{62}
  2788--2797.

\bibitem[{Holland et~al.(1983)Holland, Laskey and Leinhardt}]{HolLasLei83}
\textsc{Holland, P.~W.}, \textsc{Laskey, K.~B.} and \textsc{Leinhardt, S.}
  (1983).
\newblock Stochastic blockmodels: First steps.
\newblock \textit{Social Networks} \textbf{5} 109 -- 137.

\bibitem[{{Ising}(1925)}]{Isi25}
\textsc{{Ising}, E.} (1925).
\newblock {Beitrag zur Theorie des Ferromagnetismus}.
\newblock \textit{Zeitschrift fur Physik} \textbf{31} 253--258.

\bibitem[{Laurent and Massart(2000)}]{LauMas00}
\textsc{Laurent, B.} and \textsc{Massart, P.} (2000).
\newblock Adaptive estimation of a quadratic functional by model selection.
\newblock \textit{Ann. Statist.} \textbf{28} 1302--1338.

\bibitem[{Laurent and Poljak(1996)}]{LauPol96}
\textsc{Laurent, M.} and \textsc{Poljak, S.} (1996).
\newblock On the facial structure of the set of correlation matrices.
\newblock \textit{SIAM Journal on Matrix Analysis and Applications} \textbf{17}
  530--547.

\bibitem[{Lauritzen(1996)}]{Lau96}
\textsc{Lauritzen, S.~L.} (1996).
\newblock \textit{Graphical models}, vol.~17 of \textit{Oxford Statistical
  Science Series}.
\newblock The Clarendon Press, Oxford University Press, New York.
\newblock Oxford Science Publications.

\bibitem[{Lauritzen and Sheehan(2003)}]{LauShe03}
\textsc{Lauritzen, S.~L.} and \textsc{Sheehan, N.~A.} (2003).
\newblock Graphical models for genetic analyses.
\newblock \textit{Statist. Sci.} \textbf{18} 489--514.

\bibitem[{Lesieur et~al.(2017)Lesieur, Krzakala and
  Zdeborov{\'a}}]{LesKrzZde17}
\textsc{Lesieur, T.}, \textsc{Krzakala, F.} and \textsc{Zdeborov{\'a}, L.}
  (2017).
\newblock Constrained low-rank matrix estimation: Phase transitions,
  approximate message passing and applications.
\newblock \textit{arXiv:1701.00858} .

\bibitem[{Manning and Sch{{\"u}}tze(1999)}]{ManSch99}
\textsc{Manning, C.~D.} and \textsc{Sch{{\"u}}tze, H.} (1999).
\newblock \textit{Foundations of statistical natural language processing}.
\newblock MIT Press, Cambridge, MA.

\bibitem[{Massouli{\'e}(2014)}]{Mas14}
\textsc{Massouli{\'e}, L.} (2014).
\newblock Community detection thresholds and the weak ramanujan property.
\newblock In \textit{Proceedings of the 46th Annual ACM Symposium on Theory of
  Computing}. ACM.

\bibitem[{McSherry(2001)}]{McS01}
\textsc{McSherry, F.} (2001).
\newblock Spectral partitioning of random graphs.
\newblock In \textit{42nd {IEEE} {S}ymposium on {F}oundations of {C}omputer
  {S}cience ({L}as {V}egas, {NV}, 2001)}. IEEE Computer Soc., Los Alamitos, CA,
  529--537.

\bibitem[{Moitra et~al.(2016)Moitra, Perry and Wein}]{MoiPerWei15}
\textsc{Moitra, A.}, \textsc{Perry, W.} and \textsc{Wein, A.~S.} (2016).
\newblock How robust are reconstruction thresholds for community detection?
\newblock \textit{Proceedings of the forty-eighth annual ACM symposium on
  Theory of Computing}  828--841.

\bibitem[{Mossel et~al.(2013)Mossel, Neeman and Sly}]{MosNeeSly13}
\textsc{Mossel, E.}, \textsc{Neeman, J.} and \textsc{Sly, A.} (2013).
\newblock A proof of the block model threshold conjecture.
\newblock \textit{arXiv:1311.4115} .

\bibitem[{Mossel et~al.(2014)Mossel, Neeman and Sly}]{MosNeeSly14}
\textsc{Mossel, E.}, \textsc{Neeman, J.} and \textsc{Sly, A.} (2014).
\newblock Belief propagation, robust reconstruction, and optimal recovery of
  block models.
\newblock \textit{Proceedings of the 27th Conference on Learning Theory (COLT)}
  .

\bibitem[{Mossel et~al.(2015)Mossel, Neeman and Sly}]{MosNeeSly15}
\textsc{Mossel, E.}, \textsc{Neeman, J.} and \textsc{Sly, A.} (2015).
\newblock Reconstruction and estimation in the planted partition model.
\newblock \textit{Probability Theory and Related Fields} \textbf{162} 431--461.

\bibitem[{Ravikumar et~al.(2010)Ravikumar, Wainwright and
  Lafferty}]{RavWaiLaf10}
\textsc{Ravikumar, P.}, \textsc{Wainwright, M.~J.} and \textsc{Lafferty, J.~D.}
  (2010).
\newblock High-dimensional {I}sing model selection using {$\ell_1$}-regularized
  logistic regression.
\newblock \textit{Ann. Statist.} \textbf{38} 1287--1319.

\bibitem[{Robbins(1955)}]{Rob55}
\textsc{Robbins, H.} (1955).
\newblock A remark on {S}tirling's formula.
\newblock \textit{Amer. Math. Monthly} \textbf{62} 26--29.

\bibitem[{Rohe et~al.(2011)Rohe, Chatterjee and Yu}]{RohChaYu11}
\textsc{Rohe, K.}, \textsc{Chatterjee, S.} and \textsc{Yu, B.} (2011).
\newblock Spectral clustering and the high-dimensional stochastic blockmodel.
\newblock \textit{Ann. Statist.} \textbf{39} 1878--1915.

\bibitem[{Sebastiani et~al.(2005)Sebastiani, Ramoni, Nolan, Baldwin and
  Steinberg}]{SebRamNol05}
\textsc{Sebastiani, P.}, \textsc{Ramoni, M.~F.}, \textsc{Nolan, V.},
  \textsc{Baldwin, C.~T.} and \textsc{Steinberg, M.~H.} (2005).
\newblock Genetic dissection and prognostic modeling of overt stroke in sickle
  cell anemia.
\newblock \textit{Nature genetics} \textbf{37} 435--440.

\bibitem[{Tropp(2015)}]{Tro15}
\textsc{Tropp, J.~A.} (2015).
\newblock \textit{An Introduction to Matrix Concentration Inequalities}.
\newblock Foundations and Trends in Machine Learning.

\bibitem[{Tsybakov(2009)}]{Tsy09}
\textsc{Tsybakov, A.~B.} (2009).
\newblock \textit{Introduction to nonparametric estimation}.
\newblock Springer Series in Statistics, Springer, New York.
\newblock Revised and extended from the 2004 French original, Translated by
  Vladimir Zaiats.

\bibitem[{Wang et~al.(2016)Wang, Berthet and Samworth}]{WanBerSam16}
\textsc{Wang, T.}, \textsc{Berthet, Q.} and \textsc{Samworth, R.~J.} (2016).
\newblock {Statistical and computational trade-offs in Estimation of Sparse
  Pincipal Components}.
\newblock \textit{Ann. Statist.} \textbf{44} 1896--1930.

\end{thebibliography}
\end{document}